\newtheorem{lemma}{Lemma}
\newtheorem{theorem}{Theorem}
\newtheorem{corollary}{Corollary}
\newtheorem{assumption}{Assumption}
\DeclareMathOperator*{\trace}{trace}
\DeclareMathOperator*{\diag}{diag}
\begin{document}
\title{Cyber Physical Attacks with Control Objectives}
\author{Yuan Chen, Soummya Kar, and Jos\'{e} M. F. Moura
\thanks{Yuan Chen {\{(412)-268-7103\}}, Soummya Kar {\{(412)-268-8962\}}, and Jos\'{e} M.F. Moura {\{(412)-268-6341, fax: (412)-268-3890\}} are with the Department of Electrical and Computer Engineering, Carnegie Mellon University, Pittsburgh, PA 15217 {\tt\small \{yuanche1, soummyak, moura\}@andrew.cmu.edu}}
\thanks{This material is based on research sponsored by DARPA under agreement number DARPA FA8750-12-2-0291. The U.S. Government is authorized to reproduce and distribute reprints for Governmental purposes notwithstanding any copyright notation thereon.}
\thanks{The views and conclusions contained herein are those of the authors and should not be interpreted as necessarily representing the official policies or endorsements, either expressed or implied, of DARPA or the U.S. Government.}}
\maketitle

\begin{abstract}
	This paper studies attackers with control objectives against cyber-physical systems (CPS). The system is equipped with its own controller and attack detector, and the goal of the attacker is to move the system to a target state while altering the system's actuator input and sensor output to avoid detection. We formulate a cost function that reflects the attacker's goals, and, using dynamic programming, we show that the optimal attack strategy reduces to a linear feedback of the attacker's state estimate. By changing the parameters of the cost function, we show how an attacker can design optimal attacks to balance the control objective and the detection avoidance objective. Finally, we provide a numerical illustration based on a remotely-controlled helicopter under attack.
\end{abstract}

\section{Introduction}\label{sect: introduction}
The study of cyber-physical security has primarily focused on the analysis of attacker capabilities and the development of resilient algorithms for attack detection, state estimation, and control. Cyber-physical systems (CPS), systems of intercommunicating sensing, computation, and actuation components coupled with an underlying physical process, have been the targets of sophisticated cyber attacks that result in catastrophic physical damage. StuxNet~\cite{Cardenas} and the Maroochy Shire Council Sewage Control Incident~\cite{CardenasOld} are two examples of attacks against industrial systems, and the authors of~\cite{CarAttack} and~\cite{CarAttack2} demonstrate similar security vulnerabilities for commercial automobiles. While the existing literature has analyzed the capability of attackers to bypass security protocols, there is little work on how an attacker should attack a CPS in order to accomplish specific goals.

In this paper, we study attackers who have specific control objectives and wish to evade security protocols employed by the CPS. We consider attackers that perform integrity attacks~\cite{Mo, TeixeiraModels}, in which the attacker alters the input on some of the actuators and the output on some of the sensors, in order to move the system to a target state while avoiding the system's attack detector. The CPS is affected by sensor and process noise, so, for state estimation and control, the system is equipped with a Kalman filter and an LQG controller. One popular method of attack detection is the $\chi^2$ fault detector: the detector examines the energy of the Kalman filter innovations process and reports an attack if the energy exceeds a threshold~\cite{Willsky}. Our work in this paper is to design an attack sequence that balances the energy of the attacker's control objective error and the impact of the attack on the CPS' attack detection statistic. %Our work in this paper determines how the attacker should design an attack sequence in order to achieve his or her control goal while counteracting the system's LQG controller and evading the $\chi^2$ detector.

We formulate a finite horizon cost function that captures the attacker's objectives. The cost function is the sum of two quadratic terms: one term penalizes the deviation of the system from the target state and the other term penalizes the energy of the detection statistic. The attacker chooses the relative weighting between the control error component and the detection statistic component of the cost function in order to match his or her own motives. Our cost function for modeling the attacker's objectives differs from the standard Linear Quadratic Gaussian (LQG) cost function. In the standard LQG problem, the cost function directly penalizes the magnitude of the input at each time step~\cite{Speyer}. In contrast, our cost function does not directly penalize the magnitude of the attack at each time step and, instead, only accounts for the effect of the attack on the detection statistic. 

This paper addresses two main problems. First, we solve for the optimal attack sequence via dynamic programming. Second, we derive expressions for the control error cost and the detection statistic energy cost of an attack sequence as a function of the weighting parameter between the two components of the cost function. The novelty of our contribution is that we precisely state how an attacker should design an attack in order to move the system to a target state while attempting to avoid detection, a problem that is not addressed by existing literature. A preliminary version of our work appears in~\cite{ChenACC}, where we studied how to attack a CPS that did not have its own controller. Additionally, in this paper, we explicitly show how an attacker can choose the relative weighting between components of the cost function to affect the performance of the optimal attack, and we provide detailed proofs that were omitted in~\cite{ChenACC}. %Additionally,~\cite{ChenACC} omitted proofs that are now provided here in detail.

Existing literature has examined the ability of an attacker to avoid detection. Reference~\cite{Liu} shows that attacks that only manipulate sensor values are undetectable if they are consistent with the system's sensor placement pattern and if the attack detector does not exploit system dynamics information. When the system is noiseless, references~\cite{Pasqualetti, TeixeiraReveal}, and~\cite{ChenICASSP} relate the ability of an attacker to avoid detection to certain geometric control-theoretic properties of the CPS. In our previous work~\cite{ChenSideInfo}, we provide a necessary and sufficient condition on the extended observability subspaces of the CPS under which an attacker can perfectly hide the effect of his or her attack from the sensor output. Prior work has also analyzed the ability of the attacker to simultaneously cause damage and avoid detection. The authors of~\cite{WeerakkodyTarget} analyze an attacker who can manipulate all sensors and actuators but does not know the entire system model. References~\cite{MoWorkshop} and~\cite{MoWSN} characterize the error in state estimation that a false data injection attacker can cause while evading a residue detector similar to the $\chi^2$ detector. The authors of~\cite{Mo} consider attackers who can attack both the actuators and the sensors of the CPS and characterize the set of reachable states for a detection-evading attacker.
%In response to cyber-physical attackers, there has been increased effort in developing stronger security protocols. The authors of~\cite{Fawzi} develop an algorithm to detect and reconstruct false data injection attacks that alter fewer than half of the system's sensors. Reference~\cite{Kosut} provides a detector for false data injection attacks in electric power grids that outperforms existing methods by leveraging the sparseness of the attacks. In~\cite{MoScada} and~\cite{MoReplay}, the authors use a watermark control input to detect when an attacker is replaying past sensor values.

Unlike previous work, which extensively analyzes the attacker's~\textit{ability} to avoid detection and cause damage, the purpose of this paper is to address~\textit{how} an attacker should go about designing an attack to achieve these objectives. We pose an optimal control formulation of the attack design problem -- the attacker's objectives define a linear quadratic cost function that penalizes deviation from the target state and the energy of the detection statistic. In contrast to the standard LQG problem, our cost function does not directly penalize the attack, so, first, we determine the effect of an attack sequence on the detection statistic. Then, through dynamic programming, we calculate the cost-minimizing attack and show that the optimal attack strategy is a linear feedback of the attacker's state estimate. We compute, separately, the control error cost and the detection statistic cost of the optimal attack strategy as a function of the weighting between the two components of the cost function. That is we analyze the trade-offs in the control performance and detection avoidance performance of the optimal attack. The primary consequence of these contributions is that an attacker can select a cost function that best reflects his or her specific goals of control and detection avoidance and design an attack to optimally accomplish those goals. 

The rest of this paper is organized as follows. In Section~\ref{sect: background}, we provide the model of the CPS and attacker, review the $\chi^2$ detector, define the cost function associated with the attacker's goals, and formally state the problems we address. Section~\ref{sect: attackBias} details the effect of an attack sequence on the system's attack detector. In Section~\ref{sect: main}, we solve for the optimal attack sequence using dynamic programming. In Section~\ref{sect: weightedCostFunction} we derive expressions for the control error cost and the detection statistic cost of an optimal attack as a function of the weighting parameter.  We give a numerical example of a remotely-controlled helicopter under attack in Section~\ref{sect: example}, and we conclude in Section~\ref{sect: conclusion}.

\section{Background}\label{sect: background}
\subsection{Notation}\label{sect: notation}
Let $\mathbb{R}$ denote the reals and $\mathbb{R}^n$ denote the space of $n$-dimensional real (column) vectors. The multivariate Gaussian distribution with mean $\mu$ and covariance $\Sigma$ is denoted as $\mathcal{N} \left( \mu, \Sigma \right)$. Let $I_n$ be the $n$ by $n$ identity matrix. For a matrix $M$, $\mathscr{R} (M)$ denotes the range space of $M$, $\mathscr{N} (M)$ denotes the null space of $M$, and $M^\dagger$ denotes the Moore-Penrose pseudoinverse. For a symmetric matrix $S = S^T$, $S \succeq 0$ denotes that $S$ is positive semidefinite, and $S \succ 0$ denotes that $S$ is positive definite.

\subsection{System Model}\label{sect: systemModel}
The CPS follows a discrete time state space model:
\begin{equation}\label{eqn: ssModel1}
\begin{split}
	x_{t+1} &= Ax_{t} + Bu_{t} + \Gamma e_t + w_t, \\
	y_t& = Cx_{t} + \Psi e_t + v_t,
\end{split}
\end{equation}
where $x_t \in \mathbb{R}^n$ is the system state, $u_t \in \mathbb{R}^m$ is the system input, $e_t \in \mathbb{R}^s$ is the attack, $w_t \in \mathbb{R}^n$ is the process noise, $y_t \in \mathbb{R}^p$ is the sensor output, and $v_t \in \mathbb{R}^p$ is the sensor noise. For nonlinear CPS,~\eqref{eqn: ssModel1} represents its linearized dynamics about a nominal operating point. The process noise $w_t$ has independent identical distribution (i.i.d.) $\mathcal{N}\left(0, \Sigma_w\right)$ with $\Sigma_w \succ 0$, the sensor noise $v_t$ has (i.i.d.) distribution $\mathcal{N}\left(0, \Sigma_v\right)$ with $\Sigma_v \succ 0$, and $w_t$ is independent of $v_t$. The system begins running at $t = -\infty$ from a random initial state $x_{-\infty}$ that has distribution $\mathcal{N}\left( \overline{x}_{-\infty}, \Sigma_x\right)$ with $\Sigma_x \succ 0$. The initial state $x_{-\infty}$ is independent of $w_t$ and $v_t$. The pair $(A, C)$ is observable, and the pair $(A, B)$ is controllable. We provide details on $\Gamma$ and $\Psi$, which model the attacker, in Section~\ref{sect: attackerModel}.

\begin{assumption}\label{ass: systemKnowledge}
	The system knows the matrices $A$, $B$, and $C$, the statistics of $x_{-\infty}$, $w_t$, $v_t$, and the input $u_t$ (for all time $t$). The system does not know the matrices $\Gamma$ and $\Psi$ and does not know the attack $e_t$. 
\end{assumption}

The system's objective is to regulate its internal state, $x_t$, to the origin (i.e., $x_t \equiv 0$). Because the system cannot directly observe $x_t$, it constructs an estimate of the state using the observations $y_t$ through a Kalman filter. Then, the system uses its state estimate to calculate its regulating control $u_t$. The system designs its Kalman filter and controller assuming nominal operating conditions (i.e., $e_t = 0$ for all $t$). When $e_t = 0$ for all $t$, the Kalman filter recursively calculates,  $\widehat{x}_{t|t}$, the minimum mean squared error estimate (MMSE) of the state, $x_t$, given all sensor measurements up to time~$t$. The Kalman filter has converged to a fixed Kalman gain since the system starts running at $t=-\infty$: 
\begin{align}\label{eqn: LimitingKalman}
K &= PC^T(CPC^T + \Sigma_v)^{-1},\\
\begin{split}
P &= APA^T + \Sigma_w - APC^T (CPC^T + \Sigma_v)^{-1}CPA^T, \label{eqn: covarianceEquation}
\end{split}\\
\widehat{x}_t & = \widehat{x}_{t|t-1} + K\left(y_{t} - C \widehat{x}_{t|t-1}\right), \label{eqn: filterEquation}\\
\widehat{x}_{t+1|t} &=A\widehat{x}_t + Bu_t. \label{eqn: kalmanPrediction}
\end{align}
The system has a feedback controller
\begin{equation}\label{eqn: sysController}
	u_t = L \widehat{x}_t,
\end{equation}
such that $A+BL$ is stable. Under nominal operating conditions, one such controller is the infinite horizon LQG controller that computes $u_t$ to minimize the cost function
\begin{equation}\label{eqn: sysLQGCost}
	J_{\text{CPS}} = \lim_{T \rightarrow \infty} \frac{1}{2T+1} \mathbb{E} \left[\sum_{t = -T}^{T} x_t^T Q^' x_t + u_t^T R^' u_t\right],
\end{equation}
where $Q^' \succeq 0$, $R^' \succ 0$, and the pair $(A, Q^')$ is observable. For the CPS, the term $x_t^T Q^' x_t$ penalizes the deviation of the state from the origin (i.e., the regulation error), and the term $u_t^T R^' u_t$ represents the cost of using the control $u_t$. The infinite horizon LQG controller aims to minimize the average sum of these two cost terms. The controller that minimizes~\eqref{eqn: sysLQGCost} has the structure of~\eqref{eqn: sysController}, where
\begin{equation}\label{eqn: sysLQG}
	 L = -\left(B^T S B + R^'\right)^{-1}B^T SA,
\end{equation}
and $S$ satisfies
\begin{equation}\label{eqn: sysLQGRiccatti}
\begin{split}
	S &= A^T S A + Q^' -  A^T S B\left(B^T S B + R^'\right)^{-1}B^T SA.
\end{split}
\end{equation}
%The system has an infinite-horizon LQG controller that computes $u_t$ to minimize the cost function
%\begin{equation}
%	J_{\text{CPS}} = \lim_{T \rightarrow \infty} \frac{1}{2T+1} \mathbb{E} \left[\sum_{t = -T}^{T} x_t^T Q^' x_t + u_t^T R^' u_t\right],
%\end{equation}
%where $Q^'$ and $R^'$ are positive semidefinite matrices. The optimal input $u_t$ computed by the LQG controller is
%\begin{equation}\label{eqn: sysLQG}
%	u_t = L\widehat{x}_t, \quad L = -\left(B^T S B + R^'\right)^{-1}B^T SA,
%\end{equation}
%where $S$ satisfies
%\begin{equation}\label{eqn: sysLQGRiccatti}
%\begin{split}
%	S &= A^T S A + Q^' -  A^T S B\left(B^T S B + R^'\right)^{-1}B^T SA.
%\end{split}
%\end{equation}
The CPS represented by~\eqref{eqn: ssModel1} equipped with a Kalman Filter (equations~\eqref{eqn:  LimitingKalman}-\eqref{eqn: kalmanPrediction}) and (stable) feedback controller is a standard model of a CPS~\cite{MoScada, Weerakkody, MoAuthentication}.
%\subsection{Attack Detection}\label{sect: attackDetection}

Although its Kalman filter and feedback controller are designed assuming nominal operating conditions, the system is equipped with a $\chi^2$ failure detector for detecting operating conditions that are not nominal (i.e., $e_t \neq 0$)~\cite {Willsky, MoScada, MoReplay}. The $\chi^2$ detector examines the innovations sequence of the Kalman filter, $\nu_t$, defined as
\begin{equation}\label{eqn: nuDef}
	\nu_t = y_t - C\widehat{x}_{t|t-1},
\end{equation}
where $\widehat{x}_{t|t-1}$, given by equation~\eqref{eqn: kalmanPrediction}, is, under the condition that $e_t = 0$ for all $t$, the MMSE prediction of $x_t$ using sensor measurements up to $y_{t-1}$. When there is no attack, it is well known~\cite{Speyer} that the innovations are i.i.d. with  $\mathcal{N}\left(0, \Sigma_\nu\right)$, where
\begin{equation}\label{eqn: sigmaNu}
	\Sigma_\nu = CPC^T + \Sigma_v,
\end{equation}
and $\nu_t$ is orthogonal to $\widehat{x}_{t|t-1}$. The $\chi^2$ detector computes the statistic
\begin{equation}\label{eqn: residueNorm}
	g_t = \nu_t^T \Sigma_\nu^{-1} \nu_t,
\end{equation}
and reports that an attack has occurred if $g_t$ exceeds a threshold $\tau$. The threshold $\tau$ is chosen \`{a} priori based on the desired false alarm and missed detection probabilities~\cite{Willsky}. 
\subsection{Attacker Model}\label{sect: attackerModel}
We provide assumptions and details on the attacker model.
\begin{assumption}\label{ass: attackBeginTime}
	The attack begins at time $t = 0$. That is, $e_t \equiv 0$ for all $t < 0$.
\end{assumption}
\noindent Assumption~\ref{ass: attackBeginTime} is made without loss of generality since we can arbitrarily set the indexing of time steps. The attacker can attack the system (i.e., $e_t \neq  0$) over a fixed time window from $t = 0$ to $t = N$. The attacker knows the value of $N$. 
\begin{assumption}\label{ass: attackerSensorKnowledge}
	The attacker causally knows $y_t$ for all times $t$. For time $t=0$ to $t=N$, the attacker also causally knows $\widetilde{y}_t$, where
	\begin{equation}
		\widetilde{y}_t = Cx_t + v_t.
	\end{equation}
\end{assumption}
\noindent Assumption~\ref{ass: attackerSensorKnowledge} states that, at time $t$, the attacker has access to the system sensor measurements for all times up to $t$. 
%From a practical perspective, we do not mean that the attacker needs to know the sensor measurements from the time at which the system begins running ($t_0 = -\infty$). Rather, this assumption states that, when the attack begins. i.e., at time $t = 0$, the attacker has observed system sensor measurements for a long time. 
The attacker also knows the sensor measurement at time $t$ before it is modified by his or her attack at time $t$. The attacker may use its knowledge of $\widetilde{y}_t$ to design his or her attack, $e_t$. 
\begin{assumption}\label{ass: attackerSystemKnowledge}
	The attacker knows the matrices $A$, $B$, $C$ and $L$ and the statistics of $x_{-\infty}$, $w_t$, $v_t$. The attacker knows the matrices $\Gamma$ and $\Psi$.
\end{assumption}
\noindent The attacker knows the system model and the system's feedback matrix $L$. The attacker also knows which sensors and actuators he or she can attack and remembers the history of previous attacks. 
\begin{assumption}\label{ass: GammaPsiInjective}
	The matrix $\left[\begin{array}{c} \Gamma \\ \Psi \end{array}\right]$ is injective.
\end{assumption}
\noindent We make assumption~\ref{ass: GammaPsiInjective} without loss of generality. If the matrix $\left[\begin{array}{c} \Gamma \\ \Psi \end{array}\right]$ is not injective, then we can remove redundant columns from it to form an injective matrix without changing the capabilities of the attacker.

Using knowledge of the system model, sensor measurements $y_t$, and the attacks $e_t$, the attacker performs Kalman filtering to calculate $\widehat{x}_t$, the MMSE estimate of the system state given all the measurements up to time $t$. The attacker's Kalman filter is separate from the system's Kalman filter, but from time $t= - \infty$ to $t = -1$, the two Kalman filters produce the same estimate since $e_t = 0$ in that time interval. Given his or her knowledge, the attacker also calculates the state estimate produced by the \textit{system's} Kalman filter. Since, by assumption~\ref{ass: attackerSystemKnowledge}, the attacker knows the feedback matrix $L$, having knowledge of the system's state estimate means that he or she knows the input $u_t$. Our assumptions state that the attacker knows the system model, sensor outputs, and system LQG control inputs exactly. From the perspective of the system, this represents a worst-case (i.e., most powerful) attacker. The case in which the attacker is not as powerful (e.g., the attacker does not know the system model exactly, the attacker does not know the sensor outputs exactly, etc.) is the subject of future work.

The objective of the attacker is to move the system to a target state $x^*$ while evading the $\chi^2$ detector over a finite time window from $t = 0$ to $t = N$.
%\begin{assumption}
%	The attacker knows $x^*$.
%\end{assumption}
\noindent The attacker chooses an attack sequence $e_0, \dots, e_N$ to accomplish his or her goals, and the attack at time $t$, $e_t$, can only depend on the attacker's available information at time $t$. Let $\mathcal{I}_t$ denote the attacker's information set at time $t$. From the assumptions about the attacker's knowledge, we see that $\mathcal{I}_t$ follows the classical information pattern\footnote{The information set $\mathcal{I}_{t}$ is the information of the attacker that is $\textit{in addition}$ to the knowledge he or she has about the system model. For example, even though it is not explicitly stated in the information pattern $\mathcal{I}_t$, at time $t=0$, the attacker knows $\widehat{x}_{t|t-1}$ because of his or her Kalman filter.}~\cite{Speyer}:
\begin{equation}\label{eqn: attackerInformation}
\begin{split}
	\mathcal{I}_0 = \left\{\tilde{y}_0\right\}, \quad \mathcal{I}_{t+1} = \left\{\mathcal{I}_{t}, \tilde{y}_{t+1}, e_{t}\right\}.
\end{split}
\end{equation}
In the time window from $t=0$ to $t=N$, if an attack occurs, i.e., if $e_t \neq 0$ for some $t$, the attacker's Kalman filter produces a different estimate than the system's Kalman filter. This is because, at time $t$, the attacker knows $e_0, \dots, e_t$ and $\widetilde{y}_0, \dots, \widetilde{y}_t$ while the system only knows $y_0, \dots, y_t$. The attacker's Kalman filter becomes
\begin{align}\label{eqn: attackerPredictor}
	\widehat{x}_{t+1|t} &= A\widehat{x}_t + Bu_t + \Gamma e_t, \\
	\widehat{x}_t &= \widehat{x}_{t|t-1} + K\left(\widetilde{y}_t - C\widehat{x}_{t|t-1}\right). \label{eqn: attackerFilter}
\end{align}
The term $\widehat{x}_{t+1|t}$ (equation~\eqref{eqn: attackerPredictor}) is the MMSE estimate of $x_{t+1}$ given information set $\mathcal{I}_t$ and attack $e_t$, and the term $\widehat{x}_t$ (equation~\eqref{eqn: attackerFilter}) is the MMSE estimate of $x_t$ given information set $\mathcal{I}_t$. That is
\begin{align}
	\widehat{x}_{t+1|t} &= \mathbb{E} \left[ x_{t+1} \vert \left\{ \mathcal{I}_t, e_t \right\} \right], \label{eqn: attackerPredictorClarify}\\
	\widehat{x}_t &= \mathbb{E} \left[ x_t \vert \mathcal{I}_t \right]. \label{eqn: attackerFilterClarify}
\end{align}
%Equations~\eqref{eqn: attackerPredictor} and~\eqref{eqn: attackerFilter} give the true MMSE estimate of the system state when the system is under attack (from $t=0$ to $t=N$).

The attack sequence $e_0, \dots, e_N$ changes the state estimate and the intermediate quantities calculated by the system's Kalman filter (e.g., the innovation $\nu_t$). Let the superscript $^e$ denote the variables associated with the system's Kalman filter under attack, and let the superscript $^0$ denote the variables in the situation that there is no attack (i.e., $e_t = 0$ for $t=0$ to $t=N$). For example, $\nu_t^0$ is the innovation at time $t$ when there is no attack up to time $t$ ($e_0 = e_1 = \dots = e_t = 0$). When the system is under attack, the system's Kalman filter, controller, and $\chi^2$ detector only have access to the variables affected by the attack, i.e., the variables with superscript $^e$. The $\chi^2$ detector calculates the statistic
%\begin{equation}
	$g_t^e = {\nu_t^e}^T \Sigma_\nu^{-1} \nu_t^e$
%\end{equation}
and compares $g_t^e$ against a threshold to determine whether or not an attack has occurred. %To accomplish his or her goal of evading detection, the attacker must account for the effect of the attack $e_t$ on the innovation $\nu_t^e$. 

To capture the attacker's objectives, define the following quadratic cost function:
\begin{equation}\label{eqn: AttackerLQG1}
\begin{split}
	J &= \mathbb{E} \Big[\sum_{t=0}^N \left(x_t-x^*\right)^T Q_t \left(x_t-x^*\right) +  {\nu_t^e}^TR_t\nu_t^e \Big],
\end{split}
\end{equation}
where $Q_t$ and $R_t$ are positive definite symmetric matrices. The cost function $J$ represents the objectives of the attacker -- the term $\left(x_t-x^*\right)^T Q_t \left(x_t-x^*\right)$ penalizes deviation from the target state $x^*$ and the term ${\nu_t^e}^TR_t\nu_t^e $ penalizes having a large value in the detection statistic. The goal of the attacker is to design an attack sequence $e_0, \dots, e_N$ that minimizes $J$. Unlike standard LQG cost functions (e.g.,~\eqref{eqn: sysLQG}), which directly penalize the use of control, the cost function~\eqref{eqn: AttackerLQG1} does not directly penalize the attack $e_t$. Instead, $J$ only penalizes the attack indirectly through the detection statistic component  ${\nu_t^e}^TR_t\nu_t^e$. Thus, part of designing an optimal attack sequence is to determine how an attack sequence affects the CPS' Kalman filter, LQG controller, and $\chi^2$ detector.

The attacker chooses $Q_t$ and $R_t$ to match his or her own objectives. Instead of directly studying the effect of $Q_t$ and $R_t$ on the attack performance, we consider the modified cost function
\begin{equation}\label{eqn: modifiedJ}
	J_\alpha = \mathbb{E}\left[ \sum_{t = 0}^N \alpha \left(x_t - x^*\right)^T Q_t \left(x_t - x^*\right) +  {\nu_t^e}^T{R}_t \nu_t^e\right],
\end{equation}
where the attacker chooses $\alpha > 0$ as a weighting parameter. An attacker who is more concerned with achieving a low control error chooses a high value of $\alpha$, and an attacker who is more concerned with achieving a low detection statistic cost chooses a low value of $\alpha$. Define the detection statistic cost, $J_d$, and the normalized control error cost, $J_c$, as
\begin{align}
	J_{d} &= \mathbb{E}\left[ \sum_{t=0}^N \nu_t^e R_t \nu_t^e \right],\label{eqn: JDDef}\\
	J_c &= \mathbb{E}\left[\sum_{t=0}^N \left(x_t - x^*\right)^T Q_t \left( x_t - x^* \right) \right]. \label{eqn: JCDef}
\end{align}
We are interested in how the attacker's choice of $\alpha$ affects $J_d$ and $J_c$. 

\subsection{Problem Statement}\label{sect: problemStatement}
Consider the system~\eqref{eqn: ssModel1}. First, we determine the optimal cost
\begin{equation}
	J^* = \inf_{e_0, \dots, e_{N}} J,
\end{equation}
and find a sequence of attacks $e_0, \dots, e_{N}$ that achieves $J^*$. Second, we consider the modified cost function $J_\alpha$ and derive expressions for the detection statistic cost, $J_d$, and the control error cost, $J_c$, associated with the sequence of attacks $e_0, \dots, e_{N}$ that minimizes $J_\alpha$. 

\section{Attack Effect}\label{sect: attackBias}
Before finding an attack sequence that minimizes $J$, we determine how an attack sequence affects the cyber-physical system. Specifically, we calculate the effect of an attack sequence on the system's Kalman filter, its LQG controller, and the $\chi^2$ attack detector. %Consider the system over the time window $0, 1, \dots, N$, and define the following variables: 
Let $\epsilon_t$ be the bias in the innovation, $\nu_t^e$, induced by the attack sequence $e_0, \dots, e_t$, and let $\omega_{t+1}$ be the bias in the system's state estimate, $\widehat{x}_t^e$, induced by the attack sequence $e_0, \dots, e_t$. That is, by definition, we have
\begin{align}
	\nu_t^e &= \nu_t^0 + \epsilon_t, \label{eqn: innovationBias}\\
	\widehat{x}_t^e &= \widehat{x}_t^0 + \omega_{t+1}. \label{eqn: estimationBias}
\end{align}

%We determine how an attack sequence affects the sensor output at time $t$. When there is no attack, from system~\eqref{eqn: ssModel1} and equation~\eqref{eqn: sysLQG}, we have
%\begin{equation}\label{eqn: sysNoAttackOutput}
%\begin{split}
%	y^0_t = CA^t x_0 + v_t + \sum_{j = 0}^{t-1} CA^{t-1-j} \left(BL\widehat{x}_j^0 + w_j\right).
%\end{split}
%\end{equation}
When the system is under attack, the system's input is 
\begin{equation}\label{eqn: sysAttackedLQG}
	u_t^e = L\widehat{x}_t^e = L\left(\widehat{x}_t^0 + \omega_{t+1}\right).
\end{equation}
From algebraic manipulation on the output of system~\eqref{eqn: ssModel1} and equation~\eqref{eqn: sysAttackedLQG}, the sensor measurement at time $t$ under attack~is
\begin{equation}\label{eqn: sysAttackedOutput}
	y_t^e = y_t^0 + \gamma_t + \beta_t,
\end{equation}
where
	$\gamma_t = \sum_{j = 0}^{t-1} CA^{t-1-j}BL\omega_{j+1},$ and %\label{eqn: alphaDef} \\
	$\beta_t = \Psi e_t +  \sum_{j=0}^{t-1} CA^{t-1-j}\Gamma e_j.$
The term $\beta_t$ describes how the attack sequence $e_0, \dots, e_t$ directly influences the sensor measurement. The term $\gamma_t$ describes how the attack sequence $e_0, \dots, e_t$ indirectly influences the sensor measurement through the system's controller input -- the attack induces a bias in the system's state estimate, and the feedback control causes this bias to appear in the sensor measurement.

By inspection, we can treat $\gamma_t$ and $\beta_t$ as the outputs of state-space dynamical systems. Defining the virtual state variable $\pi_t \in \mathbb{R}^n$, we see that $\gamma_t$ is the output to the dynamical system
\begin{equation}
	\mathcal{P}_1: \left\{ \begin{array}{rl} \pi_{t+1}\!\!\!\! &= A\pi_t + ABL\omega_t, \\ \gamma_t\!\!\!\! &= C\pi_t + CBL \omega_t, \end{array} \right.
\end{equation}
with $\pi_0 = 0$. %The dynamical system $\mathcal{P}_1$ takes $\omega_t$ as input and produces $\gamma_t$ as output. 
Defining the virtual state variable $\rho_t \in \mathbb{R}^n$, we see that $\beta_t$ is the output to the dynamical system
\begin{equation}
	\mathcal{P}_2: \left\{ \begin{array}{rl} \rho_{t+1}\!\!\!\! &= A\rho_t + \Gamma e_t, \\ \beta_t\!\!\!\! &= C\rho_t + \Psi e_t, \end{array} \right.
\end{equation}
with $\rho_0 = 0$. %The dynamical system $\mathcal{P}_2$ takes $e_t$ as input and produces $\beta_t$ as output.
Substituting equations~\eqref{eqn: sysController}, ~\eqref{eqn: estimationBias}, and ~\eqref{eqn: sysAttackedOutput} into equations~\eqref{eqn: filterEquation} and~\eqref{eqn: kalmanPrediction}, we have, after algebraic manipulation,
\begin{align}
%	\widehat{x}_t^e &= (A+BL)\widehat{x}_{t-1}^e + K\left( y_t^e - C(A+BL)\widehat{x}_{t-1}^e\right), \\
%	\begin{split}
%		& = (A+BL)\left(\widehat{x}_{t-1}^0 + \omega_t\right) + K\Big(y_t^0 + \gamma_t + \beta_t - \\
%		&\qquad C(A+BL) \left(x_{t-1}^0 + \omega_t\right)\Big),
%	\end{split} \\
	\widehat{x}_t^e & = \widehat{x}_t^0 + (I_n-KC)(A+BL)\omega_t + K\left(\beta_t + \gamma_t\right). \label{eqn: attackedKalmanFilter}
\end{align}
Combining equation~\eqref{eqn: estimationBias} and~\eqref{eqn: attackedKalmanFilter}, we see that $\omega_t$ is the output to the dynamical system
\begin{equation}
	\mathcal{P}_3: \left\{\!\!\!\begin{array}{rl} \omega_{t+1}\!\!\!\!\!\! &= (I_n\!\!-\!\!KC)(A+BL)\omega_t + K\left(\gamma_t + \beta_t\right), \\ \omega_t\!\!\!\!\!\! &= \omega_t. \end{array} \right.\!\!
\end{equation}
An attack sequence $e_0, \dots, e_t$ does not affect the system's Kalman filter estimate $\widehat{x}_{-1}$, which means that $\omega_0 = 0$. %The dynamical system $\mathcal{P}_3$ takes the sum of $\gamma_t$ and $\beta_t$ as input and produces $\omega_t$ as output.

Combining equations~\eqref{eqn: nuDef},~\eqref{eqn: estimationBias},~\eqref{eqn: sysAttackedOutput}, and the definition of $\epsilon_t$, we have
%\begin{align}
%	\nu_t^e &= y_t^0 + \gamma_t + \beta_t - C(A+BL)\left(\widehat{x}^0_{t-1}+\omega_t\right), \\
%	& = \nu_t^0 -C(A+BL)\omega_t + \gamma_t + \beta_t.
%\end{align}
%By the definition of $\epsilon_t$, we then have
\begin{equation}\label{eqn: epsilonSystemOutput}
	\epsilon_t = -C(A+BL)\omega_t + \gamma_t + \beta_t.
\end{equation}
Equation~\eqref{eqn: epsilonSystemOutput} along with $\mathcal{P}_1$, $\mathcal{P}_2$, and $\mathcal{P}_3$ describe how the attack sequence $e_0, \dots, e_t$ affects the innovation $\nu_t^e$ and the $\chi^2$ attack detector. Figure~\ref{fig: systemInterconnectDiagram} shows the block diagram of the system relating the attack $e_t$ to the innovation bias $\epsilon_t$. 
The innovation bias $\epsilon_t$ is the output of a dynamical system that takes $e_t$ as the input and consists of the interconnection between $\mathcal{P}_1$, $\mathcal{P}_2$, and $\mathcal{P}_3$. %We find the state-space description of this system by combining the state-space descriptions of $\mathcal{P}_1$, $\mathcal{P}_2$ and $\mathcal{P}_3$. 
Define the virtual state variable
$\theta_t = \left[\begin{array}{ccc} \omega_t^T & \pi_t^T & \rho_t^T \end{array}\right]^T.$
Then, the state-space representation
\begin{equation}\label{eqn: eEpsilonSystem}
\begin{split}
	\theta_{t+1}  &= \widehat{\mathcal{A}} \theta_t + \widehat{\mathcal{B}} e_t, \\
	\epsilon_t &= \widehat{\mathcal{C}} \theta_t + \widehat{\mathcal{D}} e_t,
\end{split}
\end{equation}
where
\begin{equation}
\begin{split}
	\widehat{\mathcal{A}} &= \left[\begin{array}{ccc} (I_n-KC)A + BL & KC & KC \\ ABL & A & 0 \\ 0 & 0 & A \end{array}\right], \\
	\widehat{\mathcal{B}} &= \left[\begin{array}{ccc} \left(K\Psi\right)^T & 0^T & \Gamma^T \end{array}\right]^T, \\
	\widehat{\mathcal{C}} &= \left[\begin{array}{ccc} -CA & C & C \end{array}\right], \\
	\widehat{\mathcal{D}} &= \Psi,
\end{split}
\end{equation}
with $\theta_0 = 0$ describes the relationship between $e_t$ and $\epsilon_t$. That is the system in equation~\eqref{eqn: eEpsilonSystem} describes how an attack sequence $e_0, \dots, e_t$ affects the $\chi^2$ detector. In addition, since the attacker knows the state at time $t=0$, $\theta_0 = 0$, and the attack $e_t$ for all $t$, he or she can determine the value of $\theta_t$ for all $t$. In particular, the attacker knows the estimate bias $\omega_t$. Thus, system~\eqref{eqn: eEpsilonSystem} also describes how an attack sequence $e_0, \dots, e_t$ affects the system's Kalman filter and the system's LQG controller. 
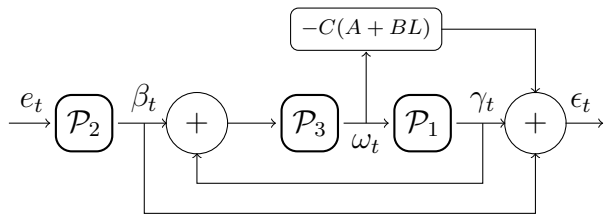
\begin{figure}[h!]
\centering
\begin{tikzpicture}
	\draw [->] (.5, 0) -- (1.05, 0);
	\node [above right] at (.5, 0){$e_t$};
	\node [rectangle, rounded corners = 6pt, draw = black, thick, minimum width = .75cm, minimum height = .75 cm] at (1.5, 0){$\mathcal{P}_2$};
	\node [circle, draw = black, thin, minimum width = .5cm] at (3, 0){$+$};
	\draw [->] (1.95, 0) -- (2.6, 0);
	\node [above] at (2.3, 0){$\beta_t$};
	\draw [->] (3.4, 0) -- (4.05, 0);
	\node [rectangle, rounded corners = 6pt, draw = black, thick, minimum width = .75cm, minimum height = .75 cm] at (4.5, 0){$\mathcal{P}_3$};
	\draw [->] (4.95, 0) -- (5.55, 0);
	\node [below] at (5.25, 0) {$\omega_t$};
	\node [rectangle, rounded corners = 6pt, draw = black, thick, minimum width = .75cm, minimum height = .75 cm] at (6, 0){$\mathcal{P}_1$};
	\draw [->] (6.45, 0) -- (7.1, 0);
	\node [above] at (6.8, 0){$\gamma_t$};
	\node [circle, draw = black, thin, minimum width = .5cm] at (7.5, 0){$+$};
	\draw [->] (7.9, 0) -- (8.4, 0);
	\node [above left] at (8.4, 0){$\epsilon_t$};
	\node [rectangle, rounded corners = 3pt, draw = black, thin, minimum height = .5cm] at (5.25, 1.25){\scriptsize$-C(A+BL)$};
	\draw [->] (5.25, 0) -- (5.25, .95);
	\draw [->] (6.25, 1.25) -- (7.5, 1.25) -- (7.5, .4);
	\draw [->] (6.8, 0) -- (6.8, -.8) -- (3, -.8) -- (3, -.4);
	\draw [->] (2.3, 0) -- (2.3, -1.2) -- (7.5, -1.2) -- (7.5, -.4);
\end{tikzpicture}
\caption{The interconnection of $\mathcal{P}_1$, $\mathcal{P}_2$, and $\mathcal{P}_3$ describes the effect of an attack on the $\chi^2$ detection statistic.}
\label{fig: systemInterconnectDiagram}
\end{figure}

\section{Optimal Attack Strategy}\label{sect: main}
In this section, we prove a sufficient condition on the matrices $Q_t$ and $R_t$ for the existence of a minimum to the cost function~$J$ and the uniqueness of the minimizing attack sequence in the time interval $t = 0, \dots, t = N-1$ (the attack $e_N$ may not be unique), and we find a cost-minimizing attack sequence $e_0, \dots, e_N$. %We show that the optimal attack sequence involves both the attacker's Kalman filter and the system's Kalman filter. 
Unless otherwise noted, let variables appearing without a superscript denote quantities from the attacker's Kalman filter, and let variables appearing with superscript $^e$ or $^0$ denote quantities from the system's Kalman filter. The term $\widehat{x}_t$ refers to the attacker's MMSE state estimate, defined in equation~\eqref{eqn: attackerFilterClarify} as $\widehat{x}_t  = \mathbb{E} \left[ x_t \vert \mathcal{I}_t \right]$, which is computed by the attacker's Kalman filter following equation~\eqref{eqn: attackerPredictor} and~\eqref{eqn: attackerFilter}. The term $\nu_t$ denotes the attacker's innovation at time $t$, 
\begin{equation}\label{eqn: attackerInnovationClarify}
	\nu_t = \widetilde{y}_t - C \widehat{x}_{t| t-1},
\end{equation}
where, following~\eqref{eqn: attackerPredictorClarify}, $\widehat{x}_{t|t-1} = \mathbb{E} \left[ x_{t} \vert \left\{ \mathcal{I}_{t-1}, e_{t-1} \right\} \right]$. 
%For example, $\widehat{x}_t$ is the attacker's MMSE state estimate at time $t$. Let variables appearing with superscript $^e$ or $^0$ denote quantities from the system's Kalman filter. For example, $\widehat{x}_t^0$ is the system's MMSE state estimate in the situation that $e_0 = \dots = e_t = 0$.
%For example $\widehat{x}_t$ is the attacker's state estimate at time $t$ , and $\widehat{x}_t^e$ is the system's state estimate at time $t$ under attack $e_0, \dots, e_t$. 
\subsection{Optimal Solution}\label{sect: solution}
We find an attack sequence $e_0, \dots, e_N$ that minimizes the cost function $J$. Define
\begin{equation}
	\xi_t^T = \left[ \begin{array}{cccc} \widehat{x}_t^T & \theta_t^T & \left(\widehat{x}_t^0\right)^T & {x^*}^T \end{array} \right],\: \zeta_t = \left[\begin{array}{c} \widehat{x}_t - x^* \\ \epsilon_t + \nu_t^0 \end{array}\right].
\end{equation}
Further define
\begin{align}
	\begin{split}
		\mathcal{A} &= \left[\begin{array}{cccc} A & BL\Omega & 0 & 0 \\ 0 & \widehat{\mathcal{A}} &0 & 0 \\ 0 & 0 & A+BL & 0 \\  0 & 0 & 0 & I_n \end{array}\right], \quad
		\mathcal{B} = \left[\begin{array}{c} \Gamma + BLK\Psi \\ \widehat{\mathcal{B}} \\ 0 \\0 \end{array}\right], \quad \mathcal{K} = \left[\begin{array}{c} K \\ 0 \\ K \\ 0 \end{array}\right], \\
		\mathcal{C} &= \left[\begin{array}{c}\mathcal{H} \\ \widetilde{\mathcal{C}} \end{array}\right], \quad \mathcal{D} = \left[\begin{array}{c} 0 \\ \Psi\end{array}\right], \quad \mathcal{M} = \left[\begin{array}{c} 0 \\ I_p \end{array} \right], \quad \\
		\mathcal{H} &= \left[\begin{array}{cccc} I_n & 0 & 0 & -I_n \end{array}\right],\quad \widetilde{\mathcal{C}} = \left[\begin{array}{cccc} 0 & \widehat{\mathcal{C}} & 0 & 0 \end{array}\right],
	\end{split}\\
	\Omega &= \left[\begin{array}{ccc} (I_n-KC)A + BL & KC & KC \end{array}\right], \\
	\mathcal{F}_t &= \left[\begin{array}{cc} Q_t & 0 \\ 0 & R_t \end{array}\right].\label{eqn: FDef}
\end{align}

\begin{theorem}[Optimal Attack Strategy]\label{thm: optimalAttackSolution}
An attack sequence $e_0, \dots, e_N$ that minimizes $J$ is
\begin{align}
\begin{split}\label{eqn: LQGSolutionTerminal}
	e_N =& -\left(\mathcal{D}^T \mathcal{F}_N \mathcal{D}\right)^\dagger\left(\mathcal{D}^T \mathcal{F}_N\left(\mathcal{C} \xi_N + \mathcal{M}\nu_N\right)\right),
\end{split} \\
\begin{split}\label{eqn: LQGSolution}
	e_t =& -\left(\mathcal{D}^T \mathcal{F}_t \mathcal{D} + \mathcal{B}^T\mathcal{Q}_{t+1}\mathcal{B}\right)^{-1}\left(\left(\mathcal{D}^T \mathcal{F}_t \mathcal{C}+  \mathcal{B}^T\mathcal{Q}_{t+1}\mathcal{A}\right)\xi_t+ \mathcal{D}^T\mathcal{F}_t \mathcal{M} \nu_t\right),
\end{split}
\end{align}
for $t = 0, \dots, N-1$.
The matrix $\mathcal{Q}_t$ is given recursively backward in time by
\begin{align}
\begin{split}\label{eqn: qtDef}
	\mathcal{Q}_t =& \mathcal{C}^T\mathcal{F}_t\mathcal{C} + \mathcal{A}^T\mathcal{Q}_{t+1}\mathcal{A} - \left(\mathcal{D}^T \mathcal{F}_t \mathcal{C} + \mathcal{B}^T\mathcal{Q}_{t+1}\mathcal{A}\right)^T\left(\mathcal{D}^T \mathcal{F}_t \mathcal{D} + \mathcal{B}^T\mathcal{Q}_{t+1}\mathcal{B}\right)^{-1}\times \\
	&  \left(\mathcal{D}^T \mathcal{F}_t \mathcal{C} + \mathcal{B}^T\mathcal{Q}_{t+1}\mathcal{A}\right),
\end{split}
\end{align}
with terminal condition
\begin{align}
\begin{split}\label{eqn: QRTerminalConditions}
	\mathcal{Q}_N & = \mathcal{C}^T \mathcal{F}_N \mathcal{C} - \mathcal{C}^T \mathcal{F}_N \mathcal{D} \left(\mathcal{D}^T\mathcal{F}_N\mathcal{D}\right)^\dagger \mathcal{D}^T\mathcal{F}_N\mathcal{C}.
\end{split}
\end{align}
\end{theorem}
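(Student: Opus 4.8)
The plan is to recast the attacker's problem as a standard finite-horizon linear-quadratic optimal control problem in the augmented state $\xi_t$, driven by the control $e_t$ and by the innovation $\nu_t$ acting as process noise, and then to solve it by backward dynamic programming. First I would remove the unmeasurable part of the state from the cost. Writing $x_t = \widehat{x}_t + (x_t - \widehat{x}_t)$ and using that the attacker's estimation error $x_t - \widehat{x}_t$ is zero-mean and orthogonal to $\mathcal{I}_t$ (hence to $\widehat{x}_t$ and to $x^*$), the control-error term splits as $\mathbb{E}[(x_t-x^*)^TQ_t(x_t-x^*)] = \mathbb{E}[(\widehat{x}_t-x^*)^TQ_t(\widehat{x}_t-x^*)] + \mathbb{E}[(x_t-\widehat{x}_t)^TQ_t(x_t-\widehat{x}_t)]$, and the second term is the fixed Kalman error covariance, independent of the attack. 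Thus, up to an additive attack-independent constant, minimizing $J$ is equivalent to minimizing $\mathbb{E}[\sum_{t=0}^N \zeta_t^T\mathcal{F}_t\zeta_t]$ with $\mathcal{F}_t = \diag(Q_t,R_t)$ as in~\eqref{eqn: FDef}.

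Second, I would assemble the augmented model $\xi_{t+1} = \mathcal{A}\xi_t + \mathcal{B}e_t + \mathcal{K}\nu_{t+1}$ and $\zeta_t = \mathcal{C}\xi_t + \mathcal{D}e_t + \mathcal{M}\nu_t$ by stacking the attack-effect dynamics~\eqref{eqn: eEpsilonSystem} (the $\theta_t$-block), the no-attack filter recursion $\widehat{x}_{t+1}^0 = (A+BL)\widehat{x}_t^0 + K\nu_{t+1}^0$, the attacker's filter~\eqref{eqn: attackerPredictor}--\eqref{eqn: attackerFilter}, and the constant target $x^*$; this is direct but lengthy bookkeeping. The one non-obvious identity, which I would establish first, is that the attacker's innovation equals the no-attack innovation, $\nu_t = \nu_t^0$. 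Because the attacker's filter runs on the true measurements $\widetilde{y}_t$ with the correct dynamics (including the known $\Gamma e_t$), its estimation error $\widetilde{x}_t = x_t - \widehat{x}_{t|t-1}$ obeys the attack-free recursion $\widetilde{x}_{t+1} = A(I_n-KC)\widetilde{x}_t - AKv_t + w_t$, so $\nu_t = C\widetilde{x}_t + v_t$ is identical to its no-attack counterpart. This identity lets the $\nu_t^0$ appearing in $\zeta_t$ be written as the measurable innovation $\nu_t$ entering through $\mathcal{M}$, and it makes each $\nu_{t+1}$ a zero-mean white input independent of $\mathcal{I}_t$.

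Third, I would run backward induction on the conditional cost-to-go $V_t = \min \mathbb{E}\big[\sum_{\tau=t}^N \zeta_\tau^T\mathcal{F}_\tau\zeta_\tau \,\big|\, \mathcal{I}_t\big]$, with the ansatz that $V_t$ is a quadratic in $(\xi_t,\nu_t)$ whose $\xi_t^T(\cdot)\xi_t$-coefficient is $\mathcal{Q}_t$. Since $\xi_t$ and $\nu_t$ are $\mathcal{I}_t$-measurable while $\nu_{t+1},\dots,\nu_N$ are zero-mean, white and independent of $\mathcal{I}_t\cup\{e_t\}$, every cross term and quadratic term in the future innovations collapses to an attack-independent constant (a trace against $\Sigma_\nu$) under $\mathbb{E}[\cdot\mid\mathcal{I}_t]$ and therefore does not affect the minimization over $e_t$. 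At the terminal time the only $e_N$-dependence is through $\mathcal{D}e_N$, so minimizing $(\mathcal{C}\xi_N + \mathcal{D}e_N + \mathcal{M}\nu_N)^T\mathcal{F}_N(\cdot)$ gives~\eqref{eqn: LQGSolutionTerminal} and, by completion of squares, $\mathcal{Q}_N$ as in~\eqref{eqn: QRTerminalConditions}. For $t<N$, the $e_t$-dependent part of $\mathbb{E}[\zeta_t^T\mathcal{F}_t\zeta_t + \xi_{t+1}^T\mathcal{Q}_{t+1}\xi_{t+1}\mid\mathcal{I}_t]$ reduces to $(\mathcal{C}\xi_t + \mathcal{D}e_t + \mathcal{M}\nu_t)^T\mathcal{F}_t(\cdot) + (\mathcal{A}\xi_t + \mathcal{B}e_t)^T\mathcal{Q}_{t+1}(\cdot)$, whose minimizer is~\eqref{eqn: LQGSolution} and whose optimal value yields the Riccati recursion~\eqref{eqn: qtDef}.

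Finally, the main obstacle is the well-posedness of these minimizations, which is where Assumption~\ref{ass: GammaPsiInjective} and $Q_t,R_t\succ 0$ enter. At time $N$ the Hessian $\mathcal{D}^T\mathcal{F}_N\mathcal{D} = \Psi^TR_N\Psi$ is only positive semidefinite (singular whenever $\Psi$ fails to be injective), so I must (i) justify the pseudoinverse by checking the consistency condition $\mathcal{D}^T\mathcal{F}_N(\mathcal{C}\xi_N + \mathcal{M}\nu_N) \in \mathscr{R}(\mathcal{D}^T\mathcal{F}_N\mathcal{D})$, which holds because $\mathscr{R}(\mathcal{D}^T\mathcal{F}_N\mathcal{D}) = \mathscr{R}(\mathcal{D}^T\mathcal{F}_N^{1/2})$, and (ii) acknowledge the resulting non-uniqueness of $e_N$. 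For $t<N$ the Hessian $\mathcal{D}^T\mathcal{F}_t\mathcal{D} + \mathcal{B}^T\mathcal{Q}_{t+1}\mathcal{B}$ must be shown invertible: on $\mathscr{N}(\Psi)$, where $\mathcal{D}^T\mathcal{F}_t\mathcal{D}$ degenerates, injectivity of $\left[\begin{array}{c}\Gamma\\\Psi\end{array}\right]$ forces $\Gamma e\neq 0$ and hence $\mathcal{B}e\neq 0$, so the extra term $\mathcal{B}^T\mathcal{Q}_{t+1}\mathcal{B}$ restores definiteness provided $\mathcal{Q}_{t+1}\succeq 0$ is definite along $\mathscr{R}(\mathcal{B})$. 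Propagating $\mathcal{Q}_t\succeq 0$ backward and securing this definiteness is precisely the content of the sufficient condition on $Q_t,R_t$, and I expect verifying it to be the delicate part of the argument.
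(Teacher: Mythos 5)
Your proposal follows essentially the same route as the paper: the same orthogonal decomposition $x_t = \widehat{x}_t + n_t$ that strips off an attack-independent trace term, the same augmented state $\xi_t$ with dynamics $\xi_{t+1}=\mathcal{A}\xi_t+\mathcal{B}e_t+\mathcal{K}\nu_{t+1}$ and output $\zeta_t$, the same key identity $\nu_t=\nu_t^0$ (the paper's Lemma~\ref{lem: innovationEquality}, which you justify more conceptually via the attack-invariance of the attacker's filter-error recursion, whereas the paper unrolls the recursions and inducts), and the same backward dynamic program with a quadratic ansatz in $(\xi_t,\nu_t)$, including the range condition legitimizing the pseudoinverse at $t=N$ (the paper's Lemma~\ref{lem: terminalOptimumExistence}). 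The one step you flag but leave unverified --- positive definiteness of $\mathcal{D}^T\mathcal{F}_t\mathcal{D}+\mathcal{B}^T\mathcal{Q}_{t+1}\mathcal{B}$ --- is closed in the paper's Lemmas~\ref{lem: QPSD} and~\ref{lem: convexitySufficiency} by exactly the mechanism you anticipate: one shows $\mathcal{Q}_{t+1}=\mathcal{H}^TQ_{t+1}\mathcal{H}+\mathcal{T}_{t+1}$ with $\mathcal{T}_{t+1}\succeq 0$, and since $\mathcal{H}\mathcal{B}=\Gamma+BLK\Psi$ reduces to $\Gamma$ on $\mathscr{N}(\Psi)$, injectivity of the stacked matrix of $\Gamma$ over $\Psi$ together with $Q_{t+1}\succ 0$ supplies definiteness precisely where $\Psi^TR_t\Psi$ degenerates.
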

\begin{corollary}\label{cor: optimalSolutionExistence}
	If $Q_t$ and $R_t$ are positive definite for all $t = 0, \dots, N$, then the optimal cost $J^*$ exists, and, for $t = 0, \dots, N-1$, the optimal attack $e_t$ is unique.
\end{corollary}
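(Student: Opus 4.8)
The plan is to read both claims directly off the dynamic‑programming recursion established in Theorem~\ref{thm: optimalAttackSolution}. At each stage $t = 0,\dots,N-1$ the optimal $e_t$ is the minimizer of a quadratic in $e_t$ whose Hessian is $G_t := \mathcal{D}^T\mathcal{F}_t\mathcal{D} + \mathcal{B}^T\mathcal{Q}_{t+1}\mathcal{B}$, while at the terminal stage the relevant Hessian is $\mathcal{D}^T\mathcal{F}_N\mathcal{D}=\Psi^T R_N\Psi$. A strictly convex quadratic has a unique minimizer, so the whole corollary reduces to two facts: (i) $G_t\succ 0$ for $t=0,\dots,N-1$, which simultaneously makes the inverse in~\eqref{eqn: LQGSolution} well defined and forces uniqueness of $e_t$; and (ii) the infimum is attained, giving existence of $J^*$. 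For (ii) I would first note that $J\ge 0$ for every admissible sequence, since every summand in~\eqref{eqn: AttackerLQG1} is a nonnegative quadratic form ($Q_t,R_t\succeq 0$), so $J^*=\inf J$ is a finite nonnegative number; what remains is attainment.

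Next I would record the positive (semi)definiteness facts. When $Q_t,R_t\succ 0$ we have $\mathcal{F}_t=\diag(Q_t,R_t)\succ 0$. The cleanest, non‑circular route to $\mathcal{Q}_t\succeq 0$ is the value‑function interpretation: $\xi_t^T\mathcal{Q}_t\xi_t$ is the state‑dependent part of the minimal future cost $\min\,\mathbb{E}\big[\sum_{s\ge t}\zeta_s^T\mathcal{F}_s\zeta_s\big]$, a minimum of nonnegative quantities. I also need the sharper comparison $\mathcal{Q}_{t+1}\succeq \mathcal{H}^T Q_{t+1}\mathcal{H}$: the stage‑$(t+1)$ control‑error cost $(\mathcal{H}\xi_{t+1})^T Q_{t+1}(\mathcal{H}\xi_{t+1})$ is a lower bound for the value function $V_{t+1}$ and is independent of all later attacks and of the noise, so examining the quadratic growth of $V_{t+1}(\xi)=\xi^T\mathcal{Q}_{t+1}\xi+(\text{lower order})$ as $\|\xi\|\to\infty$ forces $\mathcal{Q}_{t+1}\succeq \mathcal{H}^T Q_{t+1}\mathcal{H}$.

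The main obstacle is (i), and the difficulty is genuine because $\mathcal{D}^T\mathcal{F}_t\mathcal{D}=\Psi^T R_t\Psi$ is only positive semidefinite: Assumption~\ref{ass: GammaPsiInjective} makes the stacked map $\left[\begin{array}{c}\Gamma\\ \Psi\end{array}\right]$ injective but does \emph{not} force $\Psi$ alone to be injective, so definiteness must be extracted from the future‑cost term $\mathcal{B}^T\mathcal{Q}_{t+1}\mathcal{B}$. I would argue by contradiction: suppose $e\neq 0$ with $e^T G_t e=0$. Since both summands are PSD, $e^T\Psi^T R_t\Psi e=0$ and $(\mathcal{B}e)^T\mathcal{Q}_{t+1}(\mathcal{B}e)=0$; from $R_t\succ 0$ the first gives $\Psi e=0$. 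Substituting $\Psi e=0$ the feedback term $BLK\Psi e$ in the first block of $\mathcal{B}$ vanishes, so the $\widehat{x}$ block of $\mathcal{B}e$ equals $\Gamma e$ while its $\widehat{x}^0$ and $x^*$ blocks are zero; the key structural identity is therefore $\mathcal{H}\mathcal{B}e=\Gamma e$. Combining with the comparison above, $(\mathcal{B}e)^T\mathcal{Q}_{t+1}(\mathcal{B}e)\ge (\mathcal{H}\mathcal{B}e)^T Q_{t+1}(\mathcal{H}\mathcal{B}e)=(\Gamma e)^T Q_{t+1}(\Gamma e)$, which with $Q_{t+1}\succ 0$ forces $\Gamma e=0$. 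Then $\Gamma e=0$ and $\Psi e=0$ contradict the injectivity of $\left[\begin{array}{c}\Gamma\\ \Psi\end{array}\right]$ unless $e=0$. Hence $G_t\succ 0$ for every $t=0,\dots,N-1$.

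Finally I would close both claims. Running the recursion backward, $G_t\succ 0$ for $t<N$ makes~\eqref{eqn: LQGSolution} well defined and its minimizer unique, so the optimal $e_t$ is unique for $t=0,\dots,N-1$. At the terminal stage the quadratic in $e_N$ is convex with PSD Hessian $\Psi^T R_N\Psi$ and consistent gradient, so the pseudoinverse expression~\eqref{eqn: LQGSolutionTerminal} furnishes a minimizer even when $\Psi^T R_N\Psi$ is singular (explaining why $e_N$ need not be unique). Thus the constructed sequence attains the infimum and $J^*$ exists, while strict definiteness holds exactly at stages $t=0,\dots,N-1$, matching the statement.
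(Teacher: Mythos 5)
Your proof is correct and follows essentially the same route as the paper: the paper's Lemma~\ref{lem: convexitySufficiency} establishes $\mathcal{D}^T\mathcal{F}_t\mathcal{D}+\mathcal{B}^T\mathcal{Q}_{t+1}\mathcal{B}\succ 0$ by writing $\mathcal{Q}_{t+1}=\mathcal{H}^TQ_{t+1}\mathcal{H}+\mathcal{T}_{t+1}$ with $\mathcal{T}_{t+1}\succeq 0$ (your comparison $\mathcal{Q}_{t+1}\succeq\mathcal{H}^TQ_{t+1}\mathcal{H}$, obtained there algebraically from the Riccati form of Lemma~\ref{lem: QPSD} rather than from value-function growth) and then invoking Assumption~\ref{ass: GammaPsiInjective} through the full column rank of $\left[\begin{smallmatrix}\Gamma+BLK\Psi\\ \Psi\end{smallmatrix}\right]$, which is the same use of injectivity as your null-vector contradiction. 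The one step you assert rather than prove is the consistency of the terminal-stage gradient, i.e.\ $\mathcal{D}^T\mathcal{F}_N\left(\mathcal{C}\xi_N+\mathcal{M}\nu_N\right)\in\mathscr{R}\left(\mathcal{D}^T\mathcal{F}_N\mathcal{D}\right)$, which is not automatic for a merely PSD Hessian and which the paper isolates as Lemma~\ref{lem: terminalOptimumExistence}, proving it from $R_N\succ 0$ via $\mathscr{N}(\Psi^TR_N\Psi)\subseteq\mathscr{N}(R_N\Psi)$.
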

\noindent The optimal attack $e_N$ may not be unique, and equation~\eqref{eqn: LQGSolutionTerminal} provides one possible attack that achieves the minimum $J^*$. 
Theorem~\ref{thm: optimalAttackSolution} states that the optimal attack strategy is a linear feedback of $\widehat{x}_t$, $\theta_t$, $\nu_t$, $x^*$, and $\widehat{x}^0_t$. Formally, we show that the variables $\widehat{x}_t$, $\theta_t$, $\nu_t$, and $\widehat{x}^0_t$ can be perfectly obtained from $\mathcal{I}_t$. 
\begin{lemma}\label{lem: eAdmissable}
	Given $\mathcal{I}_t$, the attacker can perfectly obtain the values of $\widehat{x}_t$, $\theta_t$, $\nu_t$, and $\widehat{x}_t^0$. 
\end{lemma}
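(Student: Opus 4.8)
The plan is to show, by induction on $t$, that every quantity in the list is a deterministic and explicitly computable function of the data in $\mathcal{I}_t = \{\widetilde{y}_0, \dots, \widetilde{y}_t, e_0, \dots, e_{t-1}\}$, together with the known system matrices and the common pre-attack filter state $\widehat{x}_{0|-1}$ (which the attacker shares with the system because $e_\tau = 0$ for $\tau < 0$, as noted in the footnote to \eqref{eqn: attackerInformation}). The easiest of the four is $\theta_t$: since $\theta_0 = 0$ and $\theta_{t+1} = \widehat{\mathcal{A}}\theta_t + \widehat{\mathcal{B}}e_t$ by \eqref{eqn: eEpsilonSystem}, unrolling the recursion gives $\theta_t = \sum_{j=0}^{t-1}\widehat{\mathcal{A}}^{\,t-1-j}\widehat{\mathcal{B}}e_j$, a function of $e_0, \dots, e_{t-1}$ alone. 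This simultaneously yields its blocks $\omega_t$, $\pi_t$, $\rho_t$, and hence $\gamma_t$ and $\beta_t$ through $\mathcal{P}_1$ and $\mathcal{P}_2$.

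Next I would handle the attacker's own quantities $\widehat{x}_t$ and $\nu_t$. Running the attacker's Kalman filter \eqref{eqn: attackerPredictor}--\eqref{eqn: attackerFilter} requires, at each step, the measurement $\widetilde{y}_t \in \mathcal{I}_t$ and the system input $u_t = L\widehat{x}_t^e$. I would therefore first argue that the attacker can replicate the system's attacked filter: the actual measurement is $y_\tau^e = \widetilde{y}_\tau + \Psi e_\tau$, so for $\tau \le t-1$ it is recoverable from $\mathcal{I}_t$, and feeding $y_0^e, \dots, y_{t-1}^e$ into the system's recursion \eqref{eqn: filterEquation}--\eqref{eqn: kalmanPrediction} (whose control $u^e = L\widehat{x}^e$ is an internal feedback) reproduces $\widehat{x}_\tau^e$ for $\tau \le t-1$ and thus $u_{t-1}$. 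With $u_{t-1}$ and $e_{t-1}$ in hand the attacker forms $\widehat{x}_{t|t-1}$, and then $\nu_t = \widetilde{y}_t - C\widehat{x}_{t|t-1}$ and $\widehat{x}_t = \widehat{x}_{t|t-1} + K\nu_t$ follow directly from \eqref{eqn: attackerInnovationClarify} and \eqref{eqn: attackerFilter}.

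The main obstacle is $\widehat{x}_t^0$, the system's estimate in the counterfactual no-attack world, since it is defined through sensor outputs $y_\tau^0$ that the attacker never actually observes. The key is to reconstruct the no-attack innovation. Subtracting the innovation bias \eqref{eqn: innovationBias}, $\nu_\tau^0 = \nu_\tau^e - \epsilon_\tau$, and substituting \eqref{eqn: epsilonSystemOutput} together with $\nu_\tau^e = \widetilde{y}_\tau + \Psi e_\tau - C\widehat{x}_{\tau|\tau-1}^e$ and the definition of $\beta_\tau$, the two $\Psi e_\tau$ terms cancel, leaving $\nu_\tau^0 = \widetilde{y}_\tau - C\widehat{x}_{\tau|\tau-1}^e + C(A+BL)\omega_\tau - \gamma_\tau - \sum_{j<\tau} CA^{\tau-1-j}\Gamma e_j$, which involves only $\widetilde{y}_\tau$, the already-reconstructed $\widehat{x}_{\tau|\tau-1}^e$, $\omega_\tau$, $\gamma_\tau$, and $e_0, \dots, e_{\tau-1}$; in particular it is free of $e_\tau$, as it must be for a no-attack quantity. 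Propagating the no-attack filter in innovation form, $\widehat{x}_{\tau+1}^0 = (A+BL)\widehat{x}_\tau^0 + K\nu_{\tau+1}^0$ (obtained from \eqref{eqn: filterEquation}--\eqref{eqn: kalmanPrediction} with $u_\tau^0 = L\widehat{x}_\tau^0$) from the common initialization at $\tau = 0$ then recovers $\widehat{x}_t^0$ from $\mathcal{I}_t$. Collecting the four computations and closing the induction (the step to $t+1$ requires $u_t$ and $e_t$, both present in $\mathcal{I}_{t+1}$) completes the proof. I expect the cancellation of the $\Psi e_\tau$ terms, which certifies that $\nu_\tau^0$ and hence $\widehat{x}_t^0$ depend only on strictly causal attack data, to be the one non-routine step.
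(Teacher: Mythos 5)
Your proposal is correct and follows essentially the same route as the paper's proof: $\theta_t$ by unrolling its recursion in $e_0,\dots,e_{t-1}$; $\widehat{x}_t$ and $\nu_t$ from the attacker's filter (after replicating the system's attacked filter to recover $u_{t-1}$); and $\widehat{x}_t^0$ by exploiting the cancellation of the current-attack term $\Psi e_t$ once the pre-attack measurement $\widetilde{y}_t$ is substituted for $y_t^e$. The only difference is cosmetic and concerns the last item: the paper subtracts the estimate bias $\omega_{t+1} = \Omega\theta_t + K\Psi e_t$ directly from the system's attacked filter recursion to obtain a one-step formula for $\widehat{x}_t^0$ in terms of $\widehat{x}_{t-1}^e$, $\theta_t$, and $\widetilde{y}_t$, whereas you reconstruct the counterfactual innovations $\nu_\tau^0$ and re-run the no-attack filter forward --- the same cancellation drives both.
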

\noindent The proof of Lemma~\ref{lem: eAdmissable} is in Appendix~\ref{sect: proofEAdmissable}. 

\subsection{Optimal Solution: Existence}\label{sect: existence}
Before proving Theorem~\ref{thm: optimalAttackSolution}, we provide intermediate results required to prove the sufficient condition for the existence of the optimal attack for all $t = 0, \dots, N$ and the uniqueness of the optimal attack for $t = 0, \dots, N-1$. Proofs of the intermediate results are found in the appendix.

\begin{lemma}\label{lem: QPSD}
	The matrix $\mathcal{Q}_t$ is positive semidefinite for all $t = N, N-1, \dots, 0$. Moreover, for $t = N-1, \dots, 0$, the matrix $Q_t$ follows the backward recursive relationship
\begin{equation}
	\mathcal{Q}_t  = \mathcal{X}_t^T \mathcal{F}_t \mathcal{X}_t + \mathcal{Y}_t^T \mathcal{Q}_{t+1} \mathcal{Y}_t, 
\end{equation}
where
\begin{align}
\begin{split}
	\mathcal{X}_t &= \mathcal{C} - \mathcal{D} \left( \mathcal{D}^T \mathcal{F}_t \mathcal{D} + \mathcal{B}^T \mathcal{Q}_{t+1} \mathcal{B} \right)^{-1}  \left(\mathcal{D}^T \mathcal{F}_t \mathcal{C} + \mathcal{B}^T \mathcal{Q}_{t+1} \mathcal{A} \right), 
\end{split}\\
\begin{split}
	\mathcal{Y}_t &= \mathcal{A} - \mathcal{B} \left( \mathcal{D}^T \mathcal{F}_t \mathcal{D} + \mathcal{B}^T \mathcal{Q}_{t+1} \mathcal{B} \right)^{-1} \left( \mathcal{D}^T \mathcal{F}_t \mathcal{C} + \mathcal{B}^T \mathcal{Q}_{t+1} \mathcal{A} \right),
\end{split}
\end{align}
\end{lemma}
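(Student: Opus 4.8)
The plan is to prove both assertions simultaneously by backward induction on $t$, strengthening the inductive hypothesis so that the recursion is well defined at every step. For the base case $t = N$, I would factor $\mathcal{F}_N = \mathcal{F}_N^{1/2}\mathcal{F}_N^{1/2}$ (legitimate since $Q_N, R_N \succ 0$ force $\mathcal{F}_N \succ 0$) and rewrite the terminal condition~\eqref{eqn: QRTerminalConditions} as $\mathcal{Q}_N = (\mathcal{F}_N^{1/2}\mathcal{C})^T (I - \Pi_N)(\mathcal{F}_N^{1/2}\mathcal{C})$, where $\Pi_N = \mathcal{F}_N^{1/2}\mathcal{D}(\mathcal{D}^T\mathcal{F}_N\mathcal{D})^\dagger \mathcal{D}^T\mathcal{F}_N^{1/2}$. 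The pseudoinverse makes $\Pi_N$ the orthogonal projector onto $\mathscr{R}(\mathcal{F}_N^{1/2}\mathcal{D})$, so $I - \Pi_N$ is also an orthogonal projector and hence positive semidefinite; this gives $\mathcal{Q}_N \succeq 0$ at once.

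For the inductive step, the content of the factored recursion is the standard ``completing the square'' identity of linear–quadratic dynamic programming. Writing $\mathcal{G}_t = \mathcal{D}^T\mathcal{F}_t\mathcal{D} + \mathcal{B}^T\mathcal{Q}_{t+1}\mathcal{B}$ and $\mathcal{L}_t = \mathcal{D}^T\mathcal{F}_t\mathcal{C} + \mathcal{B}^T\mathcal{Q}_{t+1}\mathcal{A}$, so that $\mathcal{X}_t = \mathcal{C} - \mathcal{D}\mathcal{G}_t^{-1}\mathcal{L}_t$ and $\mathcal{Y}_t = \mathcal{A} - \mathcal{B}\mathcal{G}_t^{-1}\mathcal{L}_t$, I would expand $\mathcal{X}_t^T\mathcal{F}_t\mathcal{X}_t + \mathcal{Y}_t^T\mathcal{Q}_{t+1}\mathcal{Y}_t$; using the symmetry of $\mathcal{F}_t$ and $\mathcal{Q}_{t+1}$, the cross terms combine into $-2\mathcal{L}_t^T\mathcal{G}_t^{-1}\mathcal{L}_t$ and the quadratic term contributes $\mathcal{L}_t^T\mathcal{G}_t^{-1}\mathcal{L}_t$, leaving exactly $\mathcal{C}^T\mathcal{F}_t\mathcal{C} + \mathcal{A}^T\mathcal{Q}_{t+1}\mathcal{A} - \mathcal{L}_t^T\mathcal{G}_t^{-1}\mathcal{L}_t$, which is $\mathcal{Q}_t$ in~\eqref{eqn: qtDef}. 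This is a routine computation. Once the factored form is in hand, positive semidefiniteness is immediate: $\mathcal{F}_t \succ 0$ and the inductive hypothesis $\mathcal{Q}_{t+1} \succeq 0$ make both summands $\mathcal{X}_t^T\mathcal{F}_t\mathcal{X}_t$ and $\mathcal{Y}_t^T\mathcal{Q}_{t+1}\mathcal{Y}_t$ positive semidefinite, so $\mathcal{Q}_t \succeq 0$.

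I expect the main obstacle to be that the factored recursion, and indeed~\eqref{eqn: qtDef} itself, only make sense once $\mathcal{G}_t$ is shown invertible; since $\mathcal{D}^T\mathcal{F}_t\mathcal{D} = \Psi^T R_t \Psi$ need not be positive definite, invertibility is not automatic and must draw on Assumption~\ref{ass: GammaPsiInjective}. I would handle this by carrying along the auxiliary invariant $\mathscr{N}(\mathcal{Q}_t) \subseteq \mathscr{N}(\mathcal{H})$ through the same induction. The invariant is preserved because the top $n$-row block of $\mathcal{D}$ is zero, so the top block of both $\mathcal{C}$ and $\mathcal{X}_t$ equals $\mathcal{H}$; from the factored form any $u \in \mathscr{N}(\mathcal{Q}_t)$ satisfies $\mathcal{F}_t^{1/2}\mathcal{X}_t u = 0$, hence $\mathcal{X}_t u = 0$ and $\mathcal{H}u = 0$ (the base case $t = N$ follows the same way from $\mathcal{C}u \in \mathscr{R}(\mathcal{D})$). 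Granting the invariant at $t+1$, invertibility of $\mathcal{G}_t$ follows: if $v^T\mathcal{G}_t v = 0$ then $\mathcal{F}_t^{1/2}\mathcal{D}v = 0$ and $\mathcal{Q}_{t+1}^{1/2}\mathcal{B}v = 0$, so $\Psi v = 0$ and $\mathcal{B}v \in \mathscr{N}(\mathcal{Q}_{t+1}) \subseteq \mathscr{N}(\mathcal{H})$; a direct computation gives $\mathcal{H}\mathcal{B}v = \Gamma v$ when $\Psi v = 0$, so $\Gamma v = 0$ as well, and injectivity of $\left[\begin{array}{c} \Gamma \\ \Psi \end{array}\right]$ forces $v = 0$. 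This closes the loop, and the invertibility argument is the step I expect to require the most care.
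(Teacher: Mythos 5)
Your proof of the two claims actually stated in the lemma follows the paper's own route: backward induction, with positive semidefiniteness of $\mathcal{Q}_N$ read off from a square factorization of the terminal condition (your projector form $(\mathcal{F}_N^{1/2}\mathcal{C})^T(I-\Pi_N)(\mathcal{F}_N^{1/2}\mathcal{C})$ is algebraically identical to the paper's $\mathcal{X}_N^T\mathcal{F}_N\mathcal{X}_N$ with the pseudoinverse gain), and the factored recursion obtained by the standard completion-of-squares expansion, after which $\mathcal{Q}_t\succeq 0$ is immediate from $\mathcal{F}_t\succ 0$ and the inductive hypothesis. Where you genuinely depart from the paper is in the well-posedness of the recursion: the paper does not address invertibility of $\mathcal{G}_t=\mathcal{D}^T\mathcal{F}_t\mathcal{D}+\mathcal{B}^T\mathcal{Q}_{t+1}\mathcal{B}$ inside this proof at all, but defers it to a separate result (Lemma~\ref{lem: convexitySufficiency}), proved by showing $\mathcal{Q}_t=\mathcal{H}^TQ_t\mathcal{H}+\mathcal{T}_t$ with $\mathcal{T}_t\succeq 0$ and then writing $\mathcal{G}_{t-1}=\widetilde{\mathcal{B}}^T\mathcal{F}_{t-1}\widetilde{\mathcal{B}}+\mathcal{B}^T\mathcal{T}_t\mathcal{B}$ with $\widetilde{\mathcal{B}}=\bigl[\begin{smallmatrix}\Gamma+BLK\Psi\\ \Psi\end{smallmatrix}\bigr]$ injective by Assumption~\ref{ass: GammaPsiInjective}. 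Your alternative — carrying the weaker invariant $\mathscr{N}(\mathcal{Q}_t)\subseteq\mathscr{N}(\mathcal{H})$ through the same induction and deducing $\Psi v=0$, $\mathcal{H}\mathcal{B}v=\Gamma v=0$, hence $v=0$ — is correct and rests on the same two ingredients (the block structure putting $\mathcal{H}$ in the top rows of $\mathcal{C}$ and $\mathcal{X}_t$ untouched by $\mathcal{D}$, and the injectivity of $\bigl[\begin{smallmatrix}\Gamma\\ \Psi\end{smallmatrix}\bigr]$). The paper's version buys a quantitative lower bound $\mathcal{Q}_t\succeq\mathcal{H}^TQ_t\mathcal{H}$ that is reused elsewhere; yours is a leaner, self-contained argument that merges the two lemmas into one induction and avoids any appearance of circularity between them. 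You correctly identified invertibility as the one step requiring care, and your treatment of it is sound.
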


%\begin{lemma}\label{lem: QPSDDecomp}
%	There exists a symmetric, positive semidefinite matrix $\mathcal{T}_t$ such that
%\begin{equation}
%	\mathcal{Q}_{t} =  \mathcal{H}^T Q_t \mathcal{H} + \mathcal{T}_t,
%\end{equation}
%for all $t = N, N-1, \dots, 1$.
%\end{lemma}

\begin{lemma}\label{lem: convexitySufficiency}
	The matrix $\mathcal{D}^T \mathcal{F}_t \mathcal{D} + \mathcal{B}^T\mathcal{Q}_{t+1}\mathcal{B}$ is positive definite for all $t =  N-1, \dots, 0$.
\end{lemma}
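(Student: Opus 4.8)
The plan is to prove positive definiteness by showing the kernel is trivial, exploiting that the control-error block of the stage cost is weighted by $Q_t \succ 0$ yet is not directly influenced by the attack $e_t$. First I would record the block structure that drives everything. From the definition of $\mathcal{D}$ (which stacks a zero block over $\Psi$) together with $\mathcal{F}_t = \diag(Q_t, R_t)$, we have $\mathcal{D}^T \mathcal{F}_t \mathcal{D} = \Psi^T R_t \Psi$. Since $R_t \succ 0$, $Q_t \succ 0$, and $\mathcal{Q}_{t+1} \succeq 0$ by Lemma~\ref{lem: QPSD}, the matrix in question is automatically positive semidefinite, so it suffices to show that $e^T\left(\mathcal{D}^T\mathcal{F}_t\mathcal{D} + \mathcal{B}^T\mathcal{Q}_{t+1}\mathcal{B}\right)e = 0$ forces $e = 0$. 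I would also note the two structural identities that carry the argument: the first block-row of $\mathcal{C}$ equals $\mathcal{H}$ while the first block-row of $\mathcal{D}$ is zero, and $\mathcal{H}\mathcal{B} = \Gamma + BLK\Psi$.

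The crux is the lower bound $\mathcal{Q}_s \succeq \mathcal{H}^T Q_s \mathcal{H}$, valid for every $s = 1, \dots, N$. For $s = N$, the least-squares identity applied to the terminal condition~\eqref{eqn: QRTerminalConditions} gives $v^T\mathcal{Q}_N v = \min_e \left(\mathcal{C}v + \mathcal{D}e\right)^T\mathcal{F}_N\left(\mathcal{C}v + \mathcal{D}e\right)$; because the first block of $\mathcal{C}v + \mathcal{D}e$ equals $\mathcal{H}v$ independently of $e$ and $\mathcal{F}_N$ is block diagonal, this minimum is at least $\left(\mathcal{H}v\right)^T Q_N \left(\mathcal{H}v\right)$. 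For $s < N$ I would invoke the factored recursion of Lemma~\ref{lem: QPSD}, namely $\mathcal{Q}_s = \mathcal{X}_s^T \mathcal{F}_s \mathcal{X}_s + \mathcal{Y}_s^T \mathcal{Q}_{s+1}\mathcal{Y}_s$: since $\mathcal{D}$ has zero first block-row, the first block-row of $\mathcal{X}_s$ is again $\mathcal{H}$, whence $\mathcal{X}_s^T\mathcal{F}_s\mathcal{X}_s \succeq \mathcal{H}^T Q_s \mathcal{H}$, and the remaining term is positive semidefinite. Because the factored form at index $s$ presupposes the very invertibility asserted here, I would organize the overall argument as a backward induction on $t$: the base case $t = N-1$ uses only the terminal $\mathcal{Q}_N$, and each later step uses the factored form at $t+1$, whose validity was secured at the previous step.

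Applying this bound at $s = t+1$ yields $\mathcal{B}^T \mathcal{Q}_{t+1}\mathcal{B} \succeq \left(\mathcal{H}\mathcal{B}\right)^T Q_{t+1}\left(\mathcal{H}\mathcal{B}\right) = \left(\Gamma + BLK\Psi\right)^T Q_{t+1}\left(\Gamma + BLK\Psi\right)$, so that
\[
\mathcal{D}^T\mathcal{F}_t\mathcal{D} + \mathcal{B}^T\mathcal{Q}_{t+1}\mathcal{B} \succeq \Psi^T R_t \Psi + \left(\Gamma + BLK\Psi\right)^T Q_{t+1}\left(\Gamma + BLK\Psi\right).
\]
If $e$ annihilates the right-hand side, then $R_t \succ 0$ and $Q_{t+1}\succ 0$ force $\Psi e = 0$ and $\left(\Gamma + BLK\Psi\right)e = 0$; substituting the former into the latter gives $\Gamma e = 0$, hence $\left[\Gamma^T\ \Psi^T\right]^T e = 0$ and $e = 0$ by the injectivity of Assumption~\ref{ass: GammaPsiInjective}. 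Thus the right-hand side, and therefore the original matrix, is positive definite.

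The main obstacle is identifying the correct lower bound. The weaker fact $\mathcal{Q}_{t+1}\succeq 0$ only yields domination by $\Psi^T R_t \Psi$, which may well be singular because $\Psi$ alone need not be injective; it is the $Q_s$-weighted control-error term, surviving through the block-row $\mathcal{H}$, that reintroduces $\Gamma$ and lets Assumption~\ref{ass: GammaPsiInjective} close the argument. A secondary subtlety is the interdependence between this lemma and the factored recursion of Lemma~\ref{lem: QPSD}, which I would dispatch by the backward induction described above.
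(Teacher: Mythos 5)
Your proof is correct and follows essentially the same route as the paper: both arguments rest on the key lower bound $\mathcal{Q}_s \succeq \mathcal{H}^T Q_s \mathcal{H}$ (the paper writes this as $\mathcal{Q}_s = \mathcal{H}^T Q_s \mathcal{H} + \mathcal{T}_s$ with $\mathcal{T}_s \succeq 0$), reduce the claim to the positive definiteness of $\Psi^T R_t \Psi + \left(\Gamma + BLK\Psi\right)^T Q \left(\Gamma + BLK\Psi\right)$, and close with the injectivity of $\left[\begin{smallmatrix}\Gamma \\ \Psi\end{smallmatrix}\right]$ from Assumption~\ref{ass: GammaPsiInjective}, all organized as a backward induction that also resolves the interdependence with the factored recursion of Lemma~\ref{lem: QPSD} exactly as the paper does.
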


\begin{lemma}\label{lem: terminalOptimumExistence}
	For all $\xi_N \in \mathbb{R}^{6n}$ and for all $\nu_N \in \mathbb{R}^p$, $\mathcal{D}^T \mathcal{F}_N \left(\mathcal{C}\xi_N + \mathcal{M}\nu_N \right) \in \mathscr{R} \left(\mathcal{D}^T \mathcal{F}_N \mathcal{D}\right)$.
\end{lemma}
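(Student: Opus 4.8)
The plan is to exploit the block structure of $\mathcal{D}$, $\mathcal{F}_N$, $\mathcal{C}$, and $\mathcal{M}$ to reduce the abstract range-space condition to a concrete statement about $\Psi$ and $R_N$, and then to invoke a standard fact about the range of a Gram-type matrix.

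First I would compute the two quantities appearing in the statement using the block forms $\mathcal{D} = [0^T\ \Psi^T]^T$ and $\mathcal{F}_N = \diag(Q_N, R_N)$. Because $\mathcal{D}$ has a zero top block, the $Q_N$ block is annihilated on both sides, giving $\mathcal{D}^T\mathcal{F}_N\mathcal{D} = \Psi^T R_N \Psi$. Likewise, the vector $\mathcal{C}\xi_N + \mathcal{M}\nu_N$ has top block $\mathcal{H}\xi_N = \widehat{x}_N - x^*$ and bottom block $\widehat{\mathcal{C}}\theta_N + \nu_N \in \mathbb{R}^p$; since the top block is again annihilated by $\mathcal{D}^T\mathcal{F}_N$, we obtain $\mathcal{D}^T\mathcal{F}_N(\mathcal{C}\xi_N + \mathcal{M}\nu_N) = \Psi^T R_N z$, where $z := \widehat{\mathcal{C}}\theta_N + \nu_N$. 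It therefore suffices to show $\Psi^T R_N z \in \mathscr{R}(\Psi^T R_N \Psi)$, and I would in fact prove this for \emph{every} $z \in \mathbb{R}^p$, which covers the particular $z$ determined by $\xi_N$ and $\nu_N$.

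The key step is then a purely linear-algebraic identity. Since $R_N \succ 0$ (from the definition of the cost function), it admits a symmetric invertible square root $R_N^{1/2}$, so that $\Psi^T R_N \Psi = (R_N^{1/2}\Psi)^T(R_N^{1/2}\Psi)$. Applying the standard fact $\mathscr{R}(M^T M) = \mathscr{R}(M^T)$ with $M = R_N^{1/2}\Psi$ gives $\mathscr{R}(\Psi^T R_N \Psi) = \mathscr{R}(\Psi^T R_N^{1/2})$. Finally, $\Psi^T R_N z = (\Psi^T R_N^{1/2})(R_N^{1/2} z)$ is manifestly an element of $\mathscr{R}(\Psi^T R_N^{1/2})$, which closes the argument.

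I do not expect a genuine obstacle here: once the block multiplication is carried out, the content is the elementary range identity, which holds precisely because $R_N$ is positive definite (and hence $\mathscr{N}(R_N^{1/2}\Psi) = \mathscr{N}(\Psi)$, the fact underlying $\mathscr{R}(M^TM) = \mathscr{R}(M^T)$). The only point requiring care is the bookkeeping of the block dimensions, to confirm that the $Q_N$ block truly drops out and that only the invertibility of $R_N$ — not any property of $Q_N$ or of the specific vectors $\xi_N$, $\nu_N$ — is actually used.
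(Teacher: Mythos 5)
Your proof is correct and follows essentially the same route as the paper's: both reduce, via the block structure of $\mathcal{D}$ and $\mathcal{F}_N$, to showing that $\mathscr{R}\left(\Psi^T R_N\right) \subseteq \mathscr{R}\left(\Psi^T R_N \Psi\right)$, using only $R_N \succ 0$. The paper establishes this inclusion by proving $\mathscr{N}\left(\Psi^T R_N \Psi\right) \subseteq \mathscr{N}\left(R_N \Psi\right)$ by contradiction and then dualizing via orthogonal complements, whereas you factor through $R_N^{1/2}$ and invoke $\mathscr{R}\left(M^T M\right) = \mathscr{R}\left(M^T\right)$ with $M = R_N^{1/2}\Psi$ --- a purely cosmetic difference in how the same elementary fact is justified.
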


\subsection{Proof of Theorem~\ref{thm: optimalAttackSolution}}\label{sect: proofs}
We first manipulate the cost function $J$. Let ${n}_t$ be the error of estimating $x_t$ from all $\mathcal{I}_t$ (i.e., $x_t = \widehat{x}_t + {n}_t$). The estimate $\widehat{x}_t$ and error $n_t$ are conditionally Gaussian and conditionally orthogonal given $\mathcal{I}_t$. From properties of conditional Gaussian processes and the Kalman Filter~\cite{Speyer}, we know that the $n_t$ are i.i.d. $\mathcal{N}\left(0, \widehat{P}\right)$, where
\[\widehat{P} = P - PC^T \Sigma^{-1}_{\nu}CP.\]  Substituting $x_t = \widehat{x}_t + {n}_t$ into the cost function $J$, we have
\begin{align}
\begin{split}
	J &= \overline{J} + \sum_{t=0}^N \trace\left(\widehat{P}Q_t\right),
\end{split}
\end{align}
where
\begin{equation}
	\overline{J} = \mathbb{E}\Bigg[\sum_{t=0}^N \bigg(\left(\widehat{x}_t-x^*\right)^T Q_t \left(\widehat{x}_t-x^*\right) + {\nu_t^e}^T R_t \nu_t^e \bigg)\Bigg]. 
\end{equation}
The attacker's Kalman filter error does not depend on the input $e_t$, so we find the optimal attack sequence by solving
\begin{equation}
	\overline{J}^* = \inf_{e_0, \dots, e_N} \overline{J}, 
\end{equation}

From the attacker's Kalman filter equations, the dynamics of $\widehat{x}_t$ follow
\begin{equation}\label{eqn: attackerKalmanDynamics}
	\begin{split}
		\widehat{x}_{t+1} =& A\widehat{x}_t + BL\left(\widehat{x}_t^0 + \Omega \theta_t + K\Psi e_t \right) + \Gamma e_t + K \nu_{t+1}, 
	\end{split}
\end{equation}
where we used that $\widehat{x}_t^e = \widehat{x}_t^0 + \omega_{t+1}$ and $\omega_{t+1} = \Omega \theta_t + K\Psi e_t$. The dynamics of $\theta_t$ are given by equation~\eqref{eqn: eEpsilonSystem}. 
To proceed, we require the following intermediate result:
\begin{lemma}\label{lem: innovationEquality}
	The attacker's innovation $\nu_t$, defined in~\eqref{eqn: attackerInnovationClarify}, satisfies $\nu_t = \nu_t^0$, where $\nu_t^0$ is the system's innovation at time $t$ in the case that there had been no attack (i.e., in the case that $e_0, \dots, e_t = 0$).
\end{lemma}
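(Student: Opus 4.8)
The plan is to reduce both innovations to a common \emph{prediction error} process and show that this process is untouched by the attack. Introduce the attacker's one-step prediction error $\eta_t = x_t - \widehat{x}_{t|t-1}$. Because the attacker filters the uncorrupted measurement $\widetilde{y}_t = Cx_t + v_t$ (Assumption~\ref{ass: attackerSensorKnowledge}) and $\nu_t = \widetilde{y}_t - C\widehat{x}_{t|t-1}$ by~\eqref{eqn: attackerInnovationClarify}, I immediately obtain $\nu_t = C\eta_t + v_t$. The crucial observation is that the term $\Psi e_t$, which corrupts the \emph{system's} output, never enters the attacker's measurement, so the attacker's effective output equation is identical to the nominal one.

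Next I would derive the recursion for $\eta_t$. Substituting $\widetilde{y}_t = Cx_t + v_t$ into the attacker's filter~\eqref{eqn: attackerFilter} gives $x_t - \widehat{x}_t = (I_n - KC)\eta_t - Kv_t$, and subtracting the predictor~\eqref{eqn: attackerPredictor} from the state update~\eqref{eqn: ssModel1} yields
\begin{equation}
  \eta_{t+1} = A(I_n - KC)\eta_t - AKv_t + w_t.
\end{equation}
The key point is that the known inputs $Bu_t + \Gamma e_t$ appear identically in the true dynamics and in the attacker's predictor, so they cancel; the prediction error is driven by the process and measurement noise alone and is \emph{independent of the attack}. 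Performing the same computation for the no-attack system, where $e_t = 0$ and the control is $u_t^0 = L\widehat{x}_t^0$, produces an error $\eta_t^0 = x_t^0 - \widehat{x}_{t|t-1}^0$ obeying exactly the same recursion driven by the same noise realizations $w_t, v_t$, with $\nu_t^0 = C\eta_t^0 + v_t$.

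It then remains to match initial conditions. By Assumption~\ref{ass: attackBeginTime}, $e_t = 0$ for $t < 0$, so on the pre-attack interval the attacker's filter, the attacked system's filter, and the nominal system's filter receive identical data and produce the same estimates; in particular the entire pre-attack history coincides, giving $x_0 = x_0^0$ and $\widehat{x}_{0|-1} = \widehat{x}_{0|-1}^0$, hence $\eta_0 = \eta_0^0$. Since $\eta_t$ and $\eta_t^0$ satisfy the same linear recursion with the same forcing and the same initial value, induction gives $\eta_t = \eta_t^0$ for all $t \geq 0$, whence $\nu_t = C\eta_t + v_t = C\eta_t^0 + v_t = \nu_t^0$.

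I expect the main subtlety to be conceptual rather than computational: one must resist comparing the true states, which \emph{do} diverge ($x_t \neq x_t^0$ for $t \geq 1$ because of $\Gamma e_t$ and the altered feedback $u_t^e \neq u_t^0$), and instead recognize that the innovation depends only on the input-invariant prediction error. Making the cancellation of the known inputs explicit, together with the synchronization of all three filters before $t = 0$, is the heart of the argument.
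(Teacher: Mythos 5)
Your proof is correct, but it takes a genuinely different route from the paper's. The paper's argument unrolls both Kalman filters into explicit sums over the attack history, expresses $\widetilde{y}_t$, $\widehat{x}_{t-1}$, and $\widehat{x}_{t-1}^0$ in terms of $e_j$, the estimate biases $\omega_{j+1}$, and the past innovations, and arrives at the self-referential identity $\nu_t = \nu_t^0 + \sum_{j=0}^{t-1} CA^{t-1-j}K\left(\nu_j - \nu_j^0\right)$, from which $\nu_t = \nu_t^0$ follows by induction on $t$ starting from $\nu_0 = \nu_0^0$. You instead reduce both innovations to the one-step prediction error $\eta_t = x_t - \widehat{x}_{t|t-1}$, note that the known inputs $Bu_t + \Gamma e_t$ cancel between the true dynamics~\eqref{eqn: ssModel1} and the attacker's predictor~\eqref{eqn: attackerPredictor} so that $\eta_t$ obeys the input-free recursion $\eta_{t+1} = A(I_n-KC)\eta_t - AKv_t + w_t$, and then match initial conditions at $t=0$ via Assumption~\ref{ass: attackBeginTime}. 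This is the classical observation that a Kalman filter's innovation is invariant to known deterministic inputs; it buys a shorter argument that sidesteps the paper's bias bookkeeping and its convolution identity, and it makes the underlying reason for the lemma more transparent. Two points are worth keeping explicit in your write-up: first, the comparison is pathwise --- $\nu_t^0$ is defined on the same noise realization $\{w_t, v_t\}$ as $\nu_t$ (this is implicit in the paper's decomposition $\nu_t^e = \nu_t^0 + \epsilon_t$), which you correctly invoke when saying the two error recursions are driven by the same noise; second, the cancellation of $Bu_t$ relies on the attacker's predictor using the \emph{actual} applied control $u_t = L\widehat{x}_t^e$, which the attacker can reconstruct by Assumption~\ref{ass: attackerSystemKnowledge} --- the same fact the paper uses when it writes out $\widehat{x}_{t-1}$ explicitly.
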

%\noindent Lemma~\ref{lem: innovationEquality} states that, given the information set $\mathcal{I}_t$, the attacker knows that the value of the innovation $\nu_t$ is equal to the value of the system's innovation had there been no attack, $\nu_t^0$. 
\noindent The proof of Lemma~\ref{lem: innovationEquality} is provided in Appendix~\ref{sect: proofInnovationEquality}. Then, the dynamics of $\xi_t$ are
\begin{equation}\label{eqn: xiDynamics}
	\xi_{t+1} = \mathcal{A} \xi_t + \mathcal{B} e_t + \mathcal{K} \nu_{t+1}. 
\end{equation}
 Using Lemma~\ref{lem: innovationEquality} and equation~\eqref{eqn: eEpsilonSystem}, we have
\begin{equation}\label{eqn: zetaSystemEquation}
	\zeta_t = \mathcal{C} \xi_t + \mathcal{D} e_t + \mathcal{M} \nu_t. 
\end{equation}
Substituting for $\xi_t$, $\zeta_t$, and $\zeta^*$, we rewrite $\overline{J}$ as
%\begin{equation}
	$\overline{J} = \mathbb{E} \left[\sum_{t=0}^N \zeta^T \mathcal{F}_t \zeta \right].$
%\end{equation}

We now prove Theorem~\ref{thm: optimalAttackSolution}.
\begin{proof}[Proof (Theorem~\ref{thm: optimalAttackSolution})]
We resort to dynamic programming. Define $\overline{J}^*_t$ to be the optimal return function at time $t$ for information $\mathcal{I}_t$. We begin with $t = N$:
\begin{equation}\label{eqn: solutionProof1}
	\overline{J}^*_N = \inf_{e_N} \mathbb{E} \left[ \zeta_N^T \mathcal{F}_N \zeta_N \Big\vert \mathcal{I}_N \right].
\end{equation}
Substituting equation~\eqref{eqn: zetaSystemEquation} for $\zeta_N$ and setting the first derivative (with respect to $e_N$) of the right hand side of~\eqref{eqn: solutionProof1} equal to $0$, we have that the optimal $e_N$ satisfies
%\begin{equation}\label{eqn: solutionProof2}
%\begin{split}
%	\left(\mathcal{D}^T\mathcal{F}_N \mathcal{D}\right)e_N =& \mathcal{D}^T\mathcal{F}_N \times \\
%		& \quad \left(\mathcal{C} \left(\xi_N-\xi^*\right)+ \left[\begin{array}{c} 0 \\ I_p \end{array}\right] \nu_{N}\right),
%\end{split}
%\end{equation}
%or, equivalently, 
\begin{equation}\label{eqn: solutionProof3}
	\left(\mathcal{D}^T\mathcal{F}_N \mathcal{D} \right) e_N = \mathcal{D}^T \mathcal{F}_N\left(\mathcal{C}\xi_N + \mathcal{M}\nu_N\right). 
\end{equation}
The matrix $\mathcal{D}^T \mathcal{F}_N \mathcal{D}$ is positive semidefinite since $\mathcal{F}_N$ is positive definite. A sufficient condition for the existence of an optimal $e_N$ is $\mathcal{D}^T \mathcal{F}_N \left(\mathcal{C}\xi_N + \mathcal{M} \nu_N\right) \in \mathscr{R} \left( \mathcal{D}^T \mathcal{F}_N \mathcal{D} \right)$~\cite{Boyd}, which was shown in Lemma~\ref{lem: terminalOptimumExistence}. Then, an optimal $e_N$ is
\begin{equation}\label{eqn: solutionProof4}
	e_N =  -\left(\mathcal{D}^T \mathcal{F}_N \mathcal{D}\right)^\dagger\left(\mathcal{D}^T \mathcal{F}_N\left(\mathcal{C} \xi_N + \mathcal{M}\nu_N\right)\right). 
\end{equation}
Substituting equation~\eqref{eqn: solutionProof4} into~\eqref{eqn: solutionProof1}, we find, after algebraic manipulation, 
\begin{equation}\label{eqn: solutionProof5}
\begin{split}
	&\overline{J}^*_N= \xi_N^T \mathcal{Q}_N \xi_N + 2\xi_N^T\mathcal{R}_N {\nu_N} + {\nu_N}^T \mathcal{S}_N \nu_N + \Pi_N,
\end{split}
\end{equation}
where
\begin{align}\label{eqn: solutionProof6}
\begin{split}
	\mathcal{Q}_N &= \mathcal{C}^T \mathcal{F}_N \mathcal{C} - \mathcal{C}^T \mathcal{F}_N \mathcal{D} \left(\mathcal{D}^T\mathcal{F}_N\mathcal{D}\right)^\dagger \mathcal{D}^T\mathcal{F}_N\mathcal{C}, 
\end{split}\\
\begin{split}
	\mathcal{R}_N &= \mathcal{C}^T \mathcal{F}_N \mathcal{M} - \mathcal{C}^T \mathcal{F}_N \mathcal{D} \left(\mathcal{D}^T\mathcal{F}_N\mathcal{D}\right)^\dagger \mathcal{D}^T\mathcal{F}_N\mathcal{M}, 
\end{split} \\
\begin{split}
\mathcal{S}_N &= R_N - R_N\Psi\left(\mathcal{D}^T\mathcal{F}_N\mathcal{D}\right)^\dagger \Psi^TR_N,
\end{split}
\end{align}
and
%\begin{equation}\label{eqn: solutionProof11}
	$\Pi_N = 0.$
%\end{equation}

Following dynamic programming, we proceed to $t = N-1$: 
\begin{equation}\label{eqn: solutionProof12}
\begin{split}
	&\overline{J}^*_{N-1} = \inf_{e_{N-1}} \mathbb{E} \left[\overline{J}_N^* + \zeta_{N-1}^T \mathcal{F}_{N-1} \zeta_{N-1} \Big| \mathcal{I}_{N-1} \right].
\end{split}
\end{equation}
%We evaluate the term $\mathbb{E} \left[ \overline{J}^*_N \left( \mathcal{I}_N \right) \Big| \mathcal{I}_{N-1} \right]$ by substituting \[\xi_N = \mathcal{A} \xi_{N-1} + \mathcal{B}e_{N-1} + \left[ \begin{array}{c} K \\ 0 \end{array} \right] \nu_N + \left[\begin{array}{c} BL \\ 0 \end{array} \right] \widehat{x}_{N-1}^0\] (following equation~\eqref{eqn: xiDynamics}) and \[\widehat{x}_N^0 = (A+BL) \widehat{x}_{N-1}^0 + K \nu_N^0\] (following the system's Kalman filtering equations) into equation~\eqref{eqn: solutionProof12}. 
%Define \[\overline{\xi}_{N-1} =  \mathcal{A} \xi_{N-1} + \mathcal{B}e_{N-1} + \mathcal{L} \widehat{x}_{N-1}^0,\] 
%and partition the matrix $\mathcal{Q}_N$ as
%%\begin{equation}\label{eqn: qPartition}
%	$\mathcal{Q}_N = \left[\begin{array}{cc} \mathcal{E}_N & \mathcal{G}_N \\ \mathcal{G}_N^T & \mathcal{H}_N \end{array}\right],$
%%\end{equation}
%where $\mathcal{E}_N \in \mathbb{R}^{n\times n}$.
Define \[\overline{\xi}_{N-1} = \mathcal{A}\xi_{N-1} + \mathcal{B} e_{N-1}.\] 
After substitution of equation~\eqref{eqn: xiDynamics} and the Kalman filter equations into~\eqref{eqn: solutionProof5} and performing algebraic manipulation, we have
\begin{equation}\label{eqn: solutionProof13}
\begin{split}
	&\mathbb{E} \left[ \overline{J}^*_N\Big| \mathcal{I}_{N-1} \right] = \overline{\xi}_{N-1}^T \mathcal{Q}_N \overline{\xi}_{N-1} + \Pi_N + \trace \left( \Sigma_\nu \left( \mathcal{K}^T \mathcal{Q}_N \mathcal{K} + 2 \mathcal{K}^T \mathcal{R}_N + \mathcal{S}_N \right)\right).
\end{split}
\end{equation}
Thus, we have
\begin{equation}\label{eqn: solutionProof15}
\begin{split}
	\overline{J}_{N-1} &= \Pi_{N-1} + \inf_{e_{N-1}}\Big\{ \overline{\xi}_{N-1}^T \mathcal{Q}_N \overline{\xi}_{N-1} +  \zeta_{N-1}^T \mathcal{F}_{N-1} \zeta_{N-1}\Big\},
\end{split}
\end{equation}
where
\begin{equation}\label{eqn: solutionProof16}
\begin{split}
	\Pi_{N-1} =& \trace \left( \Sigma_\nu \left( \mathcal{K}^T \mathcal{Q}_N \mathcal{K} + 2 \mathcal{K}^T \mathcal{R}_N + \mathcal{S}_N \right)\right) + \Pi_N.
\end{split}
\end{equation}
Then, the optimal $e_{N-1}$ is given by
\begin{equation}\label{eqn: solutionProof18}
\begin{split}
	 &e_{N-1} = -\left( \mathcal{D}^T \mathcal{F}_{N-1} \mathcal{D} + \mathcal{B}^T \mathcal{Q}_{N} \mathcal{B} \right)^{-1}\left(\left(\mathcal{D}^T \mathcal{F}_{N-1} \mathcal{C} + \mathcal{B}^T \mathcal{Q}_{N} \mathcal{A} \right) \xi_{N-1} +\mathcal{D}^T \mathcal{F}_{N-1} \mathcal{M} \nu_{N-1}\right).
\end{split}
\end{equation}
Note that a sufficient condition for the existence of a minimizing $e_{N-1}$ is the positive definiteness of \[\left( \mathcal{D}^T \mathcal{F}_{N-1} \mathcal{D} + \mathcal{B}^T \mathcal{Q}_{N} \mathcal{B} \right)\]~\cite{Boyd}, which was shown in Lemma~\ref{lem: convexitySufficiency}. 

We substitute equation~\eqref{eqn: solutionProof18} into~\eqref{eqn: solutionProof15} and perform algebraic manipulations so that $\overline{J}_{N-1}^*$ is expressed in the form
\begin{equation}\label{eqn: solutionProof19}
\begin{split}
	\overline{J}^*_{N-1}&= \xi_{N-1}^T \mathcal{Q}_{N-1} \xi_{N-1} + 2\xi_{N-1}^T\mathcal{R}_{N-1} {\nu_{N-1}} + {\nu_{N-1}}^T \mathcal{S}_{N-1} {\nu_{N-1}} + \Pi_{N-1}.
\end{split}
\end{equation}
We find the recurrence relations for the matrices in the above equation by grouping together the appropriate terms of $\overline{J}^*_{N-1}$. The matrix $\mathcal{Q}_{N-1}$ follows the recursion
\begin{equation}\label{eqn: solutionProof20}
\begin{split}
	&\mathcal{Q}_{N-1} = \mathcal{C}^T \mathcal{F}_{N-1} \mathcal{C} + \mathcal{A}^T \mathcal{Q}_{N} \mathcal{A} - \\
	& \quad \left(\mathcal{D}^T \mathcal{F}_{N-1} \mathcal{C} + \mathcal{B}^T \mathcal{Q}_N \mathcal{A} \right)^T (\mathcal{D}^T \mathcal{F}_{N-1} \mathcal{D} + \mathcal{B}^T \mathcal{Q}_N \mathcal{B})^{-1} \left(\mathcal{D}^T \mathcal{F}_{N-1} \mathcal{C} + \mathcal{B}^T \mathcal{Q}_N \mathcal{A} \right).
\end{split}
\end{equation}
The matrix $\mathcal{R}_{N-1}$ follows
\begin{equation}\label{eqn: solutionProof21}
\begin{split}
	&\mathcal{R}_{N-1} = \mathcal{C}^T \mathcal{F}_{N-1} \mathcal{M}- \left(\mathcal{D}^T \mathcal{F}_{N-1} \mathcal{C} + \mathcal{B}^T \mathcal{Q}_N \mathcal{A} \right)^T \left(\mathcal{D}^T \mathcal{F}_{N-1} \mathcal{D} + \mathcal{B}^T \mathcal{Q}_N \mathcal{B}\right)^{-1}\mathcal{D}^T \mathcal{F}_{N-1} \mathcal{M}.
\end{split}
\end{equation}
The matrix $\mathcal{S}_{N-1}$ follows\
\begin{equation}\label{eqn: solutionProof22}
\begin{split}
	\mathcal{S}_{N-1} &= R_{N-1} -  R_{N-1} \Psi\left(\mathcal{D}^T \mathcal{F}_{N-1} \mathcal{D} + \mathcal{B}^T \mathcal{Q}_N \mathcal{B}\right)^{-1} \Psi^T R_{N-1}.
\end{split}
\end{equation}

%We iterate, following dynamic programming, for $t = N-2, \dots, 0$. 
In each following iteration for $t = N-2, \dots, 0$, the optimal attack $e_t$ has the same form as equation~\eqref{eqn: solutionProof18} where we replace $N-1$ with $t$. The matrices involved in the return function $\overline{J}_t^*$ follow equations~\eqref{eqn: solutionProof16} and~\eqref{eqn: solutionProof20}--\eqref{eqn: solutionProof22} where we replace $N-1$ with~$t$. 
\end{proof}

The proof of Theorem~\ref{thm: optimalAttackSolution} states that, at each step $t$ of dynamic programming, if the matrices $Q_t$ and $R_t$ are positive definite, then there exists an optimal $e_t$. Since there exists an optimal $e_t$ for all steps $t = 0, \dots, N$, the matrices $Q_t$ and $R_t$ being positive definite is a sufficient condition for the existence of an optimal attack sequence $e_0, \dots, e_N$ and an optimal cost $J^*$. We calculate the optimal cost $J^*$ as~\cite{Speyer}
\begin{equation}
	J^* = \mathbb{E} \left[ \overline{J}^*_0 \right] + \sum_{t=0}^N \trace \left( \widehat{P} Q_t \right).
\end{equation}
We express the optimal cost in terms of the (unconditional) statistics of $\widehat{x}_{0|-1}$, the attacker's estimate of $x_0$ at time $-1$, i.e., right before beginning the attack. The quantity $\widehat{x}_{0|-1}$ is (unconditionally) distributed as $\mathcal{N} \left( \overline{x}_0, \Sigma_0 \right)$. Since $A+BL$ is  stable by definition, from the system and controller equations (\eqref{eqn: ssModel1} and~\eqref{eqn: sysController}) and the Kalman filter equations (\eqref{eqn: LimitingKalman} --~\eqref{eqn: kalmanPrediction}), we have
\begin{equation}
	\overline{x}_0 = \lim_{t \rightarrow \infty} (A+BL)^t \overline{x}_{-\infty} = 0,
\end{equation}
and $\Sigma_0$ is the unique solution to the equation
\begin{equation}
	\Sigma_0 = (A+BL)\left(\Sigma_0+ K \Sigma_{\nu} K^T\right) (A+BL)^T.
\end{equation}
Define
\begin{equation}
	\Sigma_{\xi_0}^* = \Sigma_{\xi_0} + \xi^* {\xi^*}^T,
\end{equation}
where
$\Sigma_{\xi_0} = \left[\begin{array}{cccc} I_n & 0 & I_n & 0\end{array}\right]\Sigma_0\left[\begin{array}{cccc} I_n & 0 & I_n & 0\end{array}\right]^T,$
and
${\xi^*}^T = \left[\begin{array}{cccc} 0^T & 0^T & 0^T & {x^*}^T \end{array} \right].$ Then, the cost $J^*$~is
\begin{equation}\label{eqn: JStar}
\begin{split}
	J^* = &\trace\left(\Sigma_{\xi_0}^* \mathcal{Q}_0 \right) + \sum_{t=0}^N\trace\Big(\widehat{P}Q_t + \Sigma_\nu \left( \mathcal{K}^T \mathcal{Q}_t \mathcal{K} + 2 \mathcal{K}^T \mathcal{R}_t + \mathcal{S}_t \right)\Big).
\end{split}
\end{equation}

\section{Weighted Cost Function Performance}\label{sect: weightedCostFunction}
This section studies the effect of the relative weighting between the control error component and detection statistic energy component of the cost function on the performance of the optimal attack. Consider the modified function given by~\eqref{eqn: modifiedJ}.
For any $\alpha > 0$, Theorem~\ref{thm: optimalAttackSolution} gives the sequence of attacks $e_0, \dots, e_N$ that minmizes $J_\alpha$, which has the general form
\begin{equation}\label{eqn: generalOptimalAttack}
	e_{\alpha, t} = \mathcal{L}_{\alpha, t} \xi_t + \mathcal{O}_{\alpha, t} \nu_t.
\end{equation}
We study how the attacker's choice of $\alpha$ affects the values of $J_d$ and $J_c$, and we analyze the asymptotic behavior of $J_d$ and $J_c$ as alpha approaches $0$ and $\infty$, respectively. 

\subsection{$\alpha$-Weighted Performance}
We derive expressions for the detection statistic cost and the normalized control error cost when the attack has the form of equation~\eqref{eqn: generalOptimalAttack} (i.e., when the attack is the optimal attack for the $\alpha$-weighted cost function, $J_\alpha$). Let $J_{\alpha, d}$ and $J_{\alpha, c}$ be the values of $J_d$ and $J_c$ when the attack is $e_{\alpha, t}$. 

\begin{theorem}[$\alpha$-Weighted Performance]\label{thm: alphaSweep}
	Let $e_0, \dots, e_N$ be the sequence of attacks that minimizes $J_\alpha$ given by equation~\eqref{eqn: generalOptimalAttack}. Then, the detection statistic cost, $J_{\alpha, d}$, and the normalized control error cost, $J_{\alpha, c}$, are given as
\begin{align}
\begin{split}\label{eqn: JDAlpha}
	J_{\alpha, d} = &\trace\left(\Sigma_{\xi_0}^* \widetilde{\mathcal{Q}}_{\alpha, 0} \right) + \sum_{t=0}^N\trace\Big(\Sigma_\nu \left( \mathcal{K}^T \widetilde{\mathcal{Q}}_{\alpha, t} \mathcal{K} + 2 \mathcal{K}^T \widetilde{\mathcal{R}}_{\alpha, t} + \widetilde{\mathcal{S}}_{\alpha, t} \right)\Big),
\end{split}\\
\begin{split}\label{eqn: JCAlpha}
	J_{\alpha, c} = &\trace\left(\Sigma_{\xi_0}^* \overline{\mathcal{Q}}_{\alpha, 0} \right) + \sum_{t=0}^N\trace\Big(\widehat{P}Q_t + \Sigma_\nu \left( \mathcal{K}^T \overline{\mathcal{Q}}_{\alpha, t} \mathcal{K} + 2 \mathcal{K}^T \overline{\mathcal{R}}_{\alpha, t} + \overline{\mathcal{S}}_{\alpha, t} \right)\Big). 
\end{split}
\end{align}
The matrices $\widetilde{\mathcal{Q}}_{\alpha, t}$ and $\overline{\mathcal{Q}}_{\alpha, t}$ are given by the recursive relations
\begin{align}
\begin{split}\label{eqn: tildeQRecursion}
	\widetilde{\mathcal{Q}}_{\alpha, t} =& \left(\mathcal{A} + \mathcal{B} \mathcal{L}_{\alpha, t} \right)^T\widetilde{\mathcal{Q}}_{\alpha, t+1}\left(\mathcal{A} + \mathcal{B} \mathcal{L}_{\alpha, t} \right) + \left(\widetilde{\mathcal{C}} + \Psi \mathcal{L}_{\alpha, t} \right)^T R_t \left(\widetilde{\mathcal{C}} + \Psi \mathcal{L}_{\alpha, t} \right),
\end{split}\\
\begin{split}\label{eqn: overlineQRecursion}
	\overline{\mathcal{Q}}_{\alpha, t} =& \left(\mathcal{A} + \mathcal{B} \mathcal{L}_{\alpha, t} \right)^T\overline{\mathcal{Q}}_{\alpha, t+1}\left(\mathcal{A} + \mathcal{B} \mathcal{L}_{\alpha, t} \right) + \mathcal{H}^T Q_t \mathcal{H},
\end{split}
\end{align}
with terminal conditions $\widetilde{\mathcal{Q}}_{\alpha, N+1} = \overline{\mathcal{Q}}_{\alpha, N+1} = 0$. The matrices $\widetilde{\mathcal{R}}_{\alpha, t}, \widetilde{\mathcal{S}}_{\alpha, t}, \overline{\mathcal{R}}_{\alpha, t}, \text{ and } \widetilde{\mathcal{S}}_{\alpha, t}$ are given as
\begin{align}
\begin{split}\label{eqn: tildeR}
	\widetilde{\mathcal{R}}_{\alpha, t} =& \left( \mathcal{A} + \mathcal{B} \mathcal{L}_{\alpha, t} \right)^T \widetilde{\mathcal{Q}}_{\alpha, t+1} \mathcal{B} \mathcal{O}_{\alpha, t} +  \left( \widetilde{\mathcal{C}} + \Psi \mathcal{L}_{\alpha, t} \right)^T R_t \left(I_p + \Psi \mathcal{O}_{\alpha, t}\right),
\end{split}\\
\begin{split}\label{eqn: tildeS}
	\widetilde{\mathcal{S}}_{\alpha, t} =& \mathcal{O}_{\alpha, t}^T \mathcal{B}^T \widetilde{\mathcal{Q}}_{\alpha, t+1} \mathcal{B} \mathcal{O}_{\alpha, t} + \left(I_p + \Psi \mathcal{O}_{\alpha, t}\right)^T R_t \left(I_p + \Psi \mathcal{O}_{\alpha, t}\right),
\end{split} \\
	\overline{\mathcal{R}}_{\alpha, t} =& \left( \mathcal{A} + \mathcal{B} \mathcal{L}_{\alpha, t} \right)^T \overline{\mathcal{Q}}_{\alpha, t+1} \mathcal{B} \mathcal{O}_{\alpha, t}, \label{eqn: overlineR}\\
	\overline{\mathcal{S}}_{\alpha, t} =& \mathcal{O}_{\alpha, t}^T \mathcal{B}^T \overline{\mathcal{Q}}_{\alpha, t+1} \mathcal{B} \mathcal{O}_{\alpha, t}.
\end{align}
\end{theorem}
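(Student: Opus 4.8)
The plan is to treat this as a policy-evaluation problem: the gains $\mathcal{L}_{\alpha,t}$ and $\mathcal{O}_{\alpha,t}$ are fixed by the $J_\alpha$-optimal solution of Theorem~\ref{thm: optimalAttackSolution}, and I evaluate the two separate quadratic functionals $J_d$ and $J_c$ along the resulting closed loop. The mechanism mirrors the computation of $J^*$ at the end of Section~\ref{sect: proofs}, except that there is no minimization over $e_t$ and the two cost components are propagated separately. First I substitute $e_{\alpha,t} = \mathcal{L}_{\alpha,t}\xi_t + \mathcal{O}_{\alpha,t}\nu_t$ from~\eqref{eqn: generalOptimalAttack} into the state recursion~\eqref{eqn: xiDynamics} and the output relation~\eqref{eqn: zetaSystemEquation} to obtain the closed-loop dynamics $\xi_{t+1} = (\mathcal{A} + \mathcal{B}\mathcal{L}_{\alpha,t})\xi_t + \mathcal{B}\mathcal{O}_{\alpha,t}\nu_t + \mathcal{K}\nu_{t+1}$, together with the closed-loop detection statistic $\nu_t^e = (\widetilde{\mathcal{C}} + \Psi\mathcal{L}_{\alpha,t})\xi_t + (I_p + \Psi\mathcal{O}_{\alpha,t})\nu_t$ (reading off the second block of $\zeta_t$ and using $\nu_t = \nu_t^0$ from Lemma~\ref{lem: innovationEquality}) and the control error $\widehat{x}_t - x^* = \mathcal{H}\xi_t$ (the first block of $\zeta_t$).

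Next I run two backward inductions over $t = N, N-1, \dots, 0$, one for each functional. For $J_d$ I posit that the cost-to-go admits the quadratic form $\mathbb{E}\big[\sum_{s=t}^N {\nu_s^e}^T R_s \nu_s^e \,\big|\, \xi_t, \nu_t\big] = \xi_t^T \widetilde{\mathcal{Q}}_{\alpha,t}\xi_t + 2\xi_t^T\widetilde{\mathcal{R}}_{\alpha,t}\nu_t + \nu_t^T\widetilde{\mathcal{S}}_{\alpha,t}\nu_t + \widetilde{\Pi}_t$ and verify the induction step by inserting the closed-loop dynamics into the stage-$(t+1)$ ansatz, adding the current stage cost ${\nu_t^e}^T R_t \nu_t^e$, and taking the conditional expectation over $\nu_{t+1}$. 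Since $\nu_{t+1}$ is zero-mean, independent of $(\xi_t, \nu_t)$, and has covariance $\Sigma_\nu$, every term linear in $\nu_{t+1}$ (in particular the propagated cross term $2\xi_{t+1}^T\widetilde{\mathcal{R}}_{\alpha,t+1}\nu_{t+1}$) drops out, and every term quadratic in $\nu_{t+1}$ contributes only to the constant $\widetilde{\Pi}_t$ through a $\trace(\Sigma_\nu \cdot)$. Matching the coefficients of $\xi_t^T(\cdot)\xi_t$, $\xi_t^T(\cdot)\nu_t$, and $\nu_t^T(\cdot)\nu_t$ then yields exactly the recursions~\eqref{eqn: tildeQRecursion},~\eqref{eqn: tildeR}, and~\eqref{eqn: tildeS}, while the constant increment is $\trace(\Sigma_\nu(\mathcal{K}^T\widetilde{\mathcal{Q}}_{\alpha,t+1}\mathcal{K} + 2\mathcal{K}^T\widetilde{\mathcal{R}}_{\alpha,t+1} + \widetilde{\mathcal{S}}_{\alpha,t+1}))$. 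The terminal condition $\widetilde{\mathcal{Q}}_{\alpha,N+1} = 0$ follows from there being no cost beyond $t = N$. The induction for $J_c$ is identical in structure but uses the stage cost $\xi_t^T\mathcal{H}^T Q_t\mathcal{H}\xi_t$; because this cost carries no dependence on the current innovation $\nu_t$, the $\overline{\mathcal{R}}_{\alpha,t}$ and $\overline{\mathcal{S}}_{\alpha,t}$ terms inherit only the contributions propagated through $\mathcal{B}\mathcal{O}_{\alpha,t}$, giving the simpler forms~\eqref{eqn: overlineR} and the recursion~\eqref{eqn: overlineQRecursion}.

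Finally I evaluate each cost-to-go at $t = 0$ under the unconditional distribution. The key observation is that the attacker's and system's filter updates at $t = 0$ give $\xi_0 = \overline{\xi}_0 + \mathcal{K}\nu_0$, where $\overline{\xi}_0$ depends only on the prediction $\widehat{x}_{0|-1}$ and is therefore independent of the innovation $\nu_0$; moreover $\mathbb{E}[\overline{\xi}_0 \overline{\xi}_0^T] = \Sigma_{\xi_0}^*$. Substituting this decomposition and using $\mathbb{E}[\nu_0] = 0$, $\mathbb{E}[\nu_0\nu_0^T] = \Sigma_\nu$ collapses the quadratic-in-$\xi_0$, cross, and quadratic-in-$\nu_0$ terms at $t = 0$ into $\trace(\Sigma_{\xi_0}^*\widetilde{\mathcal{Q}}_{\alpha,0})$ plus the $t=0$ summand $\trace(\Sigma_\nu(\mathcal{K}^T\widetilde{\mathcal{Q}}_{\alpha,0}\mathcal{K} + 2\mathcal{K}^T\widetilde{\mathcal{R}}_{\alpha,0} + \widetilde{\mathcal{S}}_{\alpha,0}))$, which combines with the accumulated constant $\widetilde{\Pi}_0$ to produce~\eqref{eqn: JDAlpha}. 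The same step applied to the $J_c$ recursion, together with the $\sum_t \trace(\widehat{P}Q_t)$ correction that arises from the decomposition $x_t = \widehat{x}_t + n_t$ (with $n_t \sim \mathcal{N}(0,\widehat{P})$ orthogonal to $\widehat{x}_t$, exactly as in Section~\ref{sect: proofs}), yields~\eqref{eqn: JCAlpha}. I expect the main obstacle to be the bookkeeping in the backward induction --- specifically, verifying that the future cross term $\widetilde{\mathcal{R}}_{\alpha,t+1}\nu_{t+1}$ does not feed back into $\widetilde{\mathcal{R}}_{\alpha,t}$ (so the recursions close in the stated form) and correctly handling the $\xi_0$--$\nu_0$ correlation at the boundary, rather than any single hard inequality or existence argument.
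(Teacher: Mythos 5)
Your proposal is correct and follows essentially the same route as the paper: fix the gains $\mathcal{L}_{\alpha,t},\mathcal{O}_{\alpha,t}$, substitute $x_t=\widehat{x}_t+n_t$ to pull out $\sum_t\trace(\widehat{P}Q_t)$, and evaluate each cost component by a backward recursion on a cost-to-go of the form $\xi_t^T\mathcal{Q}\xi_t+2\xi_t^T\mathcal{R}\nu_t+\nu_t^T\mathcal{S}\nu_t+\Pi_t$, closing with the unconditional expectation over $\widehat{x}_{0|-1}$ and $\nu_0$ to produce $\trace(\Sigma_{\xi_0}^*\,\cdot\,)$ plus the $t=0$ trace term. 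Your handling of the cross term $2\xi_{t+1}^T\widetilde{\mathcal{R}}_{\alpha,t+1}\nu_{t+1}$ (its $\mathcal{K}\nu_{t+1}$ part surviving into the constant as $2\trace(\Sigma_\nu\mathcal{K}^T\widetilde{\mathcal{R}}_{\alpha,t+1})$) matches the paper's $\overline{\Pi}$ recursion, so no gap remains.
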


\begin{proof}[Proof (Theorem~\ref{thm: alphaSweep})] We derive equation~\eqref{eqn: JCAlpha}, the expression for $J_{\alpha, c}$. The derivation of the expression for $J_{\alpha, d}$ follows similarly. Substituting $x_t = \widehat{x}_t + n_t$, as in the proof of Theorem~\ref{thm: optimalAttackSolution}, we have
\begin{equation}
	J_{\alpha, c} = \sum_{t=0}^N \trace\left(\widehat{P}Q_t\right) + \mathbb{E} \left[ \sum_{t=0}^N \xi_t^T \mathcal{H}^T Q_t \mathcal{H} \xi_t \right].
\end{equation}
We evaluate $\overline{J}_{\alpha, c} =  \mathbb{E} \left[ \sum_{t=0}^N \xi_t^T \mathcal{H}^T Q_t \mathcal{H} \xi_t \right]$ recursively backward in time. Define $\overline{J}_{\alpha, c, t}$ to be the cost-to-go function for $\overline{J}_{\alpha, c}$ when the attack is $e_{\alpha, t}$ and the attacker's information is~$\mathcal{I}_t$. 

We begin at time $t=N$, for which we have
\begin{align}
	\overline{J}_{\alpha, c, N} &= \mathbb{E} \left[ \xi_N^T \mathcal{H}^T Q_N \mathcal{H} \xi_N | \mathcal{I}_N \right], \\
		&= \xi_N^T \mathcal{H}^T Q_N \mathcal{H} \xi_N.
\end{align}
Note that, at time $t=N$, the attack $e_N$ does not affect the cost-to-go function, since $\xi_N$ does not depend on $e_N$. Rearranging $\overline{J}_{\alpha, c, N}$, we have
\begin{equation}
\begin{split}
	\overline{J}_{\alpha, c, N} =& \xi_N^T \overline{\mathcal{Q}}_{\alpha, N} \xi_N + 2 \xi_N^T \overline{\mathcal{R}}_{\alpha, N} \nu_N  + \nu_N^T \overline{\mathcal{S}}_{\alpha, N} \nu_N +\overline{\Pi}_{N},
\end{split}
\end{equation}
where $\overline{\mathcal{Q}}_{\alpha, N} = \mathcal{H}^T Q_N \mathcal{H}$, $\overline{\mathcal{R}}_{\alpha, N} = 0$ ($\overline{\mathcal{R}}_{\alpha, N} \in \mathbb{R}^{6n \times p}$), $\overline{\mathcal{S}}_{\alpha, N} = 0$ ($\overline{\mathcal{S}}_{\alpha, N} \in \mathbb{R}^{p \times p}$), and $\overline{\Pi}_N = 0$.

Proceeding to $t = N-1$, we have, following~\eqref{eqn: generalOptimalAttack}, that the attack is \[e_{\alpha, N-1} = \mathcal{L}_{\alpha, N-1} \xi_{N-1} + \mathcal{O}_{\alpha, N-1} \nu_{N-1}.\] The cost-to-go function at $t=N-1$ is
\begin{align}
\begin{split}\label{eqn: JCAlpha1}
	\overline{J}_{\alpha, c, N-1} &= \mathbb{E} \Big[ \xi_{N-1}^T \mathcal{H}^T Q_{N-1}\mathcal{H} \xi_{N-1} + \overline{J}_{\alpha, c, N} \Big| \mathcal{I}_{N-1} \Big].
\end{split}
\end{align}
%Define
%\begin{equation}\label{eqn: overlineXiAlpha}
%	\overline{\xi}_{\alpha, N-1} = \mathcal{A}\xi_{N-1} + \mathcal{B}e_{\alpha, N-1}.
%\end{equation}
Substituting $\overline{\xi}_{\alpha, N-1} = \mathcal{A}\xi_{N-1} + \mathcal{B}e_{\alpha, N-1}$ into~\eqref{eqn: JCAlpha1} and evaluating the conditional expectation, we have
\begin{equation}\label{eqn: JCAlpha2}
\begin{split}
	\overline{J}_{\alpha, c, N-1} &=  \xi_{N-1}^T \mathcal{H}^T Q_{N-1}\mathcal{H} \xi_{N-1} + \overline{\Pi}_{N-1} +  \overline{\xi}_{N-1}^T \overline{\mathcal{Q}}_{\alpha, N} \overline{\xi}_{N-1},
\end{split}
\end{equation}
where
\begin{equation}\label{eqn: overlinePi}
\begin{split}
	\overline{\Pi}_{N-1} =&\overline{\Pi}_N + \trace \Big( \Sigma_\nu \left( \mathcal{K}^T \overline{\mathcal{Q}}_{\alpha, N} \mathcal{K} + 2 \mathcal{K}^T \overline{\mathcal{R}}_{\alpha, N} + \overline{\mathcal{S}}_{\alpha, N} \right)\Big).
\end{split}
\end{equation}
After further substituting for $e_{\alpha, N-1}$ into~\eqref{eqn: JCAlpha2} and performing algebraic manipulations, we have
\begin{equation}\label{eqn: JCAlpha3}
\begin{split}
	\overline{J}_{\alpha, c, N-1} & =\overline{\Pi}_{N-1} + \xi_{N-1}^T \overline{\mathcal{Q}}_{\alpha, N-1} \xi_{N-1} + 2 \xi_{N-1}^T \overline{\mathcal{R}}_{\alpha, N-1} \nu_{N-1} +  \nu_{N-1}^T \overline{\mathcal{S}}_{\alpha, N-1} \nu_{N-1},
\end{split}
\end{equation}
where
\begin{align}
\begin{split}\label{eqn: overlineQRecursionN}
	\overline{\mathcal{Q}}_{\alpha, N-1} =& \left(\mathcal{A} + \mathcal{B} \mathcal{L}_{\alpha, N-1} \right)^T\overline{\mathcal{Q}}_{\alpha, N}\left(\mathcal{A} + \mathcal{B} \mathcal{L}_{\alpha, N-1} \right) + \mathcal{H}^T Q_{N-1} \mathcal{H},
\end{split}\\
\overline{\mathcal{R}}_{\alpha, N-1} =& \left( \mathcal{A} + \mathcal{B} \mathcal{L}_{\alpha, N-1} \right)^T \overline{\mathcal{Q}}_{\alpha, N} \mathcal{B} \mathcal{O}_{\alpha, N-1}, \label{eqn: overlineRN}\\
	\overline{\mathcal{S}}_{\alpha, N-1} =& \mathcal{O}_{\alpha, N-1}^T \mathcal{B}^T \overline{\mathcal{Q}}_{\alpha, N} \mathcal{B} \mathcal{O}_{\alpha, N-1}. \label{eqn: overlineSN}
\end{align}

To complete the proof, we iterate through all $t = N-2, \dots, 0$ and compute $\overline{J}_{\alpha, c, t}$. We find the same relations as described by  equations~\eqref{eqn: overlinePi},~\eqref{eqn: overlineQRecursionN}-\eqref{eqn: overlineSN}, with $t$ replacing $N-1$. The control error cost is given as \[J_{\alpha, c} = \mathbb{E} \left[ \overline{J}_{\alpha, c, 0}\right] + \sum_{t=0}^N \trace \left(\widehat{P} Q_t \right).\]
Computing the expectation in the above expression and expressing the results in terms of the statistics of $\widehat{x}_{0|-1}$ (similar to the procedure for calculating the optimal cost in Section~\ref{sect: proofs}), we derive equation~\eqref{eqn: JCAlpha}.
\end{proof}

Theorem~\ref{thm: alphaSweep} provides explicit expressions for the normalized control error cost and detection statistic cost incurred by an attack sequence\footnote{Equations~\eqref{eqn: JDAlpha} and~\eqref{eqn: JCAlpha} are valid for all attack sequences of the form $e_{\alpha, t} = \mathcal{L}_{\alpha, t} \xi_{t} + \mathcal{O}_{\alpha, t}\nu_t$ (i.e., the matrices $\mathcal{L}_{\alpha, t}$ and $\mathcal{O}_{\alpha, t}$ do not have to come from the optimal attack expressions in Theorem~\ref{thm: optimalAttackSolution}).}. Using Theorem~\ref{thm: alphaSweep}, the attacker can compute $J_{\alpha, d}$ and $J_{\alpha, c}$, the attacker can determine how the parameter $\alpha$, the relative weighting between the control error component and the detection statistic component of the cost function, affects the performance of the attack.  In Section~\ref{sect: example}, we provide a numerical example in which we calculate $J_{\alpha, d}$ and $J_{\alpha, c}$ as a function of $\alpha$. 

\subsection{Asymptotic Weighting Performance}
The previous subsection derives expressions for $J_{\alpha, d}$ and $J_{\alpha, c}$ for arbitrary values of $\alpha > 0$. In this section, we consider the detection statistic cost and control error cost incurred by the attack sequence minimizing $J_\alpha$ as $\alpha$ approaches $0$ and as $\alpha$ approaches $\infty$. Let
\begin{align}
	J_{0, d} = \lim_{\alpha \rightarrow 0^+} J_{\alpha, d}\:, \quad J_{\infty, d} = \lim_{\alpha \rightarrow \infty} J_{\alpha, d},
\end{align}
and let $J_{0, c}$ and $J_{\infty, c}$ be defined similarly. In order to compute $J_{0, d}$, $J_{\infty, d}$, $J_{0, c}$, and $J_{\infty, c}$, we need to compute $e_{0, t}$ and $e_{\infty, t}$. The attack (sequence) $e_{0, t}$ is the attack sequence $e_0, \dots, e_N$ that minimizes $J_d$, and the attack (sequence) $e_{\infty, t}$ is the attack sequence $e_0, \dots, e_N$ that minimizes $J_c$. From a practical perspective, an attacker who chooses a weighting parameter $\alpha$ vanishing to $0$ is one who wishes to minimize the sum of the detection statistic (i.e., an attacker who only wishes to avoid detection), and an attacker who chooses a weighting parameter $\alpha$ approaching $\infty$ is an attacker who wishes to minimize the sum of the control error (i.e., an attacker who only wishes to achieve the control objective and does not care about avoiding detection).

Solving for the attack sequence that minimizes $J_d$ is equivalent to solving for the attack sequence that minimizes $J$ (equation~\eqref{eqn: AttackerLQG1}) with $Q_t = 0$. In this case, we cannot use Theorem~\ref{thm: optimalAttackSolution} directly to solve for the optimal attack sequence since Theorem~\ref{thm: optimalAttackSolution} presupposes that $Q_t, R_t \succ 0$. The same restriction applies to finding the attack sequence that minimizes $J_c$. Instead, we separately solve for the optimizing attack sequence when the weighting parameter $\alpha$ is equal to either $0$ or $\infty$. In equations~\eqref{eqn: JStar},~\eqref{eqn: JDAlpha}, and~\eqref{eqn: JCAlpha}, the term $\Sigma_{\xi_0}^*$ captures the dependence of the optimal cost, detection cost, and normalized control cost, respectively, on the target state $x^*$. Intuitively, a target that is farther away results in a higher optimal cost. That is, if, there are two separate target states $x^*_1$ and $x^*_2$, and, $x^*_2 = cx^*_1$ for some $c > 1$, then the cost associated with target state $x^*_2$ is at least the cost associated with the target state $x^*_1$.

\begin{theorem}[Asymptotically Weighted Optimal Attacks]\label{thm: asymptoticAttack}
	For $\alpha = 0$ and $\alpha = \infty$, the optimal attack strategy has the form
	\begin{equation}\label{eqn: asymptoticAttack}
		e_{\alpha, t} = \mathcal{L}_{\alpha, t} \xi_t + \mathcal{O}_{\alpha, t} \nu_t.
	\end{equation}

	For $\alpha = 0$, and for $t=0, \dots, N$, the matrices $\mathcal{L}_{0, t}$ and $\mathcal{O}_{0, t}$ follow
	\begin{align}
		\begin{split}
			\mathcal{L}_{0, t} =& -\left(\Psi^T R_t \Psi + \mathcal{B}^T \mathcal{V}_{t+1} \mathcal{B} \right)^\dagger \left(\Psi^T R_t \widetilde{\mathcal{C}} + \mathcal{B}^T\mathcal{V}_{t+1} \mathcal{A} \right),
		\end{split}\\
		\mathcal{O}_{0, t} =& -\left(\Psi^T R_t \Psi + \mathcal{B}^T \mathcal{V}_{t+1} \mathcal{B} \right)^\dagger \Psi^T R_t,
	\end{align}
	where $\mathcal{V}_t$ follows the recursion
	\begin{equation}
		\begin{split}
			\mathcal{V}_t =& \widetilde{\mathcal{C}}^T R_t \widetilde{\mathcal{C}} + \mathcal{A}^T \mathcal{V}_{t+1} \mathcal{A} -  \left(\Psi^T R_t \widetilde{\mathcal{C}} + \mathcal{B}^T\mathcal{V}_{t+1} \mathcal{A} \right)^T \left(\Psi^T R_t \Psi + \mathcal{B}^T \mathcal{V}_{t+1} \mathcal{B} \right)^\dagger \times \\ &\left(\Psi^T R_t \widetilde{\mathcal{C}} + \mathcal{B}^T\mathcal{V}_{t+1} \mathcal{A} \right),
		\end{split}
	\end{equation}
	with $\mathcal{V}_{N+1} = 0$. 

	For $\alpha = \infty$, and for $t = 0, \dots, N$, the matrices $\mathcal{L}_{\infty, t}$ and $\mathcal{O}_{0, t}$ follow
	\begin{align}
		\begin{split}
			\mathcal{L}_{\infty, t} =& -\left(\mathcal{B}^T \mathcal{W}_{t+1} \mathcal{B} \right)^\dagger \mathcal{B}^T \mathcal{W}_{t+1} \mathcal{A},
		\end{split}\\
		\mathcal{O}_{\infty, t} =& 0,
	\end{align}
	where $\mathcal{W}_t$ follows the recursion
	\begin{equation}
		\begin{split}\label{eqn: WRecursion}
			\mathcal{W}_t =& \mathcal{H}^T Q_t \mathcal{H} + \mathcal{A}^T \mathcal{W}_{t+1} \mathcal{A} - \mathcal{A}^T \mathcal{W}_{t+1} \mathcal{B} \left(\mathcal{B}^T \mathcal{W}_{t+1} \mathcal{B} \right)^\dagger \mathcal{B}^T \mathcal{W}_{t+1} \mathcal{A},
		\end{split}
	\end{equation}
	with $\mathcal{W}_{N+1} = 0$. 
\end{theorem}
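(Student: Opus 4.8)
The plan is to treat $\alpha = 0$ and $\alpha = \infty$ as two degenerate instances of the optimal control problem solved in Theorem~\ref{thm: optimalAttackSolution} and to re-run its dynamic programming recursion. As noted above, minimizing $J_d$ is minimizing $J$ with $Q_t = 0$, and minimizing $J_c$ is minimizing $J$ with $R_t = 0$. In either case the reduction $x_t = \widehat{x}_t + n_t$ carries over verbatim, leaving the attack-dependent part $\mathbb{E}\left[\sum_{t=0}^N \zeta_t^T \mathcal{F}_t \zeta_t\right]$ with the dynamics~\eqref{eqn: xiDynamics} and output relation~\eqref{eqn: zetaSystemEquation} unchanged; only $\mathcal{F}_t = \diag(Q_t, R_t)$ degenerates. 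Setting $Q_t = 0$ gives $\mathcal{D}^T\mathcal{F}_t\mathcal{D} = \Psi^T R_t \Psi$, $\mathcal{D}^T\mathcal{F}_t\mathcal{C} = \Psi^T R_t \widetilde{\mathcal{C}}$, $\mathcal{D}^T\mathcal{F}_t\mathcal{M} = \Psi^T R_t$, and $\mathcal{C}^T\mathcal{F}_t\mathcal{C} = \widetilde{\mathcal{C}}^T R_t \widetilde{\mathcal{C}}$; setting $R_t = 0$ annihilates every block containing $\mathcal{D}$ or $\mathcal{M}$ and leaves $\mathcal{C}^T\mathcal{F}_t\mathcal{C} = \mathcal{H}^T Q_t \mathcal{H}$. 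Substituting these into the per-step return function from the proof of Theorem~\ref{thm: optimalAttackSolution} will produce exactly the recursion for $\mathcal{V}_t$ (for $\alpha = 0$) and recursion~\eqref{eqn: WRecursion} for $\mathcal{W}_t$ (for $\alpha = \infty$), together with the gains $\mathcal{L}_{\alpha,t}$, $\mathcal{O}_{\alpha,t}$ in the statement, with $\mathcal{V}_{N+1} = \mathcal{W}_{N+1} = 0$ supplying the trivially degenerate terminal return.

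The obstacle is that the per-step coefficient $\mathcal{D}^T\mathcal{F}_t\mathcal{D} + \mathcal{B}^T\mathcal{Q}_{t+1}\mathcal{B}$ is now only positive \emph{semi}definite, so Lemma~\ref{lem: convexitySufficiency} no longer applies and the minimization over $e_t$ cannot be solved by an ordinary inverse. I would resolve this uniformly, and at every time step rather than only the terminal one, by recognizing each per-step minimization as a weighted least-squares problem. With $\mathcal{V}_{t+1} \succeq 0$ I factor $\mathcal{V}_{t+1} = \mathcal{G}^T\mathcal{G}$ and write $R_t = R_t^{1/2} R_t^{1/2}$, so that the quantity to minimize over $e_t$ (for $\alpha = 0$) is $\| \mathcal{E} e_t + f \|^2$, where $\mathcal{E}$ stacks $R_t^{1/2}\Psi$ on top of $\mathcal{G}\mathcal{B}$ and $f$ collects the $\xi_t$- and $\nu_t$-dependent terms. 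A least-squares objective always attains its minimum, and its normal equations $\mathcal{E}^T\mathcal{E}\,e_t = -\mathcal{E}^T f$ are always consistent because $\mathscr{R}(\mathcal{E}^T f) \subseteq \mathscr{R}(\mathcal{E}^T) = \mathscr{R}(\mathcal{E}^T\mathcal{E})$. This single observation replaces the dedicated range-space argument of Lemma~\ref{lem: terminalOptimumExistence}: the minimizer $e_t = -(\mathcal{E}^T\mathcal{E})^\dagger \mathcal{E}^T f$ exists at \emph{every} $t$, and since $\mathcal{E}^T\mathcal{E} = \Psi^T R_t \Psi + \mathcal{B}^T\mathcal{V}_{t+1}\mathcal{B}$, collecting the coefficients of $\xi_t$ and $\nu_t$ in $-(\mathcal{E}^T\mathcal{E})^\dagger\mathcal{E}^T f$ yields precisely $\mathcal{L}_{0,t}$ and $\mathcal{O}_{0,t}$. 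The same construction with $R_t = 0$ drops the first block of $\mathcal{E}$ and all $\nu_t$-dependence, giving $\mathcal{O}_{\infty,t} = 0$ and $\mathcal{L}_{\infty,t} = -(\mathcal{B}^T\mathcal{W}_{t+1}\mathcal{B})^\dagger \mathcal{B}^T\mathcal{W}_{t+1}\mathcal{A}$.

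To close the induction I would verify the propagation of positive semidefiniteness of the value-function matrices, the analogue of Lemma~\ref{lem: QPSD}: completing the square in the least-squares objective expresses the optimal return as $\xi_t^T \mathcal{V}_t \xi_t$ plus terms linear in $\nu_t$ plus a constant, with $\mathcal{V}_t$ a sum of two Gram-type terms of the form $\mathcal{X}_t^T\mathcal{F}_t\mathcal{X}_t + \mathcal{Y}_t^T \mathcal{V}_{t+1}\mathcal{Y}_t$ (as in Lemma~\ref{lem: QPSD}, with pseudoinverses replacing inverses), so $\mathcal{V}_t \succeq 0$ whenever $\mathcal{V}_{t+1}\succeq 0$; starting from $\mathcal{V}_{N+1} = 0$ this keeps the factorization $\mathcal{V}_{t+1} = \mathcal{G}^T\mathcal{G}$ available at the next step down and certifies that the recursion is well-defined throughout. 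The identical argument for $R_t = 0$ gives $\mathcal{W}_t \succeq 0$ and recursion~\eqref{eqn: WRecursion}. The hardest part is exactly this existence-at-every-step issue, which in Theorem~\ref{thm: optimalAttackSolution} was split between the positive-definiteness Lemma~\ref{lem: convexitySufficiency} (interior steps) and the range condition of Lemma~\ref{lem: terminalOptimumExistence} (terminal step); here both are subsumed by the least-squares reformulation, which is robust to the loss of definiteness caused by $Q_t = 0$ or $R_t = 0$.
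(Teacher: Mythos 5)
Your proposal is correct and follows essentially the same route as the paper: dynamic programming on the degenerate cost (with $Q_t=0$ or $R_t=0$), pseudoinverses in place of inverses, existence of the per-step minimizer secured by the range inclusion $\mathscr{R}(\mathcal{B}^T\mathcal{W}_{t+1}\mathcal{A}\xi_t)\subseteq\mathscr{R}(\mathcal{B}^T\mathcal{W}_{t+1}\mathcal{B})$, which in turn rests on an inductive proof that the value matrices stay positive semidefinite. Your least-squares packaging of the consistency argument is the same fact the paper proves in Lemma~\ref{lem: asymptoticIntermediate} via $\mathcal{W}_{t+1}^{1/2}$ and a null-space inclusion, so the two write-ups differ only in presentation.
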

\noindent The main difference between Theorem~\ref{thm: optimalAttackSolution} and Theorem~\ref{thm: asymptoticAttack} is that, when the weighting parameter $\alpha$ is either $0$ or $\infty$, the optimal attack sequence over the time interval $t = 0, \dots, t = N -1$ may not be unique. Before we prove Theorem~\ref{thm: asymptoticAttack}, we require the following intermediate result, the proof of which is found in the appendix. 
\begin{lemma}\label{lem: asymptoticIntermediate}
	For all $\xi_t \in \mathbb{R}^{6n}$, $\mathcal{B}^T \mathcal{W}_{t+1} \mathcal{A} \xi_t \in \mathscr{R} \left( \mathcal{B}^T \mathcal{W}_{t+1} \mathcal{B} \right)$.
\end{lemma}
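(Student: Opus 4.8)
The plan is to reduce the claim to the elementary linear-algebra fact that, for any real matrices $X$ and $Y$ of compatible sizes, $\mathscr{R}\left(X^T Y\right) \subseteq \mathscr{R}\left(X^T\right) = \mathscr{R}\left(X^T X\right)$. To apply this, the key is that $\mathcal{W}_{t+1} \succeq 0$, so that it admits a symmetric positive semidefinite square root $\mathcal{W}_{t+1}^{1/2}$. Setting $X = \mathcal{W}_{t+1}^{1/2}\mathcal{B}$ and $Y = \mathcal{W}_{t+1}^{1/2}\mathcal{A}$, I would write $\mathcal{B}^T \mathcal{W}_{t+1}\mathcal{B} = X^T X$ and $\mathcal{B}^T \mathcal{W}_{t+1}\mathcal{A} = X^T Y$, and then the range inclusion follows immediately: $\mathscr{R}\left(\mathcal{B}^T \mathcal{W}_{t+1}\mathcal{A}\right) = \mathscr{R}\left(X^T Y\right) \subseteq \mathscr{R}\left(X^T\right) = \mathscr{R}\left(X^T X\right) = \mathscr{R}\left(\mathcal{B}^T \mathcal{W}_{t+1}\mathcal{B}\right)$, which is exactly the statement $\mathcal{B}^T \mathcal{W}_{t+1}\mathcal{A}\xi_t \in \mathscr{R}\left(\mathcal{B}^T \mathcal{W}_{t+1}\mathcal{B}\right)$ for every $\xi_t$.

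Thus the entire content of the lemma rests on establishing that $\mathcal{W}_t \succeq 0$ for all $t = N+1, N, \dots, 0$, which I would prove by backward induction. The base case is immediate since $\mathcal{W}_{N+1} = 0 \succeq 0$. For the inductive step, assume $\mathcal{W}_{t+1}\succeq 0$ and examine the recursion~\eqref{eqn: WRecursion}. The term $\mathcal{H}^T Q_t \mathcal{H}$ is positive semidefinite because $Q_t \succeq 0$, so it suffices to show that the remaining part, $\mathcal{A}^T \mathcal{W}_{t+1}\mathcal{A} - \mathcal{A}^T \mathcal{W}_{t+1}\mathcal{B}\left(\mathcal{B}^T \mathcal{W}_{t+1}\mathcal{B}\right)^\dagger \mathcal{B}^T \mathcal{W}_{t+1}\mathcal{A}$, is positive semidefinite.

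The crucial tool here is the projection identity for the pseudoinverse: for any matrix $X$, $X\left(X^T X\right)^\dagger X^T = P$ is the orthogonal projector onto $\mathscr{R}(X)$, a symmetric idempotent with $I - P \succeq 0$. With $X = \mathcal{W}_{t+1}^{1/2}\mathcal{B}$ and $Y = \mathcal{W}_{t+1}^{1/2}\mathcal{A}$ as above, the quadratic difference rewrites as $Y^T Y - Y^T X\left(X^T X\right)^\dagger X^T Y = Y^T (I - P) Y = \left[(I-P)Y\right]^T\left[(I-P)Y\right] \succeq 0$, using $(I-P)^T(I-P) = I-P$. Adding $\mathcal{H}^T Q_t \mathcal{H}\succeq 0$ gives $\mathcal{W}_t \succeq 0$, completing the induction. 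I would then invoke the first paragraph at the single index $t+1$ to conclude.

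I expect the main obstacle to be purely expository rather than conceptual: one must correctly identify and cite the pseudoinverse projection identity and verify that the completion-of-squares produces precisely $Y^T(I-P)Y$, being careful that the square-root factorization is legitimate (which requires the induction hypothesis $\mathcal{W}_{t+1}\succeq 0$ to be in place before the factorization is used). There is no circularity, since the semidefiniteness of $\mathcal{W}_t$ is obtained directly from the projector argument and does not itself invoke the range inclusion; the range inclusion is only extracted afterward as a consequence of $\mathcal{W}_{t+1}\succeq 0$.
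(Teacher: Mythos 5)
Your proposal is correct and follows essentially the same route as the paper: a backward induction establishing $\mathcal{W}_t \succeq 0$ via a completion of squares in the recursion, followed by a square-root factorization $X = \mathcal{W}_{t+1}^{1/2}\mathcal{B}$ to obtain the range inclusion. The only differences are presentational -- you phrase the inductive step through the orthogonal projector $X\left(X^T X\right)^\dagger X^T$ where the paper writes the explicit factored form $\left(\mathcal{A} - \mathcal{B}\left(\mathcal{B}^T\mathcal{W}_{t+1}\mathcal{B}\right)^\dagger\mathcal{B}^T\mathcal{W}_{t+1}\mathcal{A}\right)^T \mathcal{W}_{t+1}\left(\cdot\right)$, and you state the range inclusion directly as $\mathscr{R}\left(X^T Y\right) \subseteq \mathscr{R}\left(X^T\right) = \mathscr{R}\left(X^T X\right)$ where the paper argues via $\mathscr{N}\left(\mathcal{B}^T\mathcal{W}_{t+1}\mathcal{B}\right) \subseteq \mathscr{N}\left(\mathcal{W}_{t+1}\mathcal{B}\right)$ and orthogonal complements -- but these are the same argument.
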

\noindent We now provide the proof for Theorem~\ref{thm: asymptoticAttack} for the case of $\alpha = \infty$; the proof for the case of $\alpha = 0$ follows similarly.

\begin{proof}[Proof (Theorem~\ref{thm: asymptoticAttack})]
	We consider the case of $\alpha = \infty$ and find the sequence of attacks that solves
	\begin{equation}\label{eqn: asymptoticProof1}
		J_{\infty, c}^* = \inf_{e_0, \dots, e_N} \mathbb{E} \left[\sum_{t=0}^N \left(x_t - x^*\right)^T Q_t \left(x_t - x^* \right) \right].
	\end{equation}
	Substituting $x_t = \widehat{x}_t + n_t$ into equation~\eqref{eqn: asymptoticProof1}, we have that an attack sequence that minimizes the cost function~\eqref{eqn: asymptoticProof1} is an attack sequence that minimizes $\overline{J}_{\infty, c} =  \mathbb{E} \left[ \sum_{t=0}^N \xi_t^T \mathcal{H}^T Q_t \mathcal{H} \xi_t \right]$. We resort to dynamic programming to find the minimizing attack sequence.

Define $\overline{J}^*_{\infty, c, t}$ to be the optimal return function at time $t$ for information $\mathcal{I}_t$. At time $t=N$ we have
\begin{align}
	\overline{J}^*_{\infty, c, N} &= \mathbb{E} \left[\xi_N^T \mathcal{H}^T \mathcal{Q}_N \mathcal{H} \xi_N \Big\vert \mathcal{I}_N \right],\\
		&= \xi_N^T \mathcal{W}_N \xi_N + \breve{\Pi}_N,
\end{align}
where $\mathcal{W}_N = \mathcal{H}^T \mathcal{Q}_N \mathcal{H}$, and $\breve{\Pi}_N = 0$. Note that, at time $t=N$, the attack $e_N$ does not affect the optimal return function, so we let $e_N = 0$. 

Proceeding to $t={N-1}$, we have
\begin{align}
	\begin{split}\label{eqn: asymptoticProof2}
		\overline{J}^*_{\infty, c, N-1} &= \inf_{e_{N-1}} \mathbb{E} \Big[\xi_{N-1}^T \mathcal{H}^T \mathcal{Q}_{N-1} \mathcal{H} \xi_{N-1} +  \overline{J}^*_{\infty, c, N} \Big \vert \mathcal{I}_{N-1} \Big].
	\end{split}\\
	\begin{split}\label{eqn: asymptoticProof3}
		&= \breve{\Pi}_{N-1} + \xi_{N-1}^T \mathcal{H}^T \mathcal{Q}_{N-1} \mathcal{H} \xi_{N-1}  + \\ &\qquad\inf_{e_{N-1}} \left(\mathcal{A}\xi_{N-1} + \mathcal{B}e_{N-1}\right)^T \mathcal{W}_N \left(\mathcal{A}\xi_{N-1} + \mathcal{B}e_{N-1}\right),
	\end{split}
\end{align}
where $\breve{\Pi}_{N-1} = \breve{\Pi}_{N} + \trace\left(\Sigma_\nu \mathcal{K}^T \mathcal{W}_N \mathcal{K} \right)$. From equation~\eqref{eqn: asymptoticProof3}, we have that $e_{\infty, N-1}$ is given by
\begin{equation}\label{eqn: asymptoticProof4}
	e_{\infty, N-1} = -\left(\mathcal{B}^T \mathcal{W}_N \mathcal{B} \right)^\dagger \mathcal{B}^T \mathcal{W}_N \mathcal{A} \xi_{N-1}.
\end{equation}
A sufficient condition for the existence of an optimal $e_{\infty, N-1}$ is $\mathcal{B}^T \mathcal{W}_N \mathcal{A} \xi_{N-1} \in \mathscr{R} \left( \mathcal{B}^T \mathcal{W}_N \mathcal{B} \right)$~\cite{Boyd}, which is satisfied according to Lemma~\ref{lem: asymptoticIntermediate}.
The optimal return function is
\begin{equation}\label{eqn: asymptoticProof5}
	\overline{J}^*_{\infty, c, N-1} = \xi_{N-1}^T \mathcal{W}_{N-1} \xi_{N-1} + \breve{\Pi}_{N-1},
\end{equation}
where
\begin{equation}\label{eqn: asymptoticProof6}
\begin{split}
	\mathcal{W}_{N-1} =& \mathcal{H}^T Q_{N-1} \mathcal{H} + \mathcal{A}^T \mathcal{W}_N \mathcal{A} - \mathcal{A}^T \mathcal{W}_N \mathcal{B} \left( \mathcal{B}^T \mathcal{W}_N \mathcal{B} \right)^\dagger \mathcal{B}^T \mathcal{W}_N \mathcal{A}.
\end{split}
\end{equation}
To complete the proof, we iterate through all $t = N-2, \dots, 0$ and compute $e_{\infty, t}$. We find the same relations as described by equations~\eqref{eqn: asymptoticProof4}-\eqref{eqn: asymptoticProof6}, with $t$ replacing $N-1$. This completes the proof for the case of $\alpha = \infty$, and the case of $\alpha = 0$ follows similarly.
\end{proof}

Theorem~\ref{thm: asymptoticAttack} provides expressions for the optimal attack sequence when the weighting parameter $\alpha$ is either $0$ or $\infty$. For both of these choices of $\alpha$, the optimal attack has the form given in equation~\eqref{eqn: generalOptimalAttack}. Thus, combining the results of Theorem~\ref{thm: asymptoticAttack} with the results of Theorem~\ref{thm: alphaSweep}, we find expressions for $J_{0, d}, J_{\infty, d}, J_{0, c},$ and $J_{\infty, c}$. The quantities $J_{0, d}$ and $J_{\infty, c}$ are lower bounds on $J_{d}$ and $J_{c}$ respectively. The quantity $J_{\infty, d}$ is the detection cost associated with an attack strategy that only seeks to minimize the control error cost. The quantity $J_{0, c}$ is the control error cost associated with an attack strategy that only seeks to minimize the detection cost. In Section~\ref{sect: example}, we provide a numerical example in which we calculate these quantities and compare these quantities against $J_{\alpha, d}$ and $J_{\alpha, c}$ for arbitrary values of $\alpha$. 

\section{Numerical Example}\label{sect: example}
We demonstrate the attack strategy using a numerical example of a remotely controlled helicopter under attack. Reference~\cite{Helicopter} provides a state space model of the helicopter's dynamics linearized about an equilibrium point. The dynamics are described in $10$ state variables: forward velocity ($x_{(1)}$), vertical velocity ($x_{(2)}$), pitch rate ($x_{(3)}$), pitch ($x_{(4)}$), longitudinal flapping ($x_{(5)}$), lateral velocity ($x_{(6)}$), roll rate ($x_{(7)}$), yaw rate ($x_{(8)}$), roll angle ($x_{(9)}$), and lateral flapping ($x_{(10)}$). The helicopter has four actuator inputs: the collective ($u_{(1)}$), the longitudinal cyclic ($u_{(2)}$), the tail rotor pedal ($u_{(3)}$), and the lateral cyclic ($u_{(4)}$). Due to space constraints, we do not provide the numerical values of the $A$ and $B$ matrices of the model here and instead refer the reader to~\cite{Helicopter}. In our numerical example, the system has a sensor measuring each of the $10$ state variables, so $C = I_{10}$. We use the following statistical properties in the numerical simulation: $\overline{x}_{t_0} = 0$, $\Sigma_x = 5I_{10}$, $\Sigma_v = 10^{-3} I_{10}$ and $\Sigma_w = 10^{-4} \diag (6, .1, 2, 2, .1, 6, 2, 2, 2, .1),$ where $\diag (a_1, \dots, a_q)$ is a $q$ by $q$ matrix with diagonal elements $a_1, \dots, a_q$. 

The attacker can attack the tail rotor input ($u_{(3)}$) and the lateral cyclic input ($u_{(4)}$) and can change the values of the sensors measuring vertical velocity, lateral velocity, roll rate and lateral flapping. The attacker's target state is
\[ x^* = [\begin{array}{cccccccccc} 0 & 4 & 0 & 0 & 0 & 8.2 & 0 & 0 & 0 & 0 \end{array}]^T.\]
We consider an attacker that only wishes to manipulate the vertical velocity ($x_{(2)}$) and lateral velocity ($x_{(6)}$), so, the attacker's $Q_t$ matrix is given as
\[Q_t = \diag (\begin{array}{cccccccccc} .1,& 3,& .1,& .1,& .1, &4, &.1, &.1, &.1, &.1\end{array}). \]
For the matrix $R_t$, we have  $R_t = \Sigma_\nu ^{-1},$ which can be calculated from the statistical properties of the system using the Kalman filter equations. 

The system starts running at $t = -200$, and the attack occurs over the time period $t = 0$ to $t = 200$. The effect of the attack on the vertical and lateral velocities of the helicopter is shown in Figure~\ref{fig: numericalResults1}, and the effect of the attack on the detection statistic $g_t$ is shown in Figure~\ref{fig: numericalResults2}.
\begin{figure}[h!]
	\centering
	\includegraphics[keepaspectratio = true, scale = .6]{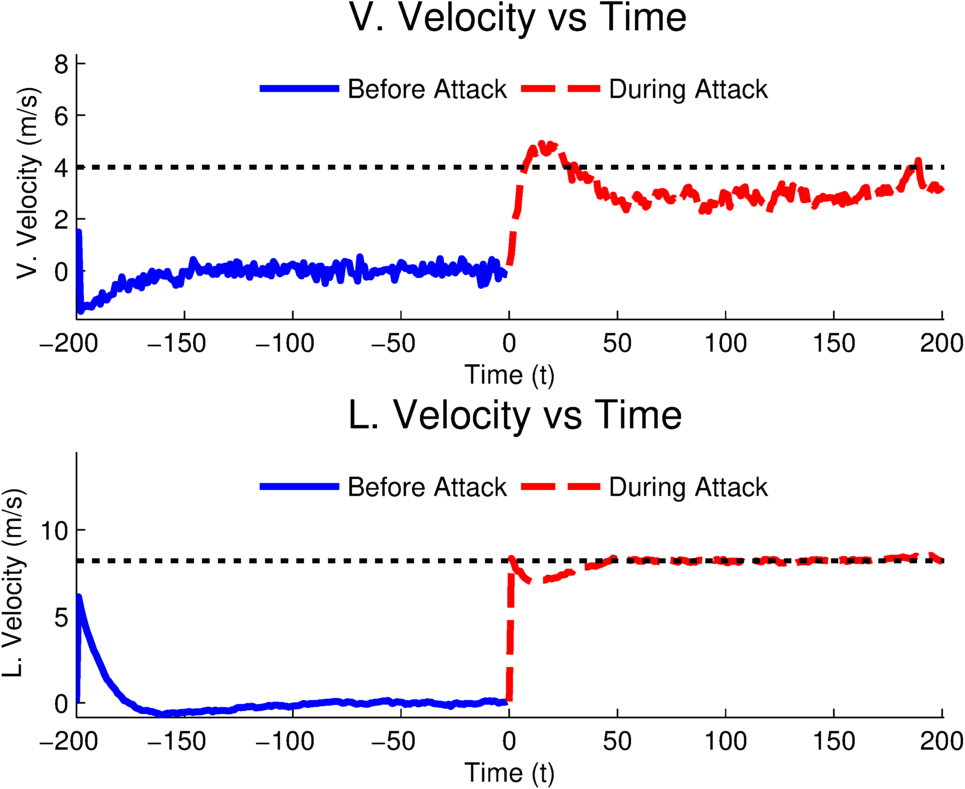}
	\caption{Effect of the attack $e_t$ on the system states. The black dotted line denotes the target state $x^*$. Top: state $x_{(2)}$ (vertical velocity) versus time. Bottom: state $x_{(6)}$ (lateral velocity) versus time.}
	\label{fig: numericalResults1}
\end{figure}
\begin{figure}[h!]
	\centering
	\includegraphics[keepaspectratio = true, scale = .6]{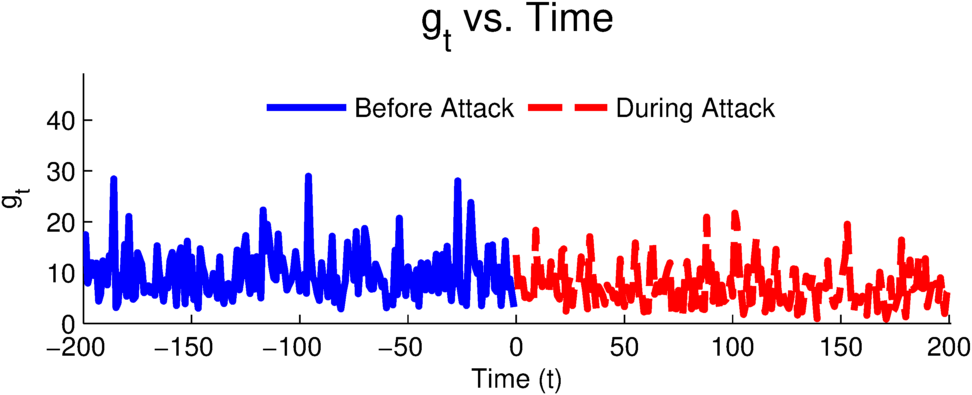}
	\caption{Effect of the attack $e_t$ on the detection statistic $g_t$.}
	\label{fig: numericalResults2}
\end{figure}
As Figure~\ref{fig: numericalResults1} shows, from time $t = -200$ to $t = -1$, the system's LQG controller maintains the equilibrium point for the helicopter. Then, from time $t=0$ to $t=-1$, the attacker successfully uses the strategy given by Theorem~\ref{thm: optimalAttackSolution} (with $\alpha = 1$) to move the system to the desired target state. The attack is effective at accomplishing the control objective even though the system's LQG controller attempts to bring the helicopter back to the equilibrium point. Figure~\ref{fig: numericalResults2} shows that the optimal attack does not cause a noticeable increase in the detection statistic. 

By choosing the relative weighting between the control error component and the detection statistic energy component of the cost function, the attacker changes the performance of the optimal attack. We consider the remotely controlled helicopter under attack when the attacker places weighting parameter $\alpha > 0$ on the control error component of the cost function. First, using Theorem~\ref{thm: optimalAttackSolution}, we compute the optimal attack sequence for values of $\alpha$ ranging from $10^{-6}$ to $10^{6}$. Then, using Theorem~\ref{thm: alphaSweep}, we compute the detection statistic cost $J_d$ and the normalized control error cost $J_c$ for the optimal attack sequence associated with each value of $\alpha$. From Theorem~\ref{thm: asymptoticAttack}, we compute the values of $J_{0, d}, J_{\infty, d}, J_{0, c},$ and $J_{\infty, c}$ for the helicopter under attack.

\begin{figure}[h!]
	\centering
	\includegraphics[keepaspectratio = true, scale = .6]{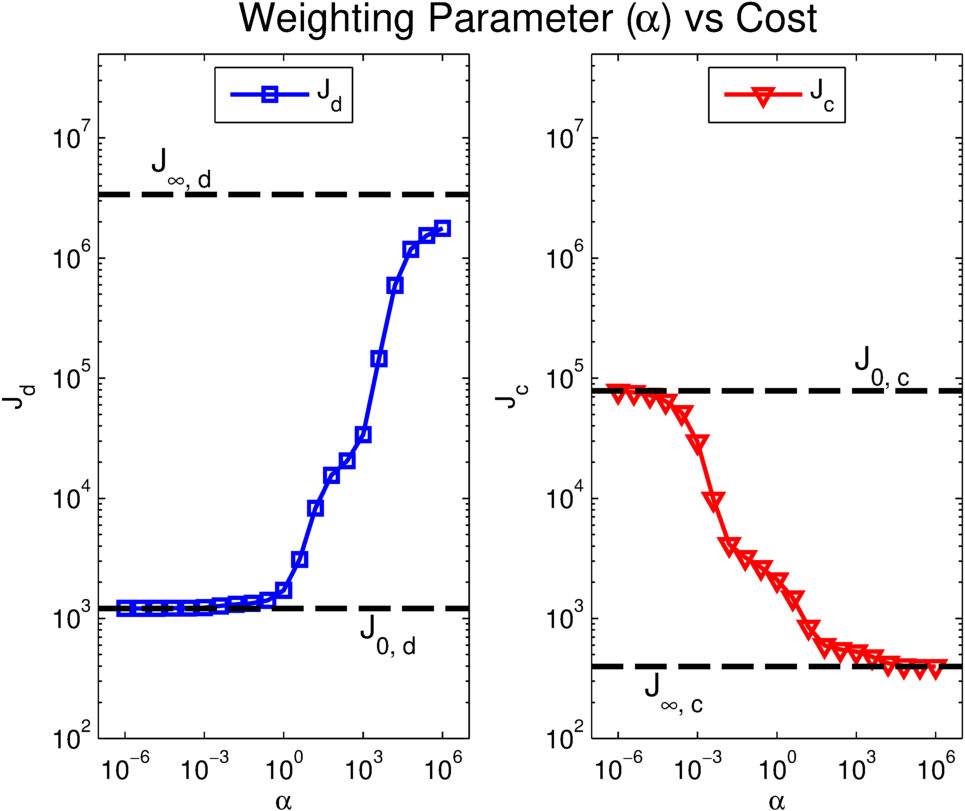}
	\caption{Effect of weighting parameter $\alpha$ on $J_d$ (left) and $J_c$ (right).}
	\label{fig: alphaSweep}
\end{figure}

 Figure~\ref{fig: alphaSweep} shows that, as the attacker increases the weighting on the control error component, the optimal attack has lower control error cost but higher detection cost. As $\alpha$ approaches $0$, the two cost components $J_{\alpha, d}$ and $J_{\alpha, c}$ approach the bounds $J_{0, d}$ and $J_{0, c}$ calculated using Theorem~\ref{thm: alphaSweep} and Theorem~\ref{thm: asymptoticAttack}. Similarly, as $\alpha$ approaches $\infty$, the two cost components $J_{\alpha, d}$ and $J_{\alpha, c}$ approach $J_{\infty, d}$ and $J_{\infty, c}$. From a practical perspective, Theorem~\ref{thm: alphaSweep} tells an attacker how the weighting parameter $\alpha$ affects the detection statistic and the control error. This allows the attacker to choose a value of $\alpha$ that best suits his or her goals.

\section{Conclusion}\label{sect: conclusion}
In this paper we have studied an attack strategy against a cyber-physical system in which an attacker wishes to move the system to a target state while avoiding the system's $\chi^2$ attack detector. We have formulated a quadratic cost function that captures the attacker's objectives; the cost function is the sum of components penalizing the distance from the target state and the energy of the system's detection statistic, respectively. Using dynamic programming, we have found an optimal sequence of attacks, which is a linear feedback of the attacker's state estimate, that minimizes this cost function. Moreover, we have derived expressions for the control error component and the detection statistic component of the cost as a function of a cost weighting parameter chosen by the attacker. Finally, we have provided a numerical example that demonstrates the attack strategy and have shown a trade-off between control performance and detection avoidance. Future work includes determining optimal attack strategies for a less powerful attacker, e.g., an attacker who does not exactly know the system model or an attacker who does not exactly know the sensor output. 
\appendices
\section{Proof of Lemma~\ref{lem: eAdmissable}}\label{sect: proofEAdmissable}
\begin{proof}
By definition, (equation~\eqref{eqn: attackerFilterClarify}), we have $\widehat{x}_t = \mathbb{E} \left[ x_t \vert \mathcal{I}_t \right]$, so $\widehat{x}_t$ can be perfectly obtained from $\mathcal{I}_t$. Similarly, from equation~\eqref{eqn: attackerInnovationClarify}, we have $ \nu_t = \widetilde{y}_t - C \mathbb{E} \left[ x_{t-1} \vert \left\{\mathcal{I}_{t-1}, e_{t-1} \right\} \right]$, which means that $\nu_t$ can be perfectly obtained from $\left\{\mathcal{I}_{t-1}, e_{t-1}, \widetilde{y}_t \right\} = \mathcal{I}_t$.

From the dynamical system in equation~\eqref{eqn: eEpsilonSystem}, we know that
\begin{equation}
	\theta_t = \widehat{\mathcal{A}}^t \theta_0 + \sum_{j = 0}^{t-1} \widehat{\mathcal{A}}^{t-1-j}\widehat{\mathcal{B}}e_j,
\end{equation}
where $\theta_0 = 0$. From equation~\eqref{eqn: attackerInformation} we have
\[ \left\{e_0, e_1, \dots, e_{t-1} \right\} \subset \mathcal{I}_t, \]
so the information set $\mathcal{I}_t$ exactly determines $\theta_t$. 

Since one can run a Kalman filter, one can also run a Kalman filter while ignoring the input $e_t$ (i.e., perform Kalman filtering treating all inputs as $e_t = 0$). This process computes $\widehat{x}_t^e$, the estimate produced by the system's Kalman filter. From the Kalman filtering equations, we know that
\begin{equation}\label{eqn: eAdmissableProof1}
	\widehat{x}_t^e = \left(A+BL\right)\widehat{x}_{t-1}^e + K \left(y_t^e - C(A+BL) \widehat{x}_{t-1}^e \right). 
\end{equation}
Substituting equations~\eqref{eqn: estimationBias} and~\eqref{eqn: eEpsilonSystem} into the left hand side of~\eqref{eqn: eAdmissableProof1} and performing algebraic manipulation, we have
%\begin{equation}
%\begin{split}
%	\widehat{x}_t^0 + \Omega \theta_t + K\Psi e_t & = (A+BL) \widehat{x}_{t-1}^e + \\
%		&\quad K \left(y_t^e - C(A+BL) \widehat{x}_{t-1}^e \right),
%\end{split}
%\end{equation}
%which means that
\begin{equation}\label{eqn: eAdmissableProof2}
	\widehat{x}_t^0 = (A+BL) \widehat {x}_{t-1}^e - \Omega \theta_t + \left(\widetilde{y}_t - C(A+BL) \widehat{x}_{t-1}^e \right),
\end{equation}
where $\widehat{x}_{t-1}^e$ is exactly determined given $\widetilde{y}_0, \dots, \widetilde{y}_{t-1}$ and $e_0, \dots, e_{t-1}$, which are contained in $\mathcal{I}_t$. In addition, we have $\widetilde{y}_{t} \in \mathcal{I}_t$. By equation~\eqref{eqn: eAdmissableProof2}, the information set $\mathcal{I}_t$ exactly determines $\widehat{x}_{t}^0$. 
\end{proof}

\section{Proof of Lemma~\ref{lem: QPSD}}
\begin{proof}
We resort to induction.

\underline{Base Case}: We show that $\mathcal{Q}_N \succeq 0$. By algebraic manipulation on equation~\eqref{eqn: QRTerminalConditions}, we have
\begin{align}%\label{eqn: qNDefExpanded}
%	\mathcal{Q}_N &= \mathcal{C}^T \mathcal{F} \mathcal{C} - \left(\mathcal{D}^T\mathcal{F}\mathcal{C}\right)^T\left(\mathcal{D}^T\mathcal{F}\mathcal{D}\right)^\dagger \left(\mathcal{D}^T\mathcal{F}\mathcal{C}\right), \\
\begin{split}\label{eqn: QNPSD}
	\mathcal{Q}_N & = \Big(\mathcal{C} - \mathcal{D}\left(\mathcal{D}^T\mathcal{F}_N\mathcal{D}\right)^\dagger\left(\mathcal{D}^T\mathcal{F}_N\mathcal{C}\right)\Big)^T\mathcal{F}_N \Big(\mathcal{C} - \mathcal{D}\left(\mathcal{D}^T\mathcal{F}_N\mathcal{D}\right)^\dagger\left(\mathcal{D}^T\mathcal{F}_N\mathcal{C}\right)\Big)
\end{split}
\end{align}
By definition, $\mathcal{F}_N \succ 0$, which means that, by equation~\eqref{eqn: QNPSD}, $\mathcal{Q}_N~\succeq~0$. 

\underline{Induction Step}: In the induction step, we assume that $\mathcal{Q}_{t+1} \succeq 0$ and show that $\mathcal{Q}_t \succeq 0$. % By algebraic manipulation of equation~\eqref{eqn: qtDef}, we have
The equation
\begin{align}
%\begin{split}\label{eqn: qtPSDProofBegin}
%	&\mathcal{Q}_t = \mathcal{C}^T\mathcal{F}_t\mathcal{C} + \mathcal{A}^T\mathcal{Q}_{t+1}\mathcal{A} - \\
%		 &\:\Big(\left(\mathcal{D}^T \mathcal{F}_t \mathcal{C} + \mathcal{B}^T\mathcal{Q}_{t+1}\mathcal{A}\right)^T\left(\mathcal{D}^T \mathcal{F}_t \mathcal{D} + \mathcal{B}^T\mathcal{Q}_{t+1}\mathcal{B}\right)^{-1}\\
%		&\quad\times \left(\mathcal{D}^T \mathcal{F}_t \mathcal{C} + \mathcal{B}^T\mathcal{Q}_{t+1}\mathcal{A}\right)\Big),
%\end{split} \\
%\begin{split}
%	&=  \mathcal{C}^T\mathcal{F}_t\mathcal{C} + \mathcal{A}^T\mathcal{Q}_{t+1}\mathcal{A} -  2\times\\
%	&\:\Big(\left(\mathcal{D}^T \mathcal{F}_t \mathcal{C} + \mathcal{B}^T\mathcal{Q}_{t+1}\mathcal{A}\right)^T\left(\mathcal{D}^T \mathcal{F}_t \mathcal{D} + \mathcal{B}^T\mathcal{Q}_{t+1}\mathcal{B}\right)^{-1}\\
%		&\quad\times \left(\mathcal{D}^T \mathcal{F}_t \mathcal{C} + \mathcal{B}^T\mathcal{Q}_{t+1}\mathcal{A}\right)\Big) +\\
%	 &\:\Big(\left(\mathcal{D}^T \mathcal{F}_t \mathcal{C} + \mathcal{B}^T\mathcal{Q}_{t+1}\mathcal{A}\right)^T\left(\mathcal{D}^T \mathcal{F}_t \mathcal{D} + \mathcal{B}^T\mathcal{Q}_{t+1}\mathcal{B}\right)^{-1}\\
%		&\quad\times \left(\mathcal{D}^T \mathcal{F}_t \mathcal{C} + \mathcal{B}^T\mathcal{Q}_{t+1}\mathcal{A}\right)\Big),
%\end{split}\\
\begin{split}\label{eqn: qtPSDProof}
	\mathcal{Q}_t &= \mathcal{X}^T_t \mathcal{F}_t\mathcal{X}_t + \mathcal{Y}_t^T\mathcal{Q}_{t+1}\mathcal{Y}_t,
\end{split}
\end{align}
follows from algebraic manipulation of~\eqref{eqn: qtDef}. 
By definition, $\mathcal{F}_t\succ 0$, and, by the induction hypothesis, $\mathcal{Q}_{t+1} \succeq 0$. Thus, by equation~\eqref{eqn: qtPSDProof}, $\mathcal{Q}_t \succeq 0$. 
\end{proof}

\section{Proof of Lemma~\ref{lem: convexitySufficiency}}
\begin{proof}
We resort to induction.

\underline{Base Case}: In the base case, we show that $\mathcal{D}^T \mathcal{F}_{N-1} \mathcal{D} + \mathcal{B}^T \mathcal{Q}_N \mathcal{B}$ is positive definite. We first show that there exists $\mathcal{T}_N \succeq 0$ such that 
\begin{equation}\label{eqn: lem3BCFirstStep}
	\mathcal{Q}_N = \mathcal{H}^T Q_N \mathcal{H} + \mathcal{T}_N. 
\end{equation}
From the structure of $\mathcal{C}$ and $\mathcal{F}_t$, we have, for all $t = 0, \dots, N$, 
\begin{equation}\label{eqn: lemma3Proof1}
	\mathcal{C}^T \mathcal{F}_t \mathcal{C} = \mathcal{H}^T Q_t \mathcal{H} + \widetilde{\mathcal{C}}^T R_t \widetilde{\mathcal{C}}.
\end{equation}
Substituting~\eqref{eqn: lemma3Proof1} into~\eqref{eqn: QRTerminalConditions} and performing algebraic manipulations, we have
\begin{equation}
	\mathcal{Q}_N = \mathcal{H}^T Q_N \mathcal{H} + \widetilde{\mathcal{X}}_N^T R_N \widetilde{\mathcal{X}}_N,
\end{equation}
where
\begin{equation}
	\widetilde{\mathcal{X}}_N = \widetilde{\mathcal{C}} - \Psi \left(\Psi^T R_N \Psi \right)^\dagger \Psi^T R_N \widetilde{\mathcal{C}}.
\end{equation}
Since $R_N \succ 0$, $\mathcal{T}_N = \widetilde{\mathcal{X}}_N^T R_N \widetilde{\mathcal{X}}_N \succeq 0$. 

Proceeding, we note that \[\mathcal{B}^T \mathcal{H}^T = \left[\begin{array}{cccc} \Gamma + BLK\Psi & 0 & 0 & 0 \end{array}\right],\] which means that, for all $t = 0, \dots, N$,
\begin{equation}\label{eqn: lemma3Proof2}
	\mathcal{B}^T \mathcal{H}^T Q_t \mathcal{HB} = \left( \Gamma + BLK\Psi \right)^T Q_t \left(\Gamma + BLK\Psi\right).
\end{equation}
Then, substituting for~\eqref{eqn: lem3BCFirstStep},~\eqref{eqn: lemma3Proof2}, and the definitions of $\mathcal{B}, \mathcal{D}$, and $\mathcal{F}_{N-1}$, we have
\begin{equation}
	\mathcal{D}^T \mathcal{F}_{N-1} \mathcal{D} + \mathcal{B}^T \mathcal{Q}_N \mathcal{B} = \widetilde{\mathcal{B}}^T \mathcal{F}_{N-1} \widetilde{\mathcal{B}} + \mathcal{B}^T \mathcal{T}_N \mathcal{B},
\end{equation}
where $\widetilde{\mathcal{B}} = \left[\begin{array}{cc} \left(\Gamma + BLK\Psi\right)^T & \Psi^T \end{array} \right]^T$. We now show that $\widetilde{\mathcal{B}}$ has full column rank. By contradiction, suppose it does not have full column rank. Then $\exists \mu \neq 0$ such that
\begin{equation}\label{eqn: lemma3Proof3}
	\left[\begin{array}{c} \Gamma + BLK\Psi \\  \Psi \end{array} \right]\mu = 0. 
\end{equation}
Equation~\eqref{eqn: lemma3Proof3} implies that, for some $\mu \neq 0$, $\left[\begin{array}{c} \Gamma \\ \Psi \end{array} \right] \mu = 0$, which contradicts  Assumption~\ref{ass: GammaPsiInjective}. Since $\widetilde{\mathcal{B}}$ has full column rank, $\mathcal{F}_{N-1} \succ 0$ (by definition), and $\mathcal{T}_N \succeq 0$, equation~\eqref{eqn: lemma3Proof2} shows that $\mathcal{D}^T \mathcal{F}_{N-1} \mathcal{D} + \mathcal{B}^T \mathcal{Q}_N \mathcal{B} \succ 0$. 

\underline{Induction Step}:
In the induction step, we assume that $ \mathcal{D}^T \mathcal{F}_t \mathcal{D} + \mathcal{B}^T \mathcal{Q}_{t+1} \mathcal{B} \succ 0$ and show that $ \mathcal{D}^T \mathcal{F}_{t-1} \mathcal{D} + \mathcal{B}^T \mathcal{Q}_{t} \mathcal{B} \succ 0$. We first show that there exists $\mathcal{T}_t \succ 0$ such that
\begin{equation}\label{eqn: lem3ISFirstStep}
	\mathcal{Q}_t = \mathcal{H}^T Q_t \mathcal{H} + \mathcal{T}_t. 
\end{equation}
From equation~\eqref{eqn: qtPSDProof}, we have $\mathcal{Q}_{t} = \mathcal{X}_t^T \mathcal{F}_t\mathcal{X}_t + \mathcal{Y}_t^T\mathcal{Q}_{t+1}\mathcal{Y}_t$. Substituting equation~\eqref{eqn: lemma3Proof1} into~\eqref{eqn: qtPSDProof} and performing algebraic manipulations, we have
\begin{equation}
	\mathcal{Q}_t = \mathcal{H}^T Q_t \mathcal{H} + \widetilde{\mathcal{X}}_t^T R_t \widetilde{\mathcal{X}}_t + \mathcal{Y}_t^T \mathcal{Q}_{t+1} \mathcal{Y}_t,
\end{equation}
where
\begin{equation}
\begin{split}
	\widetilde{\mathcal{X}}_t =& \widetilde{\mathcal{C}} - \Psi \left( \mathcal{D}^T \mathcal{F}_t \mathcal{D} + \mathcal{B}^T \mathcal{Q}_{t+1} \mathcal{B} \right)^{-1} \left(\Psi^T R_t \widetilde{\mathcal{C}} + \mathcal{B}^T \mathcal{Q}_{t+1} \mathcal{A} \right).
\end{split}
\end{equation}
By Lemma~\ref{lem: QPSD}, $\mathcal{Q}_{t+1} \succeq 0$ for $t = N-1, \dots, 0$, and, by definition, $R_t\succ 0$. Thus, $\mathcal{T}_t = \widetilde{\mathcal{X}}_t R_t \widetilde{\mathcal{X}}_t + \mathcal{Y}_t^T \mathcal{Q}_{t+1} \mathcal{Y}_t \succeq 0$. Then, substituting for~\eqref{eqn: lemma3Proof2},~\eqref{eqn: lem3ISFirstStep}, and the definitions of $\mathcal{B}, \mathcal{D},$ and $\mathcal{F}_{t-1}$, we have
\begin{equation}\label{eqn: lem3Proof4}
	\mathcal{D}^T \mathcal{F}_{t-1} \mathcal{D} + \mathcal{B}^T \mathcal{Q}_{t} \mathcal{B} = \widetilde{\mathcal{B}}^T \mathcal{F}_{t-1} \widetilde{\mathcal{B}} + \mathcal{B}^T \mathcal{T}_t \mathcal{B},
\end{equation}
which shows that $\mathcal{D}^T \mathcal{F}_{t-1} \mathcal{D} + \mathcal{B}^T \mathcal{Q}_{t} \mathcal{B} \succ 0$. 
\end{proof}

\section{Proof of Lemma~\ref{lem: terminalOptimumExistence}}
\begin{proof}
We show that $\mathscr{N}\left(\mathcal{D}^T \mathcal{F}_N \mathcal{D}\right) \subseteq \mathscr{N}\left(\mathcal{F}_N\mathcal{D}\right)$. Note that $\mathcal{D}^T\mathcal{F}_N \mathcal{D} = \Psi^T R_N \Psi$, and $\mathscr{N} \left(\mathcal{F}_N \mathcal{D} \right) = \mathscr{N} \left(R_N \Psi \right)$.  Let $\mu \in \mathscr{N}\left(\Psi^T R_N \Psi\right)$, and, by contradiction, suppose $\mu \notin \mathscr{N}\left(R_N\Psi\right)$. Then $R_N\Psi\mu \neq 0$, which means that $\Psi \mu \neq 0$. Since $R_N \succ 0$ and $\Psi \mu \neq 0$, we have \[\mu^T\Psi^T R_N \Psi \mu \neq 0,\]
which means that \[\Psi^T R_N \Psi \mu \neq 0.\]
This is a contradiction since $\mu \in \mathscr{N}\left(\Psi^T R_N\Psi\right)$. Thus, we have $\mathscr{N}\left(\Psi^T R_N \Psi\right) \subseteq \mathscr{N}\left(R_N\Psi\right)$.

For any matrix $M$, $\mathscr{N}\left(M\right) = \mathscr{R}^\perp \left(M^T\right)$, where $\mathscr{R}^\perp$ denotes the orthogonal range space of a matrix. Thus, we have $\mathscr{R}^\perp\left(\mathcal{D}^T \mathcal{F}_N \mathcal{D} \right) \subseteq \mathscr{R}^\perp \left(\mathcal{D}^T \mathcal{F}_N\right)$, which means that $\mathscr{R}\left(\mathcal{D}^T \mathcal{F}_N \right) \subseteq \mathscr{R}\left(\mathcal{D}^T\mathcal{F}_N\mathcal{D}\right)$. By construction, $\mathcal{D}^T \mathcal{F}_N \left(\mathcal{C}\xi_N + \mathcal{M} \nu_N\right)$ belongs to $\mathscr{R}\left(\mathcal{D}^T \mathcal{F}_N\right)$, so it must also belong to $\mathscr{R}\left(\mathcal{D}^T \mathcal{F}_N \mathcal{D}\right)$. 
\end{proof}

\section{Proof of Lemma~\ref{lem: innovationEquality}}\label{sect: proofInnovationEquality}
From equation~\eqref{eqn: attackerFilter}, we have that the attacker's innovation is
\begin{equation}
	\nu_t = \widetilde{y}_t - C\left(A \widehat{x}_{t-1} + BL\left(\widehat{x}_{t-1}^0 + \omega_t\right) + \Gamma e_{t-1} \right). \label{eqn: innovationEquality5}
\end{equation}
By definition, the system's innovation when there is no attack is
\begin{equation}
	\nu_t^0 = y_t^0 - C\left( A \widehat{x}_{t-1}^0  + BL \widehat{x}_{t-1}^0 \right). 
\end{equation}
We compute $\widetilde{y}_t$ using the system in equation~\eqref{eqn: ssModel1}:
\begin{align}
%\begin{split}
%	\widetilde{y}_t &= CA^t x_0 + v_t+ \\
% 		& \qquad \sum_{j= 0}^{t-1} CA^{t-1-j} \left( BL \left( \widehat{x}_j^0 + \omega_{j+1} \right) + w_j + \Gamma e_j \right), 
%\end{split}\\
	\widetilde{y}_t = y_t^0 + \sum_{j = 0}^{t-1} CA^{t-1-j} \left( \Gamma e_j + \omega_{j+1}\right). \label{eqn: innovationEquality4}
\end{align}
From the Kalman Filtering equations for both the system and attacker, we have
\begin{align}
	\widehat{x}_{t-1}^0 &= A^{t-1} \widehat{x}_0^0 + \sum_{j = 0}^{t-2} A^{t-2-j} \left( BL \widehat{x}_j ^0 + K\nu_{j+1}^0 \right), \label{eqn: innovationEquality1} \\
	\begin{split}
		\widehat{x}_{t-1} &= A^{t-1} \widehat{x}_0 +  \sum_{j = 0}^{t-2} A^{t-2-j} \Big( BL \left(\widehat{x}_j^0 + \omega_{j+1} \right)+ \label{eqn: innovationEquality2} K\nu_{j+1} + \Gamma e_j \Big). 
	\end{split}
\end{align}
At time $t=0$, the attack $e_0$ does not affect the internal state of the system $x_0$, and the attacker Kalman filter uses $\widetilde{y}_0$ to compute $\widehat{x}_0$. Since $\widetilde{y}_0$ is not affected by the attack $e_0$ (i.e., $\widetilde{y}_0 = y_0^0$), we have $\widehat{x}_0 = \widehat{x}_0^0$. Following equations~\eqref{eqn: innovationEquality1} and~\eqref{eqn: innovationEquality2}, we have
\begin{align}
\begin{split}\label{eqn: innovationEquality3}
	\widehat{x}_{t-1} =& \widehat{x}_{t-1}^0 + \sum_{j = 0}^{t-2} A^{t-2-j} \Big(\Gamma e_j + BL \omega_{j+1} + K \left( \nu_{j+1} - \nu_{j+1}^0 \right) \Big). 
\end{split}
\end{align}

Substituting equations~\eqref{eqn: innovationEquality4} and~\eqref{eqn: innovationEquality3} into equation~\eqref{eqn: innovationEquality5}, we have, after algebraic manipulation, 
\begin{equation}\label{eqn: innovationEquality6}
	\nu_t = \nu_t^0 + \sum_{j=0}^{t-2} CA^{t-2-j}K \left( \nu_{j+1} - \nu^0_{j+1} \right). 
\end{equation}
At time $t = 0$, both the attacker's and system's Kalman filter have the same prediction $\widehat{x}_{0|-1}$ because $e_t = 0$ for $t < 0$. Since $\widetilde{y}_0 = y_0^0$, we have $\nu_0 = \widetilde{y}_0 - C\widehat{x}_{0|-1} = \nu_0^0$. Changing the index of summation in equation~\eqref{eqn: innovationEquality6} and using the fact that $\nu_0 = \nu_0^0$, we have
\begin{equation}\label{eqn: innovationEquality7}
	\nu_t = \nu_t^0 + \sum_{j=0}^{t-1} CA^{t-1-j}K \left(\nu_{j} - \nu^0_{j} \right). 
\end{equation}
We now resort to induction to show that $\nu_t = \nu_t^0$. In the base case, we have already shown that $\nu_0 = \nu_0^0$. In the induction step, we assume that $\nu_j = \nu_j^0$ for $j = 0, \dots, t-1$. Then, by equation~\eqref{eqn: innovationEquality7}, we have $\nu_t = \nu_t^0$. 

\section{Proof of Lemma~\ref{lem: asymptoticIntermediate}}\label{sect: asymptoticLemma}
We first show that $\mathcal{W}_{t} \succeq  0$ for $t = 0, \dots, N+1$. We resort to induction.

\underline{Base Case}: In the base case, we have $\mathcal{W}_{N+1} = 0 \succeq 0$.
  
\underline{Induction Step}: In the induction step, we assume that $\mathcal{W}_{t+1} \succeq 0$ and show that $\mathcal{W}_t \succeq 0$. By~\eqref{eqn: WRecursion}, we have
	\begin{align}
		\begin{split}\label{eqn: WRecursion2}
			\mathcal{W}_t &= \mathcal{H}^T Q_t \mathcal{H} + \mathcal{A}^T \mathcal{W}_{t+1} \mathcal{A} - \mathcal{A}^T \mathcal{W}_{t+1} \mathcal{B} \left(\mathcal{B}^T \mathcal{W}_{t+1} \mathcal{B} \right)^\dagger \mathcal{B}^T \mathcal{W}_{t+1} \mathcal{A},
		\end{split}\\
		\begin{split}\label{eqn: WRecursion3}
			&= \mathcal{H}^T Q_t \mathcal{H} + \left(\mathcal{A} - \mathcal{B} \left(\mathcal{B}^T \mathcal{W}_{t+1}\mathcal{B}\right)^\dagger\mathcal{B}^T \mathcal{W}_{t+1} \mathcal{A} \right)^T\mathcal{W}_{t+1} \times \\ &\quad \left(\mathcal{A} - \mathcal{B} \left(\mathcal{B}^T \mathcal{W}_{t+1}\mathcal{B}\right)^\dagger\mathcal{B}^T \mathcal{W}_{t+1} \mathcal{A} \right)^T.
		\end{split}
	\end{align}
Since $Q_t \succ 0$ by definition and $\mathcal{W}_{t+1} \succeq 0$ by the induction hypothesis, we have $\mathcal{W}_t \succeq 0$. 

We now prove Lemma~\ref{lem: asymptoticIntermediate} by showing that $\mathscr{N}\left(\mathcal{B}^T\mathcal{W}_{t+1} \mathcal{B}\right) \subseteq \mathscr{N}\left(\mathcal{W}_{t+1} \mathcal{B} \right)$. Let \[\mu \in \mathscr{N}\left(\mathcal{B}^T\mathcal{W}_{t+1} \mathcal{B}\right).\] Then, we have
\begin{align}
	\mu^T \mathcal{B}^T \mathcal{W}_{t+1} \mathcal{B} \mu &= 0,\\
	\left\lVert \mathcal{W}_{t+1}^{\frac{1}{2}} \mathcal{B} \mu \right\rVert_2^2 &=0. \label{eqn: asymptoticIntermediateProof1}
\end{align}
The matrix $\mathcal{W}_{t+1}^{\frac{1}{2}}$ exists since $\mathcal{W}_{t+1} \succeq 0$. From the non-negativity property of norms, equation~\eqref{eqn: asymptoticIntermediateProof1} gives
\begin{equation}
	\mathcal{W}_{t+1}^{\frac{1}{2}} \mathcal{B}\mu = 0,
\end{equation}
which means $\mathcal{W}_{t+1} \mathcal{B} \mu = 0$ and $\mu \in \mathscr{N}\left(\mathcal{W}_{t+1} \mathcal{B} \right)$. 

For any matrix $M$, $\mathscr{N}(M) = \mathscr{R}\left(M^T\right)$, where $\mathscr{R}^\perp$ denotes the orthogonal space of a matrix. Thus, we have $\mathscr{R}^\perp\left(\mathcal{B}^T \mathcal{W}_{t+1} \mathcal{B}\right) \subseteq \mathscr{R}^\perp \left(\mathcal{B}^T \mathcal{W}_{t+1} \right)$, which means that $\mathscr{R} \left(\mathcal{B}^T \mathcal{W}_{t+1} \right) \subseteq \mathscr{R} \left(\mathcal{B}^T \mathcal{W}_{t+1} \mathcal{B}\right)$. By construction, for any $\xi_t \in \mathbb{R}^{6n}$, $\mathcal{B}^T \mathcal{W}_{t+1} \mathcal{A} \xi_t \in \mathscr{R}\left(\mathcal{B}^T \mathcal{W}_{t+1} \right)$, so we have $\mathcal{B}^T \mathcal{W}_{t+1} \mathcal{A} \xi_t \in \mathscr{R}\left(\mathcal{B}^T \mathcal{W}_{t+1}\mathcal{B} \right).$
\bibliography{References}

% Generated by IEEEtran.bst, version: 1.14 (2015/08/26)
\begin{thebibliography}{10}
\providecommand{\url}[1]{#1}
\csname url@samestyle\endcsname
\providecommand{\newblock}{\relax}
\providecommand{\bibinfo}[2]{#2}
\providecommand{\BIBentrySTDinterwordspacing}{\spaceskip=0pt\relax}
\providecommand{\BIBentryALTinterwordstretchfactor}{4}
\providecommand{\BIBentryALTinterwordspacing}{\spaceskip=\fontdimen2\font plus
\BIBentryALTinterwordstretchfactor\fontdimen3\font minus
  \fontdimen4\font\relax}
\providecommand{\BIBforeignlanguage}[2]{{%
\expandafter\ifx\csname l@#1\endcsname\relax
\typeout{** WARNING: IEEEtran.bst: No hyphenation pattern has been}%
\typeout{** loaded for the language `#1'. Using the pattern for}%
\typeout{** the default language instead.}%
\else
\language=\csname l@#1\endcsname
\fi
#2}}
\providecommand{\BIBdecl}{\relax}
\BIBdecl

\bibitem{Cardenas}
A.~A. C\'{a}rdenas, S.~Amin, Z.~Lin, Y.~Huang, C.~Huang, and S.~Sastry,
  ``Attacks against process control systems: Risk assessment, detection, and
  response,'' in \emph{Proc. 6th ACM Symposium on Information, Computer and
  Communications Security}, Hong Kong, Mar. 2011, pp. 355--366.

\bibitem{CardenasOld}
A.~A. C\'{a}rdenas, S.~Amin, and S.~Sastry, ``Research challenges for the
  security of control systems,'' in \emph{Proceedings of the 3rd Conference on
  Hot Topics in Security}, San Jos\'{e}, CA, Jul. 2008, pp. 1--6.

\bibitem{CarAttack}
K.~Koscher, A.~Czeskis, F.~Roesner, S.~Patel, T.~Kohno, S.~Checkoway, D.~McCoy,
  B.~Kantor, D.~Anderson, H.~Shacham, and S.~Savage, ``Experimental security
  analysis of a modern automobile,'' in \emph{Proceedings of the 2010 {IEEE}
  Symposium on Security and Privacy}, Oakland, CA, May 2010, pp. 447--462.

\bibitem{CarAttack2}
S.~Checkoway, D.~McCoy, B.~Kantor, D.~Anderson, H.~Shacham, S.~Savage,
  K.~Koscher, A.~Czeskis, F.~Roesner, and T.~Kohno, ``Comprehensive
  experimental analyses of automotive attack surfaces,'' in \emph{Proceedings
  of the 2011 USENIX Security Symposium}, San Francisco, CA, Aug. 2011, pp.
  1--14.

\bibitem{Mo}
Y.~Mo and B.~Sinopoli, ``Integrity attacks on cyber-physical systems,'' in
  \emph{Proceedings of the 1st ACM International Conference on High Confidence
  Networked Systems}, Beijing, China, Apr. 2012, pp. 47--54.

\bibitem{TeixeiraModels}
A.~Teixeira, D.~P\'{e}rez, H.~Sandberg, and K.~H. Johansson, ``Attack models
  and scenarios for networked control systems,'' in \emph{Proc. 1st ACM
  International Conference on High Confidence Networked Systems}, Beijing,
  China, Apr. 2012, pp. 55--64.

\bibitem{Willsky}
A.~S. Willsky, ``A survey of design methods for failure detection in dynamic
  systems,'' \emph{Automatica}, vol.~12, no.~6, pp. 601--611, Nov. 1976.

\bibitem{Speyer}
J.~L. Speyer and W.~H. Chung, \emph{Stochastic Processes, Estimation, and
  Control}.\hskip 1em plus 0.5em minus 0.4em\relax SIAM, 2008, ch.~9.

\bibitem{ChenACC}
Y.~Chen, S.~Kar, and J.~M.~F. Moura, ``Cyber-physical attacks constrained by
  control objectives,'' in \emph{Proceedings of the 2016 American Control
  Conference}, Boston, MA, Jul. 2016.

\bibitem{Liu}
Y.~Liu, M.~K. Reiter, and P.~Ning, ``False data injection attacks against power
  systems in electric power grids,'' in \emph{Proceedings of the 16th ACM
  Conference on Computer and Communications Security}, Chicago, IL, Nov. 2009,
  pp. 21--32.

\bibitem{Pasqualetti}
F.~Pasqualetti, F.~Dorfler, and F.~Bullo, ``Attack detection and identification
  in cyber-physical systems,'' \emph{IEEE Transactions on Automatic Control},
  vol.~58, no.~11, pp. 2715--2729, Nov. 2013.

\bibitem{TeixeiraReveal}
A.~Teixeira, I.~Shames, H.~Sandberg, and K.~H. Johansson, ``Revealing stealthy
  attacks in control systems,'' in \emph{Proceedings of the 50th Annual
  Allerton Conference}, Monticello, IL, Oct. 2012, pp. 1806--1813.

\bibitem{ChenICASSP}
Y.~Chen, S.~Kar, and J.~M.~F. Moura, ``{C}yber-{P}hysical {S}ystems: Dynamic
  sensor attacks and strong observability,'' in \emph{Proceedings of the 40th
  IEEE International Conference on Acoustics, Speech and Signal Processing},
  Brisbane, Australia, Apr. 2015, pp. 1752--1756.

\bibitem{ChenSideInfo}
------, ``Dynamic attack detection in {C}yber-{P}hysical {S}ystems with side
  initial state information,'' \emph{ArXiv e-prints}, Mar. 2015.

\bibitem{WeerakkodyTarget}
S.~Weerakkody and B.~Sinopoli, ``Detecting integrity attacks on control systems
  using a moving target approach,'' in \emph{Proceedings of the 54th IEEE
  Conference on Decision and Control}, Osaka, Japan, Dec. 2015, pp. 5820--5826.

\bibitem{MoWorkshop}
Y.~Mo and B.~Sinopoli, ``False data injection attacks in control systems,'' in
  \emph{Proceedings of the 1st Workshop on Secure Control Systems}, Stockholm,
  Sweden, Apr. 2010, pp. 56--62.

\bibitem{MoWSN}
Y.~Mo, E.~Garone, A.~Casavola, and B.~Sinopoli, ``False data injection attacks
  against state estimation in wireless sensor networks,'' in \emph{Proceedings
  of the 49th IEEE Conference on Decision and Control}, Atlanta, GA, Dec. 2010,
  pp. 5967--5972.

\bibitem{MoScada}
Y.~Mo, R.~Chabukswar, and B.~Sinopoli, ``Detecting integrity attacks on {SCADA}
  systems,'' \emph{IEEE Transactions on Control Systems Technology}, vol.~22,
  no.~4, pp. 1396--1407, Jul. 2014.

\bibitem{Weerakkody}
S.~Weerakkody, Y.~Mo, and B.~Sinopoli, ``Detecting integrity attacks on control
  systems using robust physical watermarking,'' in \emph{Proceedings of the
  53rd IEEE Conference on Decision and Control}, Los Angeles, CA, Dec. 2014,
  pp. 3757--3764.

\bibitem{MoAuthentication}
Y.~Mo, S.~Weerakkody, and B.~Sinopoli, ``Physical authentication of control
  systems: Designing watermarked control inputs to detect counterfeit sensor
  outputs,'' \emph{IEEE Control Systems Magazine}, vol.~35, no.~1, pp. 99--109,
  Feb. 2015.

\bibitem{MoReplay}
Y.~Mo and B.~Sinopoli, ``Secure control against replay attacks,'' in
  \emph{Proceedings of the 47th Annual Allerton Conference}, Monticello, IL,
  Sep. 2009, pp. 911--918.

\bibitem{Boyd}
S.~Boyd, \emph{Convex Optimization}.\hskip 1em plus 0.5em minus 0.4em\relax
  Cambridge University Press, 2004, ch.~9.

\bibitem{Helicopter}
H.~R. Dharmayanda, A.~Budiyono, and T.~Kang, ``State space identification and
  implementation of ${H}_\infty$ control design for small-scale helicopter,''
  \emph{Aircraft Engineering and Aerospace Technology}, vol.~82, no.~6, pp.
  340--352, 2010.

\end{thebibliography}
\end{document}